\newenvironment{breakablealgorithm}
  {
   \begin{center}
     \refstepcounter{algorithm}
     \hrule height.8pt depth0pt \kern2pt
     \renewcommand{\caption}[2][\relax]{
       {\raggedright\textbf{\ALG@name~\thealgorithm} ##2\par}%
       \ifx\relax##1\relax 
         \addcontentsline{loa}{algorithm}{\protect\numberline{\thealgorithm}##2}%
       \else 
         \addcontentsline{loa}{algorithm}{\protect\numberline{\thealgorithm}##1}%
       \fi
       \kern2pt\hrule\kern2pt
     }
  }{
     \kern2pt\hrule\relax
   \end{center}
  }
\newtheorem{theorem}{Theorem}[section]
\newtheorem{lemma}{Lemma}[section]
\newtheorem{proposition}{Proposition}[section]
\theoremstyle{definition}
\newtheorem{definition}{Definition}[section]
\newtheorem{assumption}{Assumption}[section]
\newtheorem{corollary}[theorem]{Corollary}
\theoremstyle{remark}
\newtheorem{remark}[theorem]{Remark}
\definecolor{myblue}{rgb}{0,0,.5}
\numberwithin{equation}{section}
\begin{document}

\title[Generalized Jacobi Eigenvalue Algorithm]{Generalized Jacobi Method for Computing Eigenvalues of Dual Quaternion Hermitian Matrices}


\author{Yongjun Chen}
\address{Department of Mathematical Sciences, Tsinghua University, Beijing, 100084, China}
\curraddr{}
\email{chen-yj19@mails.tsinghua.edu.cn}
\thanks{}

\author{Liping Zhang}
\address{Department of Mathematical Sciences, Tsinghua University, Beijing, 100084, China}
\curraddr{}
\email{lipingzhang@mail.tsinghua.edu.cn}
\thanks{Corresponding author: Liping Zhang ({\it E-mail: lipingzhang@tsinghua.edu.cn})}
\thanks{This work was supported by the National Nature Science
Foundation of China (Grant No. 12171271).}

\subjclass[2010]{Primary 15A18, 65F15; Secondary 65F10}

\date{}

\dedicatory{}

\begin{abstract}
Dual quaternion matrices have various applications in robotic research and its spectral theory has been extensively studied in recent years. In this paper, we extend Jacobi method to compute all eigenpairs of dual quaternion Hermitian matrices and establish its convergence. The improved version with elimination strategy is proposed to reduce the computational time. Especially, we present a novel three-step Jacobi method to compute such eigenvalues which have identical standard parts but different dual parts. We prove that the proposed three-step Jacobi method terminates after at most finite iterations and can provide $\epsilon$-approximation of eigenvalue. To the best of our knowledge, both the power method and the Rayleigh quotient iteration method can not handle such eigenvalue problem in this scenario. Numerical experiments illustrate the proposed Jacobi-type algorithms are effective and stable, and also outperform the power method and the Rayleigh quotient iteration method.
\end{abstract}

\maketitle

\section{Introduction}
\label{intro}
Dual quaternion numbers and dual quaternion matrices have various applications in robotic research, including the hand-eye calibration problem \cite{qc7,A6,A10}, the simultaneous localization and mapping (SLAM) problem \cite{qc1,qc2,qc3,qc6,qc15,qc16,A7}, and multi-agent formation control \cite{qc13}. The spectral theory of dual quaternion matrices has been explored in \cite{qc11,A2}, from which we know that when a dual number serves as a right eigenvalue of a square dual quaternion matrix, it also functions as a left eigenvalue of that same matrix, and is consequently referred to as a dual number eigenvalue. Notably, it has been demonstrated that an $n \times n$ dual quaternion Hermitian matrix possesses exactly $n$ eigenvalues for dual numbers.
Subsequently, paper \cite{qc10} delved into the minimax principle pertaining to the eigenvalues of dual quaternion Hermitian matrices.  Hoffman-Wielandt type inequality was established \cite{qc9} for dual quaternion Hermitian matrices by employing von Neumann type trace inequality. Additionally, it was highlighted in \cite{qc13} the importance of the eigenvalue theory of dual quaternion Hermitian matrices in studying the stability of multi-agent formation control systems.

In recent research by Qi and Cui \cite{qc4}, a power method was proposed to compute the eigenvalues of dual quaternion Hermitian matrices. An eigenvalue $\hat{\lambda}$ of a dual quaternion Hermitian matrix is referred to as a strictly dominant eigenvalue with multiplicity $k$ if the matrix possesses $k$ identical eigenvalues $\hat{\lambda}$, and the standard part of $\hat{\lambda}$ exceeds the standard part of the remaining $n-k$ eigenvalues. The linear convergence of the power method was demonstrated when computing the strict dominant eigenvalue and its corresponding eigenvector. However, it was observed that the power method fails to converge when a dual quaternion Hermitian matrix has two eigenvalues with identical standard parts but different dual parts. Moreover, when the dimension of the dual quaternion Hermitian matrix is increasing, the computational efficiency is significantly affected. Subsequently, the Rayleigh quotient iteration method was proposed \cite{qc17} for computing the extreme eigenvalues of dual quaternion Hermitian matrices. An eigenvalue $\hat{\lambda}$ of a dual quaternion Hermitian matrix is referred to as an extreme eigenvalue if the magnitude of its standard part is strictly greater or smaller than that of all other eigenvalues. It was proven that the Rayleigh quotient iteration method exhibits locally cubic convergence, which is substantially faster than the power method. The eigenvalue to which the Rayleigh quotient iteration method converges depends on the initial iteration point. Therefore, selecting an appropriate initial point is important. Additionally, at each iteration, the Rayleigh quotient iteration method requires solving a linear equation. This can be time-consuming for large dimensions. Similar to the power method, the Rayleigh quotient iteration method fails to converge when a dual quaternion Hermitian matrix has two eigenvalues with identical standard parts but different dual parts.

Motivated by these challenges, our objective is to develop an efficient and stable method for calculating all the eigenvalues of dual quaternion Hermitian matrices. The Jacobi eigenvalue algorithm is an old but stable iterative method to find the eigenvalues and eigenvectors of a real-symmetric matrix and is suitable for parallel calculations. In this paper, we extend the applicability of the Jacobi eigenvalue algorithm to the realm of dual quaternion Hermitian matrices. The Jacobi eigenvalue algorithm exhibits favorable numerical performance in terms of accuracy and stability, making it a suitable candidate for such a generalization. The key way of our extension is to  derive a specific formula for a dual quaternion unitary matrix, which can be used to diagonalize a $2\times 2$ dual quaternion Hermitian matrix. Subsequently, based on this formula, we extend the Jacobi eigenvalue algorithm to cater to the characteristics of dual quaternion Hermitian matrices. Very recently,  Jacobi method was also extended by Ding et al. \cite{qc123} to compute eigenvalues of dual quaternion Hermitian matrices,  but its strategy to diagonalize a $2\times 2$ dual quaternion Hermitian matrices differs from our Givens matrix defined in Definition \ref{J}, and the acceleration method employed to avoid searching for maximum element also varies. In addition, there was without convergence analysis in \cite{qc123}.

The structure of this paper is organized as follows. Section \ref{Preliminaries} provides an overview of the fundamental definitions and results pertaining to dual quaternions and dual quaternion matrices. In Section \ref{Jacobi_Method}, we delve into the diagonalization of a $2 \times 2$ dual quaternion Hermitian matrices and propose three Jacobi-type algorithms for computing the eigenvalues and eigenvectors of dual quaternion Hermitian matrices. Moreover, convergence analyses for these algorithms are also established. Section \ref{Experiments} presents numerical results to show the efficiency and accuracy of the generalized Jacobi-type eigenvalue algorithms. Some final remarks are drawn in Section \ref{summery}.

\section{Preliminaries}\label{Preliminaries}

In this section, we review some preliminary knowledge of dual numbers, quaternion, dual quaternion, dual quaternion matrices and eigenvalues of dual quaternion matrices.

\subsection{Dual quaternions}

Let $\mathbb{R}$, $\mathbb{D}$, $\mathbb{DC}$, $\mathbb{Q}$, $\mathbb{U}$, $\hat{\mathbb{Q}}$, and $\hat{\mathbb{U}}$  respectively denote the sets of real numbers, dual numbers, dual complex numbers, quaternion, unit quaternion, dual quaternion, and unit dual quaternion.

Let the symbol $\varepsilon$ denote the infinitesimal unit which satisfies $\varepsilon\neq 0,\varepsilon^{2}=0$. $\varepsilon$ is commutative with complex numbers and quaternions.

\begin{definition}\label{def1}
A {\bf dual complex number} $\hat{a}=a_{st}+a_{\mathcal I}\varepsilon\in \mathbb{DC}$ has standard part
$a_{st}\in \mathbb{C}$ and dual part $a_{\mathcal I}\in \mathbb{C}$. We say that $\hat{a}$ is {\bf appreciable} if $a_{st}\neq 0$. If $a_{st},a_{\mathcal I}\in \mathbb{R}$, then $\hat{a}$ is called a {\bf dual number}.
\end{definition}

The following definition lists some operators about dual complex numbers, which can be found in \cite{qc12}.

\begin{definition}\label{def2}
Let $\hat{a}=a_{st}+a_{\mathcal I}\varepsilon$ and $\hat{b}=b_{st}+b_{\mathcal I}\varepsilon$ be any two {\bf dual complex numbers}. The conjugate,  absolute value of $\hat{a}$, and the addition, multiplication, division between $\hat{a}$ and $\hat{b}$ are defined as follows.
\begin{itemize}
\item[{\rm (i)}] The {\bf conjugate} of $\hat{a}$ is
$\hat{a}^{\ast }=a_{st}^{\ast}+a_{\mathcal I}^{\ast }\varepsilon,$ where $a_{st}^{\ast}$ and $a_{\mathcal I}^{\ast}$ are conjugates of the complex numbers $a_{st}$ and $a_{\mathcal I}$, respectively.

\item[{\rm (ii)}] The {\bf addition} and {\bf multiplication} of $\hat{a}$ and $\hat{b}$ are
$$ \hat{a}+\hat{b}=\hat{b}+\hat{a}=\left (a_{st}+b_{st} \right )+\left (a_{\mathcal I}+b_{\mathcal I} \right )\varepsilon,$$
and
$$\hat{a}\hat{b}=\hat{b}\hat{a}=a_{st}b_{st} +\left (a_{st}b_{\mathcal I}+a_{\mathcal I}b_{st} \right )\varepsilon.$$
Clearly, the addition and multiplication operators of dual complex numbers are all commutative.

\item[{\rm (iii)}] When $b_{st}\ne 0$ or $a_{st}=b_{st}=0$, we can define the division operation of dual numbers as
$$\frac{\hat{a}}{\hat{b}}=\begin{cases}
\dfrac{ a_{st}}{b_{st}}+\left (\dfrac{ a_{\mathcal I}}{b_{st}}-\dfrac{ a_{st}b_{\mathcal I}}{b_{st}b_{st}} \right )\varepsilon, & {\rm if} \quad b_{st}\neq 0, \\
\dfrac{ a_{\mathcal I}}{b_{\mathcal I}}+c\varepsilon, & {\rm if} \quad a_{st}=b_{st}=0,
\end{cases}
$$
where $c$ is an arbitrary complex number.

\item[{\rm (iv)}] The {\bf absolute value}  of $\hat{a}$ is
$$\left | \hat{a}\right |=\begin{cases}
\left | a_{st}\right |+{\rm sgn}( a_{st}) a_{\mathcal I}\varepsilon, & {\rm if} \quad a_{st}\neq 0, \\
\left | a_{\mathcal I} \right |\varepsilon, & \text{\rm otherwise.}
\end{cases}$$

\end{itemize}
\end{definition}

The order operation for dual numbers was defined in \cite{qc4} and the limit operation to dual number sequences was defined in \cite{qc12}.

\begin{definition}\label{def3}
Let $\hat{a}=a_{st}+a_{\mathcal I}\varepsilon$, $\hat{b}=b_{st}+b_{\mathcal I}\varepsilon$ be any two {\bf dual numbers} and $\left \{
\hat{a}_{k}=a_{k,st}+a_{k,\mathcal I}\varepsilon : k=1,2,\cdots  \right \}$ be a dual number sequence. We give the order operation between $\hat{a}$ and $\hat{b}$, and extend limit operation to dual number sequences.

\begin{itemize}
\item[{\rm (i)}] We say that $\hat{a} > \hat{b}$ if
$$\text{$a_{st} > b_{st}$ ~~~{\rm or}~~~ $a_{st}=b_{st}$ {\rm and} $a_{\mathcal I} > b_{\mathcal I}$}.$$

\item[{\rm (ii)}] We say that $\left \{ \hat{a}_{k}=a_{k,st}+a_{k,\mathcal I}\varepsilon \right \}$ converges to a limit $\hat{a}=a_{st}+a_{\mathcal I}\varepsilon$ if
$$\underset{k\rightarrow \infty }{\lim} a_{k,st}=a_{st}\quad {\rm and}\quad \underset{k\rightarrow \infty }{\lim} a_{k,\mathcal I}=a_{\mathcal I}.$$

\end{itemize}
\end{definition}

\begin{definition}\label{def4}
A {\bf quaternion} $\tilde{q}=\left [q_{0}, q_{1},q_{2},q_{3}\right ]$ is a real four-dimensional vector. We can rewrite $\tilde{q}=[ q_{0}, \vec{q} ]$, where $\vec{q}$ is a real three-dimensional vector.
\end{definition}

The following definition lists some operators about quaternions; see e.g., \cite{qc5,qc7,qc14}.

\begin{definition}\label{def5}
Let $\tilde{p}=\left [p_{0}, \vec{p}\right ]$ and $\tilde{q}=\left [q_{0}, \vec{q}\right ]$  be any two {\bf quaternions}. The conjugate, magnitude, and inverse of $\tilde{p}$, and the addition, multiplication between $\tilde{q}$ and $\tilde{p}$ are defined as follows.
\begin{itemize}

\item[{\rm (i)}] The {\bf conjugate} of $\tilde{p}$ is $\tilde{p}^{\ast }=\left [p_{0}, -\vec{p}\right ]$.
\item[{\rm (ii)}] The {\bf magnitude} of $\tilde{p}=\left [p_{0}, p_{1},p_{2},p_{3}\right ]$ is defined by $$\left |\tilde{p} \right |=\sqrt{p_{0}^{2}+p_{1}^{2}+p_{2}^{2}+p_{3}^{2}}.$$
\item[{\rm (iii)}] The {\bf addition} and {\bf multiplication} of $\tilde{q}$ and $\tilde{p}$ are
$$\tilde{p}+\tilde{q}=\tilde{q}+\tilde{p}=\left [p_{0}+q_{0}, \vec{p}+\vec{q}\right ]$$
and
$$\tilde{p}\tilde{q}=\left [p_{0}q_{0}-\vec{p}\cdot \vec{q}, p_{0}\vec{q}+q_{0}\vec{p}+\vec{p}\times \vec{q}\right ].$$
Clearly, in general $\tilde{p}\tilde{q}\neq \tilde{q}\tilde{p}$, and $\tilde{p}\tilde{q}=\tilde{q}\tilde{p}$ if and only if $\vec{p}\times\vec{q}=\vec{0}$, i.e., either $\vec{p}=\vec{0}$ or $\vec{q}=\vec{0}$ or $\vec{p}=c\vec{q}$ for some real number $c$.
\item[{\rm (iv)}] Let $\tilde{1}=\left [1, 0,0,0\right ]\in \mathbb{Q}$ be the identity element of $\mathbb{Q}$. If $\tilde{q}\tilde{p}=\tilde{p}\tilde{q}=\tilde{1}$, then $\tilde{p}$ is invertible and $\tilde{p}^{-1}=\tilde{q}$ is called the inverse of $\tilde{p}$.
\end{itemize}
\end{definition}

\begin{definition}\label{def6}
We say that $\tilde{p}\in \mathbb{Q}$ is a {\bf unit quaternion} if $\left |\tilde{p} \right |=1$.
Clearly, if $\tilde{p}$ and $\tilde{q}$ are unit quaternions, i.e., $\tilde{p},\tilde{q}\in \mathbb{U}$, then $\tilde{p}\tilde{q}\in \mathbb{U}$. Furthermore, we have $\tilde{p}^{\ast }\tilde{p}=\tilde{p}\tilde{p}^{\ast }=\tilde{1}$, i.e., $\tilde{p}$ is invertible and $\tilde{p}^{-1}=\tilde{p}^{\ast }$.
\end{definition}

\begin{definition}\label{def7}
A {\bf dual quaternion} $\hat{p}=\tilde{p}_{st}+\tilde{p}_{\mathcal I}\varepsilon\in \hat{\mathbb{Q}}$ has standard part
$\tilde{p}_{st}\in \mathbb{Q}$ and dual part $\tilde{p}_{\mathcal I}\in \mathbb{Q}$. Let $\tilde{0}=[0,0,0,0]\in \mathbb Q$ be the zero element of $\mathbb Q$. We say that $\hat{p}$ is {\bf appreciable} if $\tilde{p}_{st}\neq \tilde{0}$.
\end{definition}

The following definition lists some operators about dual quaternions.

\begin{definition}\label{def8}
Let $\hat{p}=\tilde{p}_{st}+\tilde{p}_{\mathcal I}\varepsilon$ and $\hat{q}=\tilde{q}_{st}+\tilde{q}_{\mathcal I}\varepsilon$ be any two {\bf dual quaternions}. The conjugate, magnitude, inverse of $\hat{p}$, and the addition, multiplication, division between $\hat{p}$ and $\hat{q}$ are defined as follows.
\begin{itemize}
\item[{\rm (i)}] The {\bf conjugate} of $\hat{p}$ is
$\hat{p}^{\ast }=\tilde{p}_{st}^{\ast}+\tilde{p}_{\mathcal I}^{\ast }\varepsilon,$ where $\tilde{p}_{st}^{\ast}$ and $\tilde{p}_{\mathcal I}^{\ast}$ are conjugates of $\tilde{p}_{st},\tilde{p}_{\mathcal I}$ respectively.

\item[{\rm (ii)}] The {\bf addition} and {\bf multiplication} of $\hat{p}$ and $\hat{q}$ are
$$ \hat{p}+\hat{q}=\hat{q}+\hat{p}=\left (\tilde{p}_{st}+\tilde{q}_{st} \right )+\left (\tilde{p}_{\mathcal I}+\tilde{q}_{\mathcal I} \right )\varepsilon,$$
and
$$\hat{p}\hat{q}=\tilde{p}_{st}\tilde{q}_{st} +\left (\tilde{p}_{st}\tilde{q}_{\mathcal I}+\tilde{p}_{\mathcal I}\tilde{q}_{st} \right )\varepsilon.$$
Clearly, the addition operator of dual quaternions is commutative, while the multiplication
 operator is not in general.
\item[{\rm (iii)}] Let $\hat{1}=\tilde{1}+\tilde{0}\varepsilon\in \hat{\mathbb Q}$ be the identity element of $\hat{\mathbb Q}$. If $\hat{q}\hat{p}=\hat{p}\hat{q}=\hat{1}$, then $\hat{p}$ is invertible and $\hat{p}^{-1}=\hat{q}$ is called the inverse of $\hat{p}$.

\item[{\rm (iv)}] The {\bf magnitude} \cite{qc12} of $\hat{p}$ is
$$\left | \hat{p}\right |=\begin{cases}
\left | \tilde{p}_{st}\right |+\frac{{\rm sc}\left (\tilde{p}_{st}^{\ast }\tilde{p}_{\mathcal I} \right )}{\left |  \tilde{p}_{st} \right |}\varepsilon, & {\rm if} \quad \tilde{p}_{st}\neq 0, \\
\left | \tilde{p}_{\mathcal I} \right |\varepsilon, & \text{\rm otherwise,}
\end{cases}$$
where ${\rm sc}\left ( \tilde{p}\right )=\dfrac{1}{2}\left (\tilde{p}+\tilde{p}^{\ast } \right )$ is the scalar part of $\tilde{p}$.

\item[{\rm (v)}] When $\tilde{q}_{st}\ne \tilde{0}$ or $\tilde{q}_{st}=\tilde{q}_{st}=0$, we can define the division operation of dual numbers as
$$\frac{\hat{p}}{\hat{q}}=\begin{cases}
\dfrac{ \tilde{p}_{st}}{\tilde{q}_{st}}+\left (\dfrac{ \tilde{p}_{\mathcal I}}{\tilde{q}_{st}}-\dfrac{ \tilde{p}_{st}\tilde{q}_{\mathcal I}}{\tilde{q}_{st}\tilde{q}_{st}} \right )\varepsilon, & {\rm if} \quad \tilde{q}_{st}\neq \tilde{0}, \\
\dfrac{ \tilde{p}_{\mathcal I}}{\tilde{q}_{\mathcal I}}+\tilde{c}\varepsilon, & {\rm if} \quad \tilde{p}_{st}=\tilde{q}_{st}=0,
\end{cases}
$$
where $\tilde{c}$ is an arbitrary quaternion number.
\end{itemize}
\end{definition}

\begin{definition}\label{def6.1}
We say that $\hat{p}\in \hat{\mathbb{Q}}$ is a {\bf unit dual quaternion} if $\left |\hat{p} \right |=1$. Clearly, if $\hat{p}$ and $\hat{q}$ are unit dual quaternions, i.e., $\hat{p},\hat{q}\in \hat{\mathbb{U}}$, then $\hat{p}\hat{q}\in \hat{\mathbb{U}}$. Furthermore, we have $\hat{p}^{\ast }\hat{p}=\hat{p}\hat{p}^{\ast }=\hat{1}$, i.e., $\hat{p}$ is invertible and $\hat{p}^{-1}=\hat{p}^{\ast }$.
\end{definition}

\subsection{Dual quaternion matrices}

The set of $n\times m$ real, quaternion, and dual quaternion matrix are denoted as $\mathbb{R}^{n\times m}$, $\mathbb{Q}^{n\times m}$, and $\hat{\mathbb{Q}}^{n\times m}$, respectively.
Let $\tilde{\mathbf{O}}^{n\times m}$ and $\hat{\mathbf{O}}^{n\times m}$ denote the $n\times m$ zero quaternion and zero dual quaternion matrices, respectively. Let $\tilde{\mathbf{I}}_n$ and $\hat{\mathbf{I}}_n$ denote the identity quaternion and identity dual quaternion matrices with dimension $n$, respectively. The set of unitary quaternion matrix and unitary dual quaternion matrix with dimension $n$ is denoted as $\mathbb{U}^{n}_2$ and $\hat{\mathbb{U}}^{n}_2$, respectively.
A dual quaternion matrix $\hat{\mathbf Q}=\tilde{\mathbf Q}_{st} +\tilde{\mathbf Q}_{\mathcal I}\varepsilon \in \hat{\mathbb{Q}}^{n\times m}$ is called appreciable, if $\tilde{\mathbf Q}_{st}\neq \tilde{\mathbf O }$. A quaternion matrix $\tilde{\mathbf{U}}\in \mathbb{Q}^{n\times n}$ is a unitary matrix if $\tilde{\mathbf{U}}^\ast\tilde{\mathbf{U}}=\tilde{\mathbf{U}}\tilde{\mathbf{U}}^\ast=\tilde{\mathbf{I}}_{n}$.
A dual quaternion matrix $\hat{\mathbf{U}}\in \hat{\mathbb{Q}}^{n\times n}$ is a unitary matrix if $\hat{\mathbf{U}}^\ast\hat{\mathbf{U}}=\hat{\mathbf{U}}\hat{\mathbf{U}}^\ast=\hat{\mathbf{I}}_{n}$.


The following definition gives the magnitude of  quaternion vectors, $F$-norm and conjugate
transpose of quaternion matrices.
\begin{definition}
Let $\tilde{\mathbf{x}}=\left ( \tilde{x}_{i} \right ) \in \mathbb{Q}^{n\times 1}$ and $\tilde{\mathbf{Q}}=\left ( \tilde{q}_{ij} \right ) \in \mathbb{Q}^{n\times m}$.
\begin{itemize}
\item[{\rm (i)}] The {\bf magnitude} of $\tilde{\mathbf{x}}$ is $$\| \tilde{\mathbf{x}}\|=\sqrt{\sum_{i=1}^{n}|\tilde{x}_{i}|^2}.$$
\item[{\rm (ii)}]
The $F$-norm of $\tilde{\mathbf{Q}}$ is $$\| \tilde{\mathbf{Q}} \|_F=\sqrt{\sum_{ij}^{} |\tilde{q}_{ij}|^2}.$$
\item[{\rm (iii)}]
The {\bf conjugate transpose} of $\tilde{\mathbf{Q}}$ is
$$\tilde{\mathbf{Q}}^{\ast}=( \tilde{q}_{ji}^{\ast }).$$
If $\tilde{\mathbf{Q}}^{\ast}=\tilde{\mathbf{Q}}$, then $\tilde{\mathbf{Q}}$ is called a quaternion Hermitian matrix. The set of quaternion Hermitian matrix with dimension $n$ is denoted as $\mathbb{H}^{n}$.
\end{itemize}
\end{definition}

The following definition lists some operators about dual quaternion matrices.

\begin{definition}\label{def9}
Let $\hat{\mathbf{Q}}=\left ( \hat{q}_{ij} \right ) \in \hat{\mathbb{Q}}^{n\times m}$. The {\bf transpose} and the {\bf conjugate transpose} of $\hat{\mathbf{Q}}$ is defined by
$$\hat{\mathbf{Q}}^{T}=\left( \hat{q}_{ji} \right )\quad {\rm and}\quad \hat{\mathbf{Q}}^{\ast}=\left( \hat{q}_{ji}^{\ast } \right ).$$
If $\hat{\mathbf{Q}}^{\ast}=\hat{\mathbf{Q}}$, then $\hat{\mathbf{Q}}$ is called a dual quaternion Hermitian matrix. The set of dual quaternion Hermitian matrix with dimension $n$ is denoted as $\hat{\mathbb{H}}^{n}$.
\end{definition}

The following definition gives $2$-norm, $2^{R}$-norm for dual quaternion vectors and $F$-norm, $F^{R}$-norm for dual quaternion matrices. See \cite{qc4,qc10,qc12}.

\begin{definition}\label{def10}
Let $\hat{\mathbf{x}}=\left ( \hat{x}_{i} \right ) \in \hat{\mathbb{Q}}^{n\times 1}$ and $\hat{\mathbf{Q}}=\left ( \hat{q}_{ij} \right ) \in \hat{\mathbb{Q}}^{n\times m}$.

The $2$-norm and $2^{R}$-norm for dual quaternion vectors are respectively defined by
\begin{equation}\label{2norm}
\left \| \hat{\mathbf x }\right \|_{2}=\begin{cases}
\sqrt{\sum_{i=1}^{n}\left |\hat{x}_{i} \right |^{2}}, & {\rm if} \quad \tilde{\mathbf x }_{st}\neq \tilde{\mathbf O }, \\
\|\tilde{\mathbf{x}}_{\mathcal{I}}\|\varepsilon, & {\rm if} \quad \tilde{\mathbf x }_{st}=\tilde{\mathbf O },
\end{cases}
\end{equation}
and
\begin{equation}\label{2rnorm}
\left \| \hat{\mathbf x }\right \|_{2^{R}}=\sqrt{\left \| \tilde{\mathbf x}_{st} \right \|^{2}+\left \| \tilde{\mathbf x }_{\mathcal I}\right \|^{2}}.
\end{equation}
The set of $n\times 1$ dual quaternion vectors with unit $2$-norm is denoted as $\hat{\mathbb{Q}}_2^{n\times 1}$.

The $F$-norm and $F^{R}$-norm for dual quaternion matrices are defined by
\begin{equation}\label{fnorm}
\| \hat{\mathbf Q } \|_{F}=\begin{cases}
 \| \tilde{\mathbf Q }_{st} \|_{F}+\frac{{\rm sc}\left ({\rm tr}\left (\tilde{\mathbf Q}_{st}^{\ast }\tilde{\mathbf Q}_{\mathcal I} \right ) \right )}{\|  \tilde{\mathbf Q}_{st} \|_{F}}\varepsilon, & {\rm if} \quad \tilde{\mathbf Q}_{st}\neq \tilde{\mathbf O }, \\
\| \tilde{\mathbf Q }_{\mathcal I} \|_{F}\varepsilon, & {\rm if} \quad \tilde{\mathbf Q }_{st}=\tilde{\mathbf O },
\end{cases}
\end{equation}
and
\begin{equation}\label{frnorm}
 \| \hat{\mathbf Q } \|_{F^{R}}=\sqrt{ \| \tilde{\mathbf Q }_{st} \|_{F}^{2}+ \| \tilde{\mathbf Q }_{\mathcal I} \|_{F}^{2}},
\end{equation}
respectively.
\end{definition}

The following definition introduces eigenvalues and eigenvectors of dual quaternion matrices \cite{qc11}.
\begin{definition}\label{def12}
Suppose that $\hat{\mathbf Q}\in \hat{\mathbb{Q}}^{n\times n}$.

If there exist $\hat{\lambda} \in \hat{\mathbb{Q}}$ and $\hat{\mathbf x}\in \hat{\mathbb{Q}}^{n\times 1}$, where $\hat{\mathbf x}$ is appreciable, such that
\begin{equation}
    \hat{\mathbf Q}\hat{\mathbf x}=\hat{\mathbf x}\hat{\lambda},
\end{equation}
then we call $\hat{\lambda}$ is a {\bf right eigenvalue} of $\hat{\mathbf Q}$ with $\hat{\mathbf x}$ as an associated {\bf right eigenvector}.

If there exist $\hat{\lambda} \in \hat{\mathbb{Q}}$ and $\hat{\mathbf x}\in \hat{\mathbb{Q}}^{n\times 1}$, where $\hat{\mathbf x}$ is appreciable, such that
\begin{equation}
    \hat{\mathbf Q}\hat{\mathbf x}=\hat{\lambda}\hat{\mathbf x},
\end{equation}
then we call $\hat{\lambda}$ is a {\bf left eigenvalue} of $\hat{\mathbf Q}$ with $\hat{\mathbf x}$ as an associated {\bf left eigenvector}.

Since a dual number is commutative with a dual quaternion vector, then if $\hat{\lambda}$ is a dual number and a right eigenvalue of $\hat{\mathbb{Q}}$, it is also a left eigenvalue of $\hat{\mathbb{Q}}$.  In this case, we simply call $\hat{\lambda}$ an  {\bf eigenvalue} of $\hat{\mathbb{Q}}$ with $\hat{\mathbf{x}}$ as an associated {\bf eigenvector}.
\end{definition}

An $n\times n$ dual quaternion Hermitian matrix has exactly $n$ eigenvalues, which are all dual numbers. Similar to the case of Hermitian matrix, we have unitary decomposition of a dual quaternion Hermitian matrix $\hat{\mathbf{Q}}$, namely, there exist a unitary dual quaternion matrix $\hat{\mathbf{U}}\in \hat{\mathbb{U}}^{n}_2$ and a diagonal dual number matrix $\hat{\Sigma} \in \mathbb D^{n\times n}$ such that  $\hat{\mathbf{Q}}=\hat{\mathbf{U}}^*\hat{\Sigma}\hat{\mathbf{U}}$ \cite{qc11}.

The following proposition shows the Hoffman-Wielandt type inequality still holds for dual quaternion Hermitian matrices; see e.g., \cite{qc9}.

\begin{proposition}\label{lemma0}
If $ \hat{\mathbf Q}_1,\hat{\mathbf Q}_2\in \hat{\mathbb{H}}^{n}$, then we have
\begin{equation}\label{H-W}
\|\lambda(\hat{\mathbf Q}_1)-\lambda(\hat{\mathbf Q}_2)\|_2\leq\|\hat{\mathbf Q}_1-\hat{\mathbf Q}_2\|_F,
\end{equation}
where $\lambda(\hat{\mathbf Q}_1)=(\lambda_1(\hat{\mathbf Q}_1),\ldots,\lambda_n(\hat{\mathbf Q}_1))^\top$ and $\lambda(\hat{\mathbf Q}_2)=(\lambda_1(\hat{\mathbf Q}_2),\ldots,\lambda_n(\hat{\mathbf Q}_2))^\top$ with $\lambda_1(\hat{\mathbf Q}_1)\geq\lambda_2(\hat{\mathbf Q}_1)\geq\ldots\geq\lambda_n(\hat{\mathbf Q}_1)$  and $\lambda_1(\hat{\mathbf Q}_2)\geq\lambda_2(\hat{\mathbf Q}_2)\geq\ldots\geq\lambda_n(\hat{\mathbf Q}_2)$ being the eigenvalues of $\hat{\mathbf Q}_1$ and $\hat{\mathbf Q}_2$, respectively.
\end{proposition}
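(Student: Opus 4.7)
The plan is to mirror the classical Hoffman--Wielandt argument, combining the unitary spectral decomposition of dual quaternion Hermitian matrices with a von Neumann type trace inequality in the dual quaternion setting. First I would write $\hat{\mathbf Q}_i = \hat{\mathbf U}_i^* \hat{\Sigma}_i \hat{\mathbf U}_i$ for $i=1,2$, with $\hat{\mathbf U}_i \in \hat{\mathbb U}_2^n$ and $\hat{\Sigma}_i \in \mathbb D^{n\times n}$ diagonal, arranging the diagonal entries of $\hat{\Sigma}_i$ to be $\lambda_1(\hat{\mathbf Q}_i) \geq \cdots \geq \lambda_n(\hat{\mathbf Q}_i)$ in the order of Definition \ref{def3}. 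This decomposition is exactly the one recalled just before the proposition statement.

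Next I would identify the squared $F$-norm with the scalar part of the dual quaternion trace, namely $\|\hat{\mathbf M}\|_F^2 = \mathrm{sc}(\mathrm{tr}(\hat{\mathbf M}^* \hat{\mathbf M}))$, which follows from the definition of $\|\cdot\|_F$ in \eqref{fnorm} together with the identity $(a+b\varepsilon)^2 = a^2 + 2ab\varepsilon$. Setting $\hat{\mathbf M} = \hat{\mathbf Q}_1 - \hat{\mathbf Q}_2$ and using Hermiticity together with cyclicity of the scalar trace yields
\begin{equation*}
\|\hat{\mathbf Q}_1 - \hat{\mathbf Q}_2\|_F^2 = \mathrm{sc}(\mathrm{tr}(\hat{\mathbf Q}_1^2)) + \mathrm{sc}(\mathrm{tr}(\hat{\mathbf Q}_2^2)) - 2\,\mathrm{sc}(\mathrm{tr}(\hat{\mathbf Q}_1 \hat{\mathbf Q}_2)).
\end{equation*}
Unitary invariance of $\mathrm{sc}(\mathrm{tr}(\cdot))$ under conjugation by $\hat{\mathbf U}_i$ turns the first two terms into $\mathrm{sc}(\mathrm{tr}(\hat{\mathbf Q}_i^2)) = \sum_k \lambda_k(\hat{\mathbf Q}_i)^2$. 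For the cross term I would invoke the von Neumann type trace inequality for dual quaternion matrices from \cite{qc9} to obtain $\mathrm{sc}(\mathrm{tr}(\hat{\mathbf Q}_1 \hat{\mathbf Q}_2)) \leq \sum_k \lambda_k(\hat{\mathbf Q}_1)\lambda_k(\hat{\mathbf Q}_2)$ with eigenvalues matched in decreasing order. Collecting the three contributions and completing the square produces $\|\hat{\mathbf Q}_1 - \hat{\mathbf Q}_2\|_F^2 \geq \sum_k (\lambda_k(\hat{\mathbf Q}_1) - \lambda_k(\hat{\mathbf Q}_2))^2 = \|\lambda(\hat{\mathbf Q}_1) - \lambda(\hat{\mathbf Q}_2)\|_2^2$ as an inequality between nonnegative dual numbers, and extracting the square root (which preserves the order in Definition \ref{def3} on nonnegative dual numbers, since for such $x,y$ one has $x\geq y$ iff $x^2 \geq y^2$) finishes the proof.

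The principal obstacle is the dual quaternion von Neumann trace inequality itself. In the classical Hermitian case it reduces to a rearrangement argument controlled by a doubly stochastic matrix built from the inner products of eigenvectors; transferring this to the dual quaternion setting requires simultaneously handling the noncommutativity of quaternion multiplication and the dual-number order acting on the infinitesimal component, and this is precisely the technical content of \cite{qc9}. I would therefore invoke it as a black box rather than reprove it. The remaining steps, namely the trace--norm identity, unitary invariance of the scalar trace, and monotonicity of the square root on nonnegative dual numbers, reduce to routine manipulations using only the definitions collected in Section \ref{Preliminaries}.
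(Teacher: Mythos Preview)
Your proposal is correct and matches the approach the paper itself attributes to the result: the paper does not prove this proposition in-text but cites \cite{qc9}, and in the introduction explicitly records that the Hoffman--Wielandt type inequality was established there ``by employing von Neumann type trace inequality,'' which is precisely your strategy. There is nothing further to compare.
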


\section{The Generalized Jacobi Eigenvalue Algorithm}\label{Jacobi_Method}

From \cite[Theorem 4.1]{qc11}, a dual quaternion Hermitian matrix $\hat{\mathbf{Q}}\in \hat{\mathbb{H}}^{n}$ has exactly $n$ eigenvalues and $n$ eigenvectors, in which eigenvalues are dual numbers and  eigenvectors are orthonormal vectors. In this section, in order to compute the $n$ eigenvalues and eigenvectors of $\hat{\mathbf{Q}}\in \hat{\mathbb{H}}^{n}$, we propose three Jacobi-type eigenvalue algorithms by extending the Jacobi eigenvalue algorithm to dual quaternion Hermitian matrices.

We first show that the $F$-norm of a dual quaternion matrix is invariable under unitary transformation.

\begin{theorem}\label{theorem1}
Suppose that $\hat{\mathbf Q}=(\hat{q}_{ij})\in \hat{\mathbb Q}^{n\times n}$ is a dual quaternion matrix and $\hat{\mathbf U},\hat{\mathbf V}\in \hat{\mathbb U}^{n}_2$ are unitary matrices, then
\begin{equation}
     \|  \hat{\mathbf Q}  \|_{F}^{2}= \|  \hat{\mathbf V} \hat{\mathbf Q} \hat{\mathbf U}^{\ast } \|_{F}^{2}.
\end{equation}
\end{theorem}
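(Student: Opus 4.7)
The plan is to rewrite the squared $F$-norm as a scalar-trace expression and then exploit cyclic invariance. Specifically, I will first verify the identity
$$\|\hat{\mathbf{Q}}\|_F^2 \;=\; \mathrm{sc}(\mathrm{tr}(\hat{\mathbf{Q}}^*\hat{\mathbf{Q}}))$$
by expanding with $\hat{\mathbf{Q}} = \tilde{\mathbf{Q}}_{st} + \tilde{\mathbf{Q}}_{\mathcal{I}}\varepsilon$ and comparing with \eqref{fnorm}. The standard coefficient matches as $\|\tilde{\mathbf{Q}}_{st}\|_F^2 = \mathrm{sc}(\mathrm{tr}(\tilde{\mathbf{Q}}_{st}^* \tilde{\mathbf{Q}}_{st}))$, while squaring $\|\hat{\mathbf{Q}}\|_F$ yields $2\,\mathrm{sc}(\mathrm{tr}(\tilde{\mathbf{Q}}_{st}^* \tilde{\mathbf{Q}}_{\mathcal{I}}))$ in the dual coefficient, which agrees with the dual coefficient of $\mathrm{sc}(\mathrm{tr}(\hat{\mathbf{Q}}^*\hat{\mathbf{Q}}))$ after using $\mathrm{sc}(\mathrm{tr}(A^*)) = \mathrm{sc}(\mathrm{tr}(A))$ for any quaternion matrix $A$. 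The degenerate case $\tilde{\mathbf{Q}}_{st} = \tilde{\mathbf{O}}$ is handled simultaneously: both sides then vanish because $\varepsilon^2 = 0$, and on the right-hand side the leading term of $\hat{\mathbf{Q}}^*\hat{\mathbf{Q}}$ is already of order $\varepsilon$.

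The key auxiliary tool will be the scalar-trace cyclic property
$$\mathrm{sc}(\mathrm{tr}(\hat{\mathbf{A}}\hat{\mathbf{B}})) \;=\; \mathrm{sc}(\mathrm{tr}(\hat{\mathbf{B}}\hat{\mathbf{A}}))$$
for conformable dual quaternion matrices. The pointwise quaternion identity $\mathrm{sc}(\tilde{p}\tilde{q}) = \mathrm{sc}(\tilde{q}\tilde{p})$ follows immediately from Definition \ref{def5}(iii), since the scalar part of $\tilde{p}\tilde{q}$ depends only on $p_0 q_0 - \vec{p}\cdot\vec{q}$, which is symmetric in $\tilde{p}$ and $\tilde{q}$; summing over the diagonal of the matrix product and then separating the standard and dual coefficients (which by bilinearity each reduce to a sum of quaternion instances) yields the dual quaternion version. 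I would also check in passing that $(\hat{\mathbf{V}}\hat{\mathbf{Q}}\hat{\mathbf{U}}^*)^* = \hat{\mathbf{U}}\hat{\mathbf{Q}}^*\hat{\mathbf{V}}^*$ by direct expansion with Definition \ref{def8}.

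Combining these ingredients with the unitarity conditions $\hat{\mathbf{V}}^*\hat{\mathbf{V}} = \hat{\mathbf{U}}^*\hat{\mathbf{U}} = \hat{\mathbf{I}}_n$, the chain
$$\|\hat{\mathbf{V}}\hat{\mathbf{Q}}\hat{\mathbf{U}}^*\|_F^2 = \mathrm{sc}(\mathrm{tr}(\hat{\mathbf{U}}\hat{\mathbf{Q}}^*\hat{\mathbf{V}}^*\hat{\mathbf{V}}\hat{\mathbf{Q}}\hat{\mathbf{U}}^*)) = \mathrm{sc}(\mathrm{tr}(\hat{\mathbf{U}}\hat{\mathbf{Q}}^*\hat{\mathbf{Q}}\hat{\mathbf{U}}^*)) = \mathrm{sc}(\mathrm{tr}(\hat{\mathbf{Q}}^*\hat{\mathbf{Q}})) = \|\hat{\mathbf{Q}}\|_F^2$$
closes the argument. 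The only real subtlety, and the main point to watch, is that the full trace is \emph{not} cyclic for dual quaternions because of quaternion noncommutativity; the cyclic step can only be taken after projecting onto the scalar part, so the $\mathrm{sc}(\cdot)$ wrapper must be carried through every intermediate line and only the standard-part identity $\|X\|_F^2 = \mathrm{sc}(\mathrm{tr}(X^*X))$ is used to remove it at the two endpoints of the chain.
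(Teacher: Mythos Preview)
Your proof is correct and takes a genuinely different route from the paper's. The paper also starts from $\|\hat{\mathbf Q}\|_F^2=\mathrm{tr}(\hat{\mathbf Q}^*\hat{\mathbf Q})$ (this trace is already a dual number, so no $\mathrm{sc}$ is needed at the endpoints), but to get around the failure of trace cyclicity over dual quaternions it invokes the unitary diagonalization $\hat{\mathbf P}=\hat{\mathbf Q}^*\hat{\mathbf Q}=\hat{\mathbf W}\hat{\Sigma}\hat{\mathbf W}^*$ from \cite{qc11} and computes $\mathrm{tr}(\hat{\mathbf P})=\sum_i\hat{\lambda}_i\,\mathrm{tr}(\hat{\mathbf w}_i\hat{\mathbf w}_i^*)=\sum_i\hat{\lambda}_i$, with the same calculation giving $\mathrm{tr}(\hat{\mathbf U}\hat{\mathbf P}\hat{\mathbf U}^*)=\sum_i\hat{\lambda}_i$ because $\hat{\mathbf U}\hat{\mathbf W}$ is again unitary. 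You instead isolate the elementary identity $\mathrm{sc}(\mathrm{tr}(\hat{\mathbf A}\hat{\mathbf B}))=\mathrm{sc}(\mathrm{tr}(\hat{\mathbf B}\hat{\mathbf A}))$, reduced componentwise to $\mathrm{sc}(\tilde p\tilde q)=\mathrm{sc}(\tilde q\tilde p)$, and cycle directly. Your argument is more self-contained: it avoids the spectral theorem for dual quaternion Hermitian matrices (a nontrivial structural input) and is purely algebraic; the paper's route is conceptually pleasant in that it identifies both sides with the eigenvalue sum, but logically heavier for this purpose.
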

\begin{proof}
    From Definition \ref{def8} (iii) and (\ref{fnorm}), we have
    \begin{equation*}
        \|  \hat{\mathbf Q}  \|_{F}^{2}=\sum_{i,j=1}^{n} | \hat{q}_{ij}  |^{2}={\rm tr} (  \hat{\mathbf Q}^{\ast }  \hat{\mathbf Q}  ),
    \end{equation*}
    and
    \begin{equation*}
       \|  \hat{\mathbf V} \hat{\mathbf Q} \hat{\mathbf U}^{\ast } \|_{F}^{2}={\rm tr} ( \hat{\mathbf U} \hat{\mathbf Q}^{\ast }  \hat{\mathbf Q} \hat{\mathbf U}^{\ast } ).
    \end{equation*}
   Denote $\hat{\mathbf P} =\hat{\mathbf Q}^{\ast }\hat{\mathbf Q}$. Then there exist a unitary dual quaternion matrix $\hat{\mathbf W}$ such that $\hat{\mathbf P} =\hat{\mathbf W}\hat{\Sigma}\hat{\mathbf W}^{\ast }$, in which $\hat{\Sigma} ={\rm diag} (\hat{\lambda}_{1},\cdots ,\hat{\lambda}_{n} )$ and $\hat{\lambda}_{1},\cdots ,\hat{\lambda}_{n} $ are dual numbers.
    We rewrite $\hat{\mathbf W}=(\hat{\mathbf w}_{1},\hat{\mathbf w}_{2},\cdots,\hat{\mathbf w}_{n})$, then
    \begin{equation*}
        \begin{aligned}
            {\rm tr}( \hat{\mathbf Q}^{\ast }  \hat{\mathbf Q} )
            ={\rm tr}( \sum_{i=1}^{n}\hat{\lambda}_{i}  \hat{\mathbf w}_{i} \hat{\mathbf w}_{i}^{\ast } )
            =\sum_{i=1}^{n}\hat{\lambda}_{i} {\rm tr}(  \hat{\mathbf w}_{i} \hat{\mathbf w}_{i}^{\ast } )
            =\sum_{i=1}^{n}\hat{\lambda}_{i}.
        \end{aligned}
    \end{equation*}
    Since $\hat{\mathbf U}\hat{\mathbf W}$ is a unitary dual quaternion Hermitian matrix, then
    \begin{equation*}
        \begin{aligned}
            {\rm tr}(\hat{\mathbf U} \hat{\mathbf Q}^{\ast }  \hat{\mathbf Q} \hat{\mathbf U}^{\ast } )
            ={\rm tr}(\hat{\mathbf U} \hat{\mathbf W} \hat{\Sigma} \hat{\mathbf W}^{\ast } \hat{\mathbf U}^{\ast })
            =\sum_{i=1}^{n}\hat{\lambda}_{i}.
        \end{aligned}
    \end{equation*}
    Hence,
    \begin{equation*}
    \|  \hat{\mathbf Q}\|_{F}^{2}=\|  \hat{\mathbf V} \hat{\mathbf Q} \hat{\mathbf U}^{\ast } \|_{F}^{2}.
    \end{equation*}
\end{proof}

By Theorem \ref{theorem1}, we immediately obtain the similar result for dual quaternion Hermitian matrices.
\begin{corollary} \label{c3.2}
Suppose that $\hat{\mathbf Q}=\tilde{\mathbf Q}_{st}+\tilde{\mathbf Q}_{\mathcal I}\varepsilon\in \hat{\mathbb H}^{n}$ is a dual quaternion Hermitian matrix and $\hat{\mathbf U}\in \hat{\mathbb U}^{n}_2$ is a unitary matrices, then
\begin{equation}
    \|  \hat{\mathbf Q}\|_{F}^{2}= \|  \hat{\mathbf U} \hat{\mathbf Q} \hat{\mathbf U}^{\ast }\|_{F}^{2}.
\end{equation}
If $\tilde{\mathbf V}$ is a unitary quaternion matrix, then
\begin{equation}
    \|  \tilde{\mathbf Q}_{st} \|_{F}^{2}= \| ( \tilde{\mathbf V} \hat{\mathbf Q} \tilde{\mathbf V}^{\ast })_{st} \|_{F}^{2}\quad \text{and} \quad
 \|  \tilde{\mathbf Q}_{\mathcal I} \|_{F}^{2}= \| ( \tilde{\mathbf V} \hat{\mathbf Q} \tilde{\mathbf V}^{\ast })_{\mathcal I} \|_{F}^{2}.
\end{equation}
\end{corollary}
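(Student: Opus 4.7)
The plan is to derive both identities as direct specializations of Theorem~\ref{theorem1}, with almost no new work beyond checking hypotheses. For the first identity I would simply take $\hat{\mathbf V}=\hat{\mathbf U}$ in Theorem~\ref{theorem1}; the hypothesis $\hat{\mathbf U}\in\hat{\mathbb U}^n_2$ is exactly what is needed, the Hermiticity of $\hat{\mathbf Q}$ plays no role, and the conclusion drops out immediately.

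For the second identity, the key preliminary step is to embed the unitary quaternion matrix $\tilde{\mathbf V}$ as the dual quaternion matrix $\hat{\mathbf V}:=\tilde{\mathbf V}+\tilde{\mathbf O}\varepsilon$ and verify, using Definition~\ref{def8}, that $\hat{\mathbf V}^\ast\hat{\mathbf V}=\hat{\mathbf V}\hat{\mathbf V}^\ast=\hat{\mathbf I}_n$, so $\hat{\mathbf V}\in\hat{\mathbb U}^n_2$. Expanding the triple product $\tilde{\mathbf V}\hat{\mathbf Q}\tilde{\mathbf V}^\ast$ with the dual quaternion multiplication rule then produces $\tilde{\mathbf V}\tilde{\mathbf Q}_{st}\tilde{\mathbf V}^\ast + \tilde{\mathbf V}\tilde{\mathbf Q}_{\mathcal I}\tilde{\mathbf V}^\ast\varepsilon$, which cleanly separates the standard and dual parts. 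To finish, I would apply Theorem~\ref{theorem1} two further times, now to the dual quaternion matrices $\tilde{\mathbf Q}_{st}+\tilde{\mathbf O}\varepsilon$ and $\tilde{\mathbf Q}_{\mathcal I}+\tilde{\mathbf O}\varepsilon$ under the unitary transformation $\hat{\mathbf V}$. Because these matrices have zero dual part, the dual-number $F$-norm of~(\ref{fnorm}) reduces to the ordinary quaternion $F$-norm of the standard part, so the two applications of the theorem collapse to exactly the two scalar identities that are claimed.

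I do not foresee any genuine obstacle; everything reduces to bookkeeping with Definition~\ref{def8} and the $F$-norm formula~(\ref{fnorm}). The only points that deserve a line of care are verifying that the embedding $\tilde{\mathbf V}\mapsto\tilde{\mathbf V}+\tilde{\mathbf O}\varepsilon$ does give a unitary dual quaternion matrix, and that the expansion of $\tilde{\mathbf V}\hat{\mathbf Q}\tilde{\mathbf V}^\ast$ truly decouples with no surviving cross terms at order $\varepsilon$; both facts follow at once from the $\varepsilon$-part of $\tilde{\mathbf V}$ being zero.
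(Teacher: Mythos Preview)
Your proposal is correct and matches the paper's approach: the paper simply states that Corollary~\ref{c3.2} follows immediately from Theorem~\ref{theorem1} without giving further details, and your plan spells out precisely how that specialization works. The only thing you add beyond the paper is the explicit bookkeeping (embedding $\tilde{\mathbf V}$ as $\tilde{\mathbf V}+\tilde{\mathbf O}\varepsilon$ and reading off the standard and dual parts), which is exactly the routine verification the paper leaves implicit.
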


\subsection{Jacobi method for dual quaternion Hermitian matrices}
The Givens matrix plays an important role in Jacobi method. For the purpose of extending such a transformation to dual quaternion matrices, we intend to find the unitary dual quaternion matrix with size $2\times 2$ to digonalise a dual quaternion Hermitian matrix $\hat{\mathbf{Q}}\in \hat{\mathbb{H}}^{2}$. By simple computations, we can obtain the following two propositions which provide formulas to compute such unitary dual quaternion matrices.

\begin{proposition}\label{lemma1}
Let
$\tilde{\mathbf{Q}}=
\begin{bmatrix}
a & \tilde{c}\\
\tilde{c}^{\ast } & b
\end{bmatrix}\in \mathbb{H}^{2}$
be a quaternion Hermitian matrix. Assume that $\tilde{c}\neq 0$, define
\begin{equation}\label{tu}
    \tilde{\mathbf{U}}
    =\begin{bmatrix}
    \frac{-\tilde{c}}{\left (\left ( a-\lambda _{1}\right )^{2}+\tilde{c}^{\ast }\tilde{c} \right )^{\frac{1}{2}}} & \frac{-\tilde{c}}{\left (\left ( a-\lambda _{2}\right )^{2}+\tilde{c}^{\ast }\tilde{c} \right )^{\frac{1}{2}}}\\
    \frac{a-\lambda _{1}}{\left (\left ( a-\lambda _{1}\right )^{2}+\tilde{c}^{\ast }\tilde{c} \right )^{\frac{1}{2}}} & \frac{a-\lambda _{2}}{\left (\left ( a-\lambda _{2}\right )^{2}+\tilde{c}^{\ast }\tilde{c}\right )^{\frac{1}{2}}}
    \end{bmatrix},
\end{equation}
where $\lambda _{1},\lambda _{2}$ are the solutions of the equation $(a-x)(b-x)=\tilde{c}^{\ast }\tilde{c}$. Then, $\tilde{\mathbf{U}}$ is a unitary quaternion matrix and
$$\tilde{\mathbf{U}}^{\ast } \tilde{\mathbf{Q}} \tilde{\mathbf{U}}={\rm diag}( \lambda _{1},\lambda _{2}),$$ which is a diagonal
matrix and $\lambda _{1} \neq\lambda _{2}$.
\end{proposition}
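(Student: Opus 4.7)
The plan is to exploit the fact that for a Hermitian quaternion matrix, the diagonal entries $a$ and $b$ must satisfy $a=a^{\ast}$ and $b=b^{\ast}$, so both $a$ and $b$ are real scalars. Likewise, $\tilde{c}^{\ast}\tilde{c}=|\tilde{c}|^{2}$ is a positive real scalar because $\tilde{c}\neq 0$. With these observations the characteristic-like equation $(a-x)(b-x)=\tilde{c}^{\ast}\tilde{c}$ becomes an ordinary real quadratic $x^{2}-(a+b)x+ab-|\tilde{c}|^{2}=0$ whose discriminant is $(a-b)^{2}+4|\tilde{c}|^{2}>0$; this immediately yields two distinct real roots $\lambda_{1}\neq\lambda_{2}$, as claimed, and also guarantees that the quantities $(a-\lambda_{i})^{2}+\tilde{c}^{\ast}\tilde{c}$ appearing in the denominators of $\tilde{\mathbf{U}}$ are strictly positive reals, so the square roots in \eqref{tu} are well-defined real scalars that commute with every quaternion.

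Next, I would verify that $\tilde{\mathbf{U}}$ is unitary by a direct column computation. Since the normalizing denominators are real scalars, the squared norm of the $i$-th column of $\tilde{\mathbf{U}}$ equals
\[
\frac{\tilde{c}^{\ast}\tilde{c}+(a-\lambda_{i})^{2}}{(a-\lambda_{i})^{2}+\tilde{c}^{\ast}\tilde{c}}=1,
\]
so both columns have unit $2$-norm. For orthogonality, the inner product of the two columns reduces (after pulling out real scalars) to $\tilde{c}^{\ast}\tilde{c}+(a-\lambda_{1})(a-\lambda_{2})$. Using Vieta's relations $\lambda_{1}+\lambda_{2}=a+b$ and $\lambda_{1}\lambda_{2}=ab-|\tilde{c}|^{2}$, this quantity simplifies to $-|\tilde{c}|^{2}+\tilde{c}^{\ast}\tilde{c}=0$, establishing $\tilde{\mathbf{U}}^{\ast}\tilde{\mathbf{U}}=\tilde{\mathbf{I}}_{2}$.

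Finally, I would confirm the diagonalization by checking column-wise that $\tilde{\mathbf{Q}}\tilde{\mathbf{u}}_{i}=\lambda_{i}\tilde{\mathbf{u}}_{i}$, which is equivalent to the matrix identity $\tilde{\mathbf{Q}}\tilde{\mathbf{U}}=\tilde{\mathbf{U}}\,\mathrm{diag}(\lambda_{1},\lambda_{2})$ and thus to $\tilde{\mathbf{U}}^{\ast}\tilde{\mathbf{Q}}\tilde{\mathbf{U}}=\mathrm{diag}(\lambda_{1},\lambda_{2})$. The first entry of $\tilde{\mathbf{Q}}\tilde{\mathbf{u}}_{i}$ gives $-a\tilde{c}+\tilde{c}(a-\lambda_{i})=-\lambda_{i}\tilde{c}$, matching $\lambda_{i}$ times the first entry of $\tilde{\mathbf{u}}_{i}$ (note that $\lambda_{i}$ is real and so commutes with $\tilde{c}$). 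The second entry requires $ab-b\lambda_{i}-\tilde{c}^{\ast}\tilde{c}=\lambda_{i}(a-\lambda_{i})$, which rearranges to exactly $(a-\lambda_{i})(b-\lambda_{i})=\tilde{c}^{\ast}\tilde{c}$, the defining equation of $\lambda_{i}$. The main subtlety to watch for is not inadvertently commuting a general quaternion past another; the whole verification hinges on the fact that the scalars $a$, $b$, $\lambda_{i}$, and the normalizing radicals are all real and hence central in $\mathbb{Q}$, so only this one observation needs to be deployed carefully.
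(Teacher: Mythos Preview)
Your proof is correct and is precisely the ``simple computation'' the paper alludes to without writing out; the paper does not provide an explicit proof of this proposition, so your direct verification of unitarity and the eigenvector equations, together with the discriminant argument for $\lambda_1\neq\lambda_2$, is exactly what is needed.
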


\begin{proposition}\label{lemma2}
Let $\hat{\mathbf Q}=\tilde{\mathbf{Q}}_{st} +\tilde{\mathbf{Q}}_{\mathcal I}\varepsilon\in\hat{ \mathbb{H}}^{2}$ be a dual quaternion Hermitian matrix. Suppose that
$\tilde{\mathbf{U}}$ is a unitary quaternion matrix defined as (\ref{tu}) such that $\tilde{\mathbf{U}}^{\ast } \tilde{\mathbf{Q}}_{st} \tilde{\mathbf{U}}={\rm diag}\left ( \lambda _{1},\lambda _{2}\right )$. Denote
$\tilde{\mathbf{U}}^{\ast } \tilde{\mathbf{Q}}_{\mathcal I} \tilde{\mathbf{U}}=
\begin{bmatrix}
x & \hat{z}\\
\hat{z}^{\ast } & y
\end{bmatrix}$ and define
\begin{equation}\label{tv}
    \hat{\mathbf V}=\begin{bmatrix}
    1 & \frac{\hat{z}}{\lambda _{2}-\lambda _{1}}\varepsilon \\
    \frac{\hat{z}^{\ast }}{\lambda _{1}-\lambda _{2}}\varepsilon & 1
    \end{bmatrix}.
\end{equation}
Then, we have
$$\hat{\mathbf V}^{\ast  }\tilde{\mathbf{U}}^{\ast } \hat{\mathbf Q}^{\ast  }\tilde{\mathbf{U}} \hat{\mathbf V}={\rm diag} ( \lambda _{1}+x\varepsilon ,\lambda _{2}+y\varepsilon ),
$$ which is a diagonal dual number matrix and $\tilde{\mathbf{U}} \hat{\mathbf V}$ is a unitary dual quaternion matrix.
\end{proposition}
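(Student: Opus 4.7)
The plan is to verify both claims by explicit block computation, exploiting that $\hat{\mathbf{V}}$ differs from the identity only at order $\varepsilon$, together with the rule $\varepsilon^{2}=0$. First I would distribute the (constant) quaternion similarity across the dual decomposition $\hat{\mathbf{Q}}=\tilde{\mathbf{Q}}_{st}+\tilde{\mathbf{Q}}_{\mathcal I}\varepsilon$. Since $\tilde{\mathbf{U}}$ carries no $\varepsilon$-part, Proposition \ref{lemma1} yields immediately
\[
\tilde{\mathbf{U}}^{\ast}\hat{\mathbf{Q}}\tilde{\mathbf{U}}
=\begin{bmatrix}\lambda_{1}+x\varepsilon & \hat{z}\varepsilon\\ \hat{z}^{\ast}\varepsilon & \lambda_{2}+y\varepsilon\end{bmatrix}.
\]
A preliminary observation: because $\hat{\mathbf{Q}}$ is Hermitian, both $\tilde{\mathbf{Q}}_{st}$ and $\tilde{\mathbf{Q}}_{\mathcal I}$ are quaternion Hermitian, so the diagonal entries $x,y$ lie in $\mathbb{R}$. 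This is what makes the eventual diagonal entries genuine \emph{dual numbers} rather than merely dual quaternions.

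Next I would verify that $\hat{\mathbf{V}}$ is unitary. Setting $\alpha=1/(\lambda_{2}-\lambda_{1})$, which is well-defined since Proposition \ref{lemma1} gives $\lambda_{1}\neq\lambda_{2}$, every cross term in $\hat{\mathbf{V}}^{\ast}\hat{\mathbf{V}}$ that multiplies two off-diagonal entries picks up a factor $\varepsilon^{2}=0$, so the identity $\hat{\mathbf{V}}^{\ast}\hat{\mathbf{V}}=\hat{\mathbf{I}}_{2}$ reduces to obvious sign cancellations on the off-diagonal, and the same for $\hat{\mathbf{V}}\hat{\mathbf{V}}^{\ast}$. Then I would compute $\hat{\mathbf{V}}^{\ast}(\tilde{\mathbf{U}}^{\ast}\hat{\mathbf{Q}}\tilde{\mathbf{U}})\hat{\mathbf{V}}$ entry by entry. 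All $O(\varepsilon^{2})$ contributions vanish, so each entry is affine in $\varepsilon$. The $(1,2)$-entry collects $(\lambda_{1}\alpha+1)\hat{z}\varepsilon$ from multiplying by $\hat{\mathbf{V}}$ on the right and subtracts $\lambda_{2}\alpha\hat{z}\varepsilon$ from multiplying by $\hat{\mathbf{V}}^{\ast}$ on the left, and these cancel via the scalar identity $\lambda_{1}\alpha+1=\lambda_{2}\alpha$. The $(2,1)$-entry cancels symmetrically through $1-\lambda_{2}\alpha=-\lambda_{1}\alpha$, while the diagonal entries remain $\lambda_{1}+x\varepsilon$ and $\lambda_{2}+y\varepsilon$ because the only perturbations to them are of order $\varepsilon^{2}$. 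This is precisely how the denominators $\lambda_{2}-\lambda_{1}$ in $\hat{\mathbf{V}}$ are tuned to annihilate the $\hat{z}\varepsilon$ off-diagonal.

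Finally, unitarity of $\tilde{\mathbf{U}}\hat{\mathbf{V}}$ is immediate from $(\tilde{\mathbf{U}}\hat{\mathbf{V}})^{\ast}(\tilde{\mathbf{U}}\hat{\mathbf{V}})=\hat{\mathbf{V}}^{\ast}\tilde{\mathbf{U}}^{\ast}\tilde{\mathbf{U}}\hat{\mathbf{V}}=\hat{\mathbf{V}}^{\ast}\hat{\mathbf{V}}=\hat{\mathbf{I}}_{2}$, and analogously on the other side. The main obstacle is purely bookkeeping: one must respect the non-commutativity of quaternion multiplication while recognising that $\lambda_{1},\lambda_{2},\alpha,x,y$ are real and therefore central, and that $\varepsilon^{2}=0$ truncates every expansion at first order. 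No genuinely deep step is required beyond these two observations.
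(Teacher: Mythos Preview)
Your proposal is correct and is precisely the ``simple computation'' the paper alludes to in the sentence preceding the two propositions; the paper itself offers no further detail. Your observation that $x,y\in\mathbb{R}$ (because $\tilde{\mathbf{Q}}_{\mathcal I}$ is quaternion Hermitian) is the only point one must not overlook, and you handle it correctly.
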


From the above two propositions, we define two special matrices $L(\tilde{\mathbf Q},k,l)$ associated with a quaternion Hermitian matrix $\tilde{\mathbf Q}\in \mathbb{H}^{n}$ and $J_{\hat{\mathbf Q}}(k,l)$ associated with a dual quaternion Hermitian matrix $\hat{\mathbf Q}\in \hat{\mathbb{H}}^{n}$ for any $1\leq k < l\leq n$.

\begin{definition}\label{L}
Let $\tilde{\mathbf Q}\in \mathbb{H}^{n}$ be a quaternion Hermitian matrix. For any $k,l$ with $1\leq k < l\leq n$, define $L(\tilde{\mathbf Q},k,l)=(L_{ij})\in \mathbb{U}^{n}_2$, in which $L_{ii}=1$ for all $i\ne k,l$; the submatrix formed by the intersection of its $k^{th},l^{th}$ rows and $k^{th},l^{th}$ columns is the $2\times 2$ unitary quaternion matrix as given in Proposition \ref{lemma1} which diagonalizes the submatrix formed by the intersection of $k^{th},l^{th}$ rows and $k^{th},l^{th}$ columns of $\tilde{\mathbf Q}$; and elements $L_{ij}$ in other positions are all zero.
\end{definition}

\begin{definition}\label{J}
Let $\hat{\mathbf Q}\in \hat{\mathbb{H}}^{n}$ be a dual quaternion Hermitian matrix. For any $k,l$ with $1\leq k < l\leq n$, define $J_{\hat{\mathbf Q}}(k,l)=\left ( J_{ij}\right )\in \hat{\mathbb U}^{n}_2$, where $J_{ii}=1$ for all $i\ne k,l$; the submatrix formed by the intersection of its $k^{th},l^{th}$ rows and $k^{th},l^{th}$ columns is the $2\times 2$ unitary dual quaternion matrix as given in Proposition \ref{lemma2} which can diagonalize the submatrix formed by the intersection of $k^{th},l^{th}$ rows and $k^{th},l^{th}$ columns of $\hat{\mathbf Q}$; and elements $J_{ij}$ in other positions are all zero. We say that  $J_{\hat{\mathbf Q}}(k,l)$ is a {\bf Givens matrix} for $\hat{\mathbf Q}$ with positions $k$ and $l$.
\end{definition}

With the unitary transformation given by the Givens matrix
$J_{\hat{\mathbf Q}}(k,l)$, we can {\it eliminate} the elements of $\hat{\mathbf Q}$ in positions $(k,l)$ and $(l,k)$. That is to say, we denote $J^*_{\hat{\mathbf Q}}(k,l)\hat{\mathbf Q}J_{\hat{\mathbf Q}}(k,l)=(\hat{p}_{ij})$, then we have $\hat{p}_{kl}=\hat{p}_{lk}=\hat{0}$, where $\hat{0}=\tilde{0}+\tilde{0}\varepsilon$ is the zero element of $\hat{\mathbb Q}$. In order to describe the efficiency of such eliminations,
we define two functions of $\hat{\mathbf Q}=(\hat{q}_{ij})\in \hat{\mathbb{Q}}^{n\times n}$ and $\tilde{\mathbf P}=(\tilde{p}_{ij})\in \mathbb{Q}^{n\times n}$ by
\begin{equation}
    N(\hat{ \mathbf Q}):=\| \hat{\mathbf Q} \|_{F}^{2}-\sum_{i=1}^{n}| \hat{q}_{ii} |^{2},
\quad   N(\tilde{ \mathbf P}):= \|  \tilde{\mathbf P} \|_{F}^{2}-\sum_{i=1}^{n} | \tilde{p}_{ii} |^{2}.
\end{equation}
We will find that $N(\hat{ \mathbf Q})$ is decreasing via such eliminations.

\begin{theorem}\label{lemma3}
Let $\hat{\mathbf Q}=(\hat{q}_{ij})\in \hat{\mathbb{H}}^{n}$ and $J_{\hat{\mathbf Q}}(k,l)$ be a Givens matrix of $\hat{\mathbf Q}$ with positions $k$ and $l$. Then, it holds
\begin{equation}\label{z1}
    N\left (J^{\ast }_{\hat{\mathbf Q}}(k,l)\hat{\mathbf Q}J_{\hat{\mathbf Q}}(k,l) \right )=N(\hat{ \mathbf Q})-\left | \hat{q}_{kl} \right |^{2}-\left | \hat{q}_{lk} \right |^{2}.
\end{equation}
Moreover, if $|\hat{q}_{kl}|=\underset{i\ne j}{\max}|\hat{q}_{ij}|$, then
\begin{equation*}
N( \tilde{\mathbf P}_{st})=N( \tilde{\mathbf Q}_{st})- |(\tilde{\mathbf Q}_{st})_{kl}|^{2}-|(\tilde{\mathbf Q}_{st})_{lk}|^{2}
\leq (1-\frac{2}{n(n-1)} )
N( \tilde{\mathbf Q}_{st}),
\end{equation*}
where $\tilde{\mathbf P}_{st}$ be the standard part of $J^{\ast }_{\hat{\mathbf Q}}(k,l)\hat{\mathbf Q}J_{\hat{\mathbf Q}}(k,l)$.
\end{theorem}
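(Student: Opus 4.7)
The plan is to reduce both assertions to $2\times 2$ block calculations and combine them with the $F$-norm invariance under unitary conjugation given in Corollary \ref{c3.2}. First I would prove the equality \eqref{z1}. Writing $\hat{\mathbf Q}$ as a block matrix with the $k^{\text{th}}$ and $l^{\text{th}}$ rows and columns split off from the rest, conjugation by $J_{\hat{\mathbf Q}}(k,l)$ acts on the $\{k,l\}$-block as the $2\times 2$ unitary $\tilde{\mathbf U}\hat{\mathbf V}$ from Proposition \ref{lemma2} and as the identity elsewhere. Two facts follow at once: the diagonal entries $\hat{p}_{ii}$ for $i\notin\{k,l\}$ equal $\hat{q}_{ii}$, and Corollary \ref{c3.2} applied to the $2\times 2$ principal submatrix, together with its diagonalization by $\tilde{\mathbf U}\hat{\mathbf V}$, yields $|\hat{p}_{kk}|^2+|\hat{p}_{ll}|^2=|\hat{q}_{kk}|^2+|\hat{q}_{ll}|^2+|\hat{q}_{kl}|^2+|\hat{q}_{lk}|^2$. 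Combining these with the global identity $\|J^{\ast}\hat{\mathbf Q}J\|_F^2=\|\hat{\mathbf Q}\|_F^2$ and the expansion $\|\hat{\mathbf Q}\|_F^2=\sum_{i,j}|\hat{q}_{ij}|^2$ directly gives \eqref{z1}.

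For the second assertion, I would first identify the standard part of $J^{\ast}_{\hat{\mathbf Q}}(k,l)\hat{\mathbf Q}J_{\hat{\mathbf Q}}(k,l)$. Since the $2\times 2$ block of $J_{\hat{\mathbf Q}}(k,l)$ is $\tilde{\mathbf U}\hat{\mathbf V}$ and the matrix $\hat{\mathbf V}$ in \eqref{tv} has standard part $\tilde{\mathbf I}_2$, the standard part of $J_{\hat{\mathbf Q}}(k,l)$ coincides with the quaternionic Givens matrix $L(\tilde{\mathbf Q}_{st},k,l)$ of Definition \ref{L}. Applying the multiplication rule in Definition \ref{def8}(ii) to take standard parts of the product $J^{\ast}\hat{\mathbf Q}J$, one obtains $\tilde{\mathbf P}_{st}=L(\tilde{\mathbf Q}_{st},k,l)^{\ast}\,\tilde{\mathbf Q}_{st}\,L(\tilde{\mathbf Q}_{st},k,l)$, i.e., $\tilde{\mathbf P}_{st}$ is nothing but a classical Jacobi rotation applied to the quaternion Hermitian matrix $\tilde{\mathbf Q}_{st}$. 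Repeating the block-and-$F$-norm argument from the first part at the pure-quaternion level yields the equality $N(\tilde{\mathbf P}_{st})=N(\tilde{\mathbf Q}_{st})-|(\tilde{\mathbf Q}_{st})_{kl}|^2-|(\tilde{\mathbf Q}_{st})_{lk}|^2$.

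For the inequality, I would invoke the maximality hypothesis. The dual-number ordering in Definition \ref{def3} compares standard parts first, so $|\hat{q}_{kl}|=\max_{i\ne j}|\hat{q}_{ij}|$ forces $|(\tilde{\mathbf Q}_{st})_{kl}|\ge|(\tilde{\mathbf Q}_{st})_{ij}|$ for every $i\ne j$. Averaging over the $n(n-1)$ off-diagonal positions gives $|(\tilde{\mathbf Q}_{st})_{kl}|^2\ge N(\tilde{\mathbf Q}_{st})/(n(n-1))$, and Hermitian symmetry $|(\tilde{\mathbf Q}_{st})_{lk}|=|(\tilde{\mathbf Q}_{st})_{kl}|$ doubles the contribution, producing the claimed factor $1-2/(n(n-1))$.

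The hard part, I expect, is not the algebra itself but the careful standard-part bookkeeping that identifies $(J^{\ast}\hat{\mathbf Q}J)_{st}$ with $L(\tilde{\mathbf Q}_{st},k,l)^{\ast}\tilde{\mathbf Q}_{st}L(\tilde{\mathbf Q}_{st},k,l)$; once that reduction is in hand, everything else is a transcription of the classical Jacobi $F$-norm decrement argument to the dual-quaternion setting.
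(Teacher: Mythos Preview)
Your proposal is correct and follows essentially the same line as the paper's proof: both derive \eqref{z1} from the $F$-norm invariance in Corollary~\ref{c3.2} together with the $2\times 2$ diagonalization, and both obtain the standard-part inequality by the classical averaging argument. The only difference is that the paper simply writes ``from \eqref{z1}, it is easy to show'' the standard-part equality (implicitly taking standard parts of the dual-number identity \eqref{z1}, using that the standard part of $N(\hat{\mathbf P})$ equals $N(\tilde{\mathbf P}_{st})$), whereas you go the extra step of explicitly identifying $\tilde{\mathbf P}_{st}=L(\tilde{\mathbf Q}_{st},k,l)^{\ast}\tilde{\mathbf Q}_{st}L(\tilde{\mathbf Q}_{st},k,l)$ and rerunning the argument at the quaternion level; this is a bit more work but makes the bookkeeping fully transparent.
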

\begin{proof}
Denote $\hat{\mathbf P}=J_{\hat{\mathbf Q}}\left ( k,l\right )^{\ast }\hat{\mathbf Q}J_{\hat{\mathbf Q}}\left ( k,l\right )=(\hat{p}_{ij})$. From Corollary \ref{c3.2} and Definition \ref{J}, we have
\begin{align*}
N(\hat{\mathbf P})&= \| \hat{\mathbf P}  \|_{F}^{2}-\sum_{i=1}^{n}\left | \hat{p}_{ii} \right |^{2}
\\&= \| \hat{\mathbf Q}  \|_{F}^{2}-\sum_{i\ne k,l}^{}\left | \hat{q}_{ii}
\right |^{2}- ( \left | \hat{p}_{kk}\right |^{2}+\left |\hat{p}_{ll}\right |^{2}  )
\\&= \| \hat{\mathbf Q}  \|_{F}^{2}-\sum_{i\ne k,l}^{}\left | \hat{q}_{ii}\right |^{2}- ( \left | \hat{q}_{kk}\right |^{2}+\left |\hat{q}_{ll}\right |^{2} +
 \left | \hat{q}_{kl}\right |^{2}+\left |\hat{q}_{lk}\right |^{2}  )
\\&= \| \hat{\mathbf Q}  \|_{F}^{2}-\sum_{i=1}^{n}\left | \hat{q}_{ii}\right |^{2}-\left | \hat{q}_{kl}\right |^{2}-\left |\hat{q}_{lk}\right |^{2}
\\&=N(\hat{\mathbf Q})-\left | \hat{q}_{kl}\right |^{2}-\left |\hat{q}_{lk}\right |^{2}.
\end{align*}
If $|\hat{q}_{kl}|=\underset{i\ne j}{\max}|\hat{q}_{ij}|$, from (\ref{z1}), it is easy to show that
\begin{equation*}
N( \tilde{\mathbf P}_{st})=N( \tilde{\mathbf Q}_{st})- |(\tilde{\mathbf Q}_{st})_{kl}|^{2}-|(\tilde{\mathbf Q}_{st})_{lk}|^{2}
\leq (1-\frac{2}{n(n-1)} )
N( \tilde{\mathbf Q}_{st}).
\end{equation*}
Thus, we complete the proof.
\end{proof}

Clearly, by Theorem \ref{lemma3}, if the standard part of $\hat{q}_{kl}$ is not equal to zero, then
\begin{equation*}
    N\left (J^{\ast }_{\hat{\mathbf Q}}(k,l)\hat{\mathbf Q}J_{\hat{\mathbf Q}}\left ( k,l\right ) \right )<
    N(\hat{\mathbf Q}), \quad N( \tilde{\mathbf P}_{st})< N(\tilde{\mathbf Q}_{st}),
\end{equation*}
which implies that $N(\hat{ \mathbf Q})$ is decreasing under the unitary transformation given by the Givens matrix $J_{\hat{\mathbf Q}}(k,l)$.

Theorem \ref{lemma3} indicates that the main idea of the generalized Jacobi eigenvalue algorithm is to eliminate two off-diagonal elements of $\hat{\mathbf Q}$ with the largest magnitude at each iteration via the Givens matrix, which can be obtained by Propositions \ref{lemma1} and \ref{lemma2}. We describe the generalized Jacobi eigenvalue algorithm for computing eigenvalues and eigenvectors of dual quaternion Hermitian matrices as follows.

\begin{algorithm}[!ht]
\footnotesize
    \caption{The Generalized Jacobi Eigenvalue Algorithm }
    \begin{algorithmic}
    \REQUIRE  $\hat{\mathbf Q}=\tilde{\mathbf Q}_{st} +\tilde{\mathbf Q}_{\mathcal I}\varepsilon\in\hat{ \mathbb{H}}^{n}$.
    \STATE Let $\hat{\mathbf Q}^{\left ( 0\right )}=\hat{\mathbf Q}$ and $\hat{\mathbf J}=\hat{\mathbf{I}}_n$. Select an accuracy parameter $\epsilon > 0$. Set $t:=0$ and $r^{(0)}:=1$.
    \WHILE{$r^{(t)}\ge \epsilon$}
    \STATE Compute $r^{(t)}=\underset{i<j}{\max}|(\tilde{\mathbf Q}^{(t)}_{st})_{ij}|$ and $(k,l)=\underset{k<l}{\arg\max}|(\tilde {\mathbf Q}^{(t)}_{st} )_{kl} |$.
    \STATE Compute the Givens matrix $J_{\hat{\mathbf Q}^{(t)}}(k,l)$ by  Propositions \ref{lemma1} and \ref{lemma2}.
    \STATE Update $\hat{\mathbf Q}^{\left ( t+1\right )}:=J^{\ast }_{\hat{\mathbf Q}^{\left ( t\right )}}(k,l)\hat{\mathbf Q}^{\left ( t\right )}J_{\hat{\mathbf Q}^{\left ( t\right )}}(k,l)$, $\hat{\mathbf J}:=\hat{\mathbf J}J_{\hat{\mathbf Q}^{(t)}}(k,l)$ and $t:=t+1$.
    \ENDWHILE
    \STATE Denote $\lambda _{i}=(\hat{\mathbf Q}^{( t )})_{ii}$ for $i=1,\cdots ,n$.
    \STATE Let $(\hat{\mathbf V})_{ij}=\frac{(\tilde{\mathbf Q}^{(t)}_{\mathcal I} )_{ji}}{ (\tilde{\mathbf Q}^{(t)}_{st})_{ii}- (\tilde{\mathbf Q}^{(t)}_{st} )_{jj}}\varepsilon$ for $j \neq i$ and $(\hat{\mathbf V})_{ii}=\hat{1}$ for $i=1,2,\cdots,n$.
    \STATE Update $\hat{\mathbf J}:=\hat{\mathbf J}\hat{\mathbf V}$.
    \ENSURE $\left \{\lambda _{i}\right \}_{i=1}^{n}$, $\hat{\mathbf J}$.
    \end{algorithmic}
    \label{Jacobi_method_algorithm1}
\end{algorithm}

The following theorem shows that Algorithm \ref{Jacobi_method_algorithm1} terminates after finite many iterations. Moreover, the standard part of the obtained dual quaternion Hermitian matrix at the finial iteration can approximate to a diagonal dual number matrix.

\begin{theorem}\label{thmst} For any given $\hat{\mathbf Q}=\tilde{\mathbf Q}_{st} +\tilde{\mathbf Q}_{\mathcal I}\varepsilon\in\hat{ \mathbb{H}}^{n}$ and accuracy parameter $\epsilon > 0$, Algorithm \ref{Jacobi_method_algorithm1} terminates in at most $T$ iterations with $r^{(T)}< \epsilon$, where $T$ is a fixed positive integer. Moreover, suppose that the eigenvalues of $\tilde{\mathbf Q}_{st}$ are $\mu_1\ge\mu_2\ge\cdots\ge\mu_n$, then
$$\max_{1\le i\le n}|\mu_i-(\tilde{\mathbf Q}^{(T)}_{st})_{ii}|<\sqrt{n(n-1)}\epsilon.
$$
\end{theorem}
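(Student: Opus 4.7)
The plan is to separate the theorem into two parts: (i) finite termination of the while loop, which follows from a geometric contraction on $N(\tilde{\mathbf Q}^{(t)}_{st})$, and (ii) the eigenvalue approximation bound, which follows from the Hoffman–Wielandt inequality (Proposition \ref{lemma0}) applied to the standard part at termination. The key observation enabling (ii) is that the standard-part trajectory is a sequence of quaternion unitary similarities of $\tilde{\mathbf Q}_{st}$, so the eigenvalues are preserved throughout.

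For finite termination, I would iterate the contraction in Theorem \ref{lemma3}. Because the algorithm always chooses $(k,l)$ to be the argmax of $|(\tilde{\mathbf Q}^{(t)}_{st})_{ij}|$ over $i<j$, Theorem \ref{lemma3} yields
\begin{equation*}
N\bigl(\tilde{\mathbf Q}^{(t+1)}_{st}\bigr) \le \left(1 - \tfrac{2}{n(n-1)}\right) N\bigl(\tilde{\mathbf Q}^{(t)}_{st}\bigr),
\end{equation*}
so by induction $N(\tilde{\mathbf Q}^{(t)}_{st}) \le \alpha^{t}\, N(\tilde{\mathbf Q}_{st})$ with $\alpha := 1 - 2/(n(n-1)) \in [0,1)$. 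Since $(r^{(t)})^{2}$ is one of the summands of $N(\tilde{\mathbf Q}^{(t)}_{st})$, one has $(r^{(t)})^{2} \le N(\tilde{\mathbf Q}^{(t)}_{st})$, and hence $r^{(t)} < \epsilon$ as soon as $\alpha^{t} N(\tilde{\mathbf Q}_{st}) < \epsilon^{2}$. This furnishes an explicit upper bound $T := \lceil \log(N(\tilde{\mathbf Q}_{st})/\epsilon^{2}) / \log(1/\alpha) \rceil$ on the number of iterations.

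For the eigenvalue bound, I would first verify that each Givens matrix $\hat{\mathbf J}$ in the iteration has standard part $\tilde{\mathbf J}_{st}$ that is a unitary quaternion matrix: the identity $\hat{\mathbf J}^{\ast}\hat{\mathbf J} = \hat{\mathbf I}_{n}$ reads off on the standard part as $\tilde{\mathbf J}_{st}^{\ast}\tilde{\mathbf J}_{st} = \tilde{\mathbf I}_{n}$, and moreover $(\hat{\mathbf J}^{\ast}\hat{\mathbf Q}\hat{\mathbf J})_{st} = \tilde{\mathbf J}_{st}^{\ast}\tilde{\mathbf Q}_{st}\tilde{\mathbf J}_{st}$. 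Consequently $\tilde{\mathbf Q}^{(T)}_{st}$ is quaternion-unitarily similar to $\tilde{\mathbf Q}_{st}$ and has the same eigenvalues $\mu_{1} \ge \cdots \ge \mu_{n}$. At termination, every off-diagonal entry of $\tilde{\mathbf Q}^{(T)}_{st}$ has magnitude at most $r^{(T)} < \epsilon$, so if $\tilde{\mathbf D}$ denotes the diagonal matrix with the same diagonal as $\tilde{\mathbf Q}^{(T)}_{st}$, then $\|\tilde{\mathbf Q}^{(T)}_{st} - \tilde{\mathbf D}\|_{F}^{2} = N(\tilde{\mathbf Q}^{(T)}_{st}) < n(n-1)\epsilon^{2}$. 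Applying Proposition \ref{lemma0} to the two quaternion Hermitian matrices $\tilde{\mathbf Q}^{(T)}_{st}$ and $\tilde{\mathbf D}$ (regarded as dual quaternion Hermitian matrices with zero dual part), whose eigenvalues are $\mu_{1} \ge \cdots \ge \mu_{n}$ and the sorted diagonal entries of $\tilde{\mathbf Q}^{(T)}_{st}$ respectively, gives the desired bound $\max_{i}|\mu_{i} - (\tilde{\mathbf Q}^{(T)}_{st})_{ii}| < \sqrt{n(n-1)}\,\epsilon$ once the diagonal entries are identified with the sorted eigenvalues.

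The main obstacle I anticipate is not any single calculation, but rather keeping careful track of the split between the standard and dual parts: the algorithm and Givens matrices live in the dual quaternion setting, yet the convergence rate and the eigenvalue comparison are both statements about the standard part only. The contraction of Theorem \ref{lemma3} is stated exactly for the standard part, and the unitary-similarity argument above is precisely what transfers the analysis from $\hat{\mathbf Q}$ down to $\tilde{\mathbf Q}_{st}$, so once this bookkeeping is done the rest is routine. A minor subtlety is the indexing in the final inequality: Proposition \ref{lemma0} produces its bound for the sorted pairing, so the stated inequality is to be understood after matching the diagonal entries of $\tilde{\mathbf Q}^{(T)}_{st}$ with $\mu_{1} \ge \cdots \ge \mu_{n}$ in decreasing order.
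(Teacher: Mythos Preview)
Your proposal is correct and follows essentially the same approach as the paper: both arguments obtain finite termination from the geometric contraction $N(\tilde{\mathbf Q}^{(t+1)}_{st}) \le (1-2/(n(n-1)))\,N(\tilde{\mathbf Q}^{(t)}_{st})$ supplied by Theorem~\ref{lemma3}, and both derive the eigenvalue bound by applying the Hoffman--Wielandt inequality (Proposition~\ref{lemma0}) to $\tilde{\mathbf Q}^{(T)}_{st}$ versus its diagonal part. Your explicit remark that the standard part of each Givens matrix is quaternion-unitary (so that the eigenvalues of $\tilde{\mathbf Q}^{(t)}_{st}$ coincide with those of $\tilde{\mathbf Q}_{st}$) and your observation about the sorted pairing are useful clarifications that the paper leaves implicit, but the overall route is the same.
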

\begin{proof}
Let ${\rm diag}(\tilde{\mathbf Q}^{(t)}_{st})$ be consisted of the diagonal elements $(\tilde{\mathbf Q}^{(t)}_{st})_{ii}$ of $\tilde{\mathbf Q}^{(t)}_{st}$  with  $(\tilde{\mathbf Q}^{(t)}_{st})_{11}\ge \ldots\ge (\tilde{\mathbf Q}^{(t)}_{st})_{nn}$ at the $t$-th iteration in Algorithm \ref{Jacobi_method_algorithm1}. It follows from (\ref{H-W}) that
\begin{equation}\label{z2}
\sum_{i=1}^{n} (\mu_i-(\tilde{\mathbf Q}^{(t)}_{st})_{ii})^2\le \|\tilde{\mathbf Q}^{(t)}_{st} -
\operatorname{diag}(\tilde{\mathbf Q}^{(t)}_{st})
\|^2_{F}\le n(n-1) \underset{i\ne j}{\max}
| (\tilde{\mathbf Q}^{(t)}_{st})_{ij} |^2.
 \end{equation}
By Theorem \ref{lemma3}, we have
\begin{align}\label{z3}
N(\hat{\mathbf Q}_{st}^{\left ( t+1\right )})
&=N( \tilde{\mathbf Q}^{\left ( t\right )}_{st})-
| (\tilde{\mathbf Q}^{\left ( t\right )}_{st}  )_{kl}|^{2}-|(\tilde{\mathbf Q}^{\left ( t\right )}_{st})_{lk} |^{2}
\\
&\leq  (1-\frac{2}{n\left ( n-1\right )} )
N( \tilde{\mathbf Q}^{\left ( t\right )}_{st})
\nonumber \\&\leq \Delta^{t+1}
N( \hat{\mathbf Q}_{st}^{\left ( 0\right )}), \nonumber
\end{align}
where
$$\Delta=1-\frac{2}{n\left ( n-1\right )}.$$
Taking
$$T=\left\lceil \frac{\log(\epsilon/N( \hat{\mathbf Q}_{st}^{(0)}))}{\log \Delta}\right\rceil,$$
by the definition of $r^{(t)}$ in Algorithm \ref{Jacobi_method_algorithm1}, (\ref{z2}) and (\ref{z3}) implies that $r^{(T)}<\epsilon$. That is, the algorithm terminates in at most $T$ iterations.
Moreover, it follows from (\ref{z2}) that
$$
\max_{1\le i\le n}|\mu_i-(\tilde{\mathbf Q}^{(T)}_{st})_{ii}|<\sqrt{n(n-1)}\epsilon
$$
holds.
\end{proof}


From Theorem \ref{thmst}, we make sure that Algorithm \ref{Jacobi_method_algorithm1} can provide $\epsilon$-approximation to the standard part of eigenvalue of $\hat{\mathbf Q}$ after finite may iterations. In order to prove Algorithm \ref{Jacobi_method_algorithm1} can also provide $\epsilon$-approximation to the dual part of eigenvalue of $\hat{\mathbf Q}$,  we need the following two lemmas.
\begin{lemma}\label{lemma4}
Let $\hat{\mathbf Q}=\tilde{\mathbf Q}_{st} +\tilde{\mathbf Q}_{\mathcal I}\varepsilon \in\hat{ \mathbb{H}}^{n}$ be a dual quaternion Hermitian matrix with $\tilde{\mathbf Q}_{st}=(\tilde{q}_{1,ij})$ and $\tilde{\mathbf Q}_{\mathcal I}=(\tilde{q}_{2,ij})$.
Suppose that $\tilde{q}_{1,ij}=\tilde{0}$ and $\tilde{q}_{1,ii} \neq \tilde{q}_{1,jj}$ for $i,j=1,\cdots,n$ with $i\neq j$, then $\tilde{q}_{1,ii}+\tilde{q}_{2,ii}\varepsilon$ is an eigenvalue of
$\hat{\mathbf Q}$ with its corresponding eigenvector $\hat{\mathbf v}_i$ for $i=1,\ldots,n$, where the $i$th component is $(\hat{\mathbf v}_i)_i=\hat{1}$, and the $j$th component is $(\hat{\mathbf v}_i)_j=\dfrac{\tilde{q}_{2,ji}}{\tilde{q}_{1,ii}-\tilde{q}_{1,jj}}\varepsilon$ for $j \neq i$.
\end{lemma}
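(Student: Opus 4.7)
The plan is to verify the eigenvalue equation $\hat{\mathbf Q}\hat{\mathbf v}_i = \hat{\mathbf v}_i(\tilde{q}_{1,ii}+\tilde{q}_{2,ii}\varepsilon)$ directly, componentwise, using Definition \ref{def12}. First I would observe that since $\hat{\mathbf Q}\in\hat{\mathbb H}^n$, the diagonal entries are self-conjugate, so $\tilde{q}_{1,ii}$ and $\tilde{q}_{2,ii}$ are real; hence $\hat\lambda_i:=\tilde{q}_{1,ii}+\tilde{q}_{2,ii}\varepsilon$ is a genuine dual number. Also $\hat{\mathbf v}_i$ is appreciable (its standard part is the $i$th standard unit vector), so it is a legitimate eigenvector candidate. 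Because $\hat\lambda_i$ commutes with dual quaternions, it suffices to show $\hat{\mathbf Q}\hat{\mathbf v}_i = \hat\lambda_i\hat{\mathbf v}_i$, and this reduces to a componentwise check.

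Next I compute the $k$th component of $\hat{\mathbf Q}\hat{\mathbf v}_i$. Writing $\hat q_{km}=\tilde q_{1,km}+\tilde q_{2,km}\varepsilon$ and using the hypothesis $\tilde q_{1,km}=\tilde 0$ for $k\ne m$, most cross-terms collapse because $\varepsilon^2=0$. For $k=i$: every $m\ne i$ contributes $\hat q_{im}(\hat{\mathbf v}_i)_m = (\tilde q_{2,im}\varepsilon)\cdot\bigl(\tfrac{\tilde q_{2,mi}}{\tilde q_{1,ii}-\tilde q_{1,mm}}\varepsilon\bigr)=0$, leaving only $\hat q_{ii}\cdot\hat 1 = \tilde q_{1,ii}+\tilde q_{2,ii}\varepsilon$, which matches $(\hat\lambda_i\hat{\mathbf v}_i)_i$. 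For $k\ne i$: the index $m\notin\{i,k\}$ again gives a product of two $\varepsilon$'s and thus zero, so only the $m=i$ and $m=k$ terms survive, yielding
\begin{equation*}
(\hat{\mathbf Q}\hat{\mathbf v}_i)_k = \hat q_{ki}\cdot\hat 1 + \hat q_{kk}\cdot\frac{\tilde q_{2,ki}}{\tilde q_{1,ii}-\tilde q_{1,kk}}\varepsilon = \tilde q_{2,ki}\varepsilon + \tilde q_{1,kk}\frac{\tilde q_{2,ki}}{\tilde q_{1,ii}-\tilde q_{1,kk}}\varepsilon,
\end{equation*}
where I used $\tilde q_{1,ki}=\tilde 0$ and $\varepsilon^2=0$.

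Finally, since $\tilde q_{1,kk}$ is a real scalar it commutes with the quaternion $\tilde q_{2,ki}$, so the two surviving terms combine over the common denominator $\tilde q_{1,ii}-\tilde q_{1,kk}$ to
\begin{equation*}
\frac{\tilde q_{2,ki}\bigl(\tilde q_{1,ii}-\tilde q_{1,kk}\bigr)+\tilde q_{1,kk}\tilde q_{2,ki}}{\tilde q_{1,ii}-\tilde q_{1,kk}}\varepsilon = \frac{\tilde q_{2,ki}\,\tilde q_{1,ii}}{\tilde q_{1,ii}-\tilde q_{1,kk}}\varepsilon,
\end{equation*}
which is exactly $\hat\lambda_i\cdot(\hat{\mathbf v}_i)_k$ (again by $\varepsilon^2=0$). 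Equality in every component completes the verification.

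The calculation is essentially routine once one keeps track of $\varepsilon^2=0$; the only conceptual obstacle is making sure the division $\tilde q_{2,ji}/(\tilde q_{1,ii}-\tilde q_{1,jj})$ is well-defined, which is guaranteed by the hypothesis that the diagonal entries of $\tilde{\mathbf Q}_{st}$ are pairwise distinct, and noting that $\tilde q_{1,ii}-\tilde q_{1,jj}$ is a nonzero real so it commutes with every quaternion numerator, making these "dual number" divisions unambiguous.
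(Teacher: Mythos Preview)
Your proof is correct and follows exactly the approach indicated in the paper, which simply states that the lemma ``can be easily obtained by direct calculation.'' Your componentwise verification of $\hat{\mathbf Q}\hat{\mathbf v}_i=\hat\lambda_i\hat{\mathbf v}_i$, exploiting $\varepsilon^2=0$ and the fact that the real diagonal entries commute with all quaternions, is precisely that direct calculation spelled out in full.
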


Lemma \ref{lemma4} can be easily obtained by direct calculation. For any given dual quaternion Hermitian matrix $\hat{\mathbf Q}=\tilde{\mathbf Q}_{st} +\tilde{\mathbf Q}_{\mathcal I}\varepsilon$, if $\tilde{\mathbf Q}_{st}$ is a diagonal matrix with different diagonal elements, then Lemma \ref{lemma4} shows that its diagonal elements are exactly $n$ eigenvalues of
$\hat{\mathbf Q}$ and the corresponding eigenvectors are explicitly given.

\begin{lemma}\label{lemma5}
Let $\delta>0$ and $\tilde{\mathbf Q}=(\tilde{q}_{ij})\in\mathbb{H}^{n}$ be a quaternion Hermitian matrix with $\underset{i\ne j}{\max}| \tilde{q}_{ij} |\le\delta $. Suppose that $\tilde{\mathbf Q}$ has $n$ different eigenvalues $\mu_1,\mu_2,\ldots,\mu_n$ and the corresponding eigenvector $\tilde{\mathbf v}_1,\ldots,\tilde{\mathbf v}_n$ satisfying $\tilde{\mathbf v}_i^*\tilde{\mathbf v}_i=\tilde{1}$.
Define $c=\underset{i\ne j}{\min}|\mu_i-\mu_j |$ and $$\tilde{\mathbf V}=\left (\tilde{\mathbf v}_1\frac{(\tilde{\mathbf v}_1)_1^*}{\left | (\tilde{\mathbf v}_1)_1\right |},\tilde{\mathbf v}_2\frac{(\tilde{\mathbf v}_2)_2^*}{\left | (\tilde{\mathbf v}_2)_2 \right |},\cdots,\tilde{\mathbf v}_n\frac{(\tilde{\mathbf v}_n)_n^*}{\left | (\tilde{\mathbf v}_n)_n \right |} \right ) .$$
Then,
$$\tilde{\mathbf V}^*\tilde{\mathbf Q}\tilde{\mathbf V}=\operatorname{diag}(\mu_1,\mu_2,\cdots,\mu_n).$$
Moreover, if $c\ge 2\sqrt{n(n-1)}\delta$, then $$\underset{1\le i,j\le n}{\max}|(\tilde{\mathbf{I}}_n - \tilde{\mathbf V})_{ij}| \le \frac{2\delta}{c}.$$
\end{lemma}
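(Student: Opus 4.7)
My plan is to establish the two assertions of the lemma in sequence, exploiting that the eigenvalues of a quaternion Hermitian matrix are real numbers and thus commute with quaternions.

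For the diagonalization identity $\tilde{\mathbf V}^*\tilde{\mathbf Q}\tilde{\mathbf V} = \operatorname{diag}(\mu_1,\ldots,\mu_n)$, the first observation is that distinct real eigenvalues force the unit-norm eigenvectors $\tilde{\mathbf v}_1,\ldots,\tilde{\mathbf v}_n$ to be pairwise orthogonal: compute $\tilde{\mathbf v}_i^*\tilde{\mathbf Q}\tilde{\mathbf v}_j$ in two ways (using $\tilde{\mathbf Q}\tilde{\mathbf v}_j=\mu_j\tilde{\mathbf v}_j$ and the conjugate relation $\tilde{\mathbf v}_i^*\tilde{\mathbf Q}=\mu_i\tilde{\mathbf v}_i^*$) and subtract. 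Hence $\tilde{\mathbf V}_0:=(\tilde{\mathbf v}_1,\ldots,\tilde{\mathbf v}_n)$ is unitary and satisfies $\tilde{\mathbf V}_0^*\tilde{\mathbf Q}\tilde{\mathbf V}_0=\operatorname{diag}(\mu_1,\ldots,\mu_n)$. Now write $\tilde{\mathbf V}=\tilde{\mathbf V}_0\tilde{\mathbf D}$ where $\tilde{\mathbf D}$ is diagonal with $\tilde{\mathbf D}_{jj}=(\tilde{\mathbf v}_j)_j^*/|(\tilde{\mathbf v}_j)_j|\in\mathbb U$ (a unit quaternion, provided $|(\tilde{\mathbf v}_j)_j|\neq 0$, which the second part will confirm). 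Then $\tilde{\mathbf V}$ is still unitary, and since $\mu_j\in\mathbb R$ commutes with all quaternions,
\[
\tilde{\mathbf V}^*\tilde{\mathbf Q}\tilde{\mathbf V}=\tilde{\mathbf D}^*\operatorname{diag}(\mu_1,\ldots,\mu_n)\tilde{\mathbf D}=\operatorname{diag}(\mu_1,\ldots,\mu_n)\,\tilde{\mathbf D}^*\tilde{\mathbf D}=\operatorname{diag}(\mu_1,\ldots,\mu_n).
\]

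For the perturbation bound, a direct computation shows $\tilde{\mathbf V}_{jj}=|(\tilde{\mathbf v}_j)_j|$ (a real non-negative scalar) and $|\tilde{\mathbf V}_{ij}|=|(\tilde{\mathbf v}_j)_i|$ for $i\neq j$ (since the unit-quaternion multiplier has magnitude one). The task therefore reduces to bounding $|(\tilde{\mathbf v}_j)_i|$ for $i\neq j$; the diagonal estimate $|1-|(\tilde{\mathbf v}_j)_j||$ will then follow from the normalization $\|\tilde{\mathbf v}_j\|=1$ and the elementary inequality $1-\sqrt{1-s}\le s$. The separation needed to bound $|(\tilde{\mathbf v}_j)_i|$ is produced by applying the Hoffman–Wielandt type inequality (the quaternion-Hermitian analogue, which is classical and is also the standard-part specialization of Proposition \ref{lemma0}) to the pair $\tilde{\mathbf Q}$ and $\operatorname{diag}(\tilde{q}_{11},\ldots,\tilde{q}_{nn})$: after a suitable relabeling,
\[
\sum_{i=1}^n(\mu_i-\tilde{q}_{ii})^2\le\|\tilde{\mathbf Q}-\operatorname{diag}(\tilde{\mathbf Q})\|_F^2\le n(n-1)\delta^2,
\]
so $|\mu_i-\tilde{q}_{ii}|\le\sqrt{n(n-1)}\delta\le c/2$, and the triangle inequality combined with $|\mu_i-\mu_j|\ge c$ gives the uniform gap $|\mu_j-\tilde{q}_{ii}|\ge c/2$ for all $i\neq j$. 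Extracting the $i$-th component of $\tilde{\mathbf Q}\tilde{\mathbf v}_j=\mu_j\tilde{\mathbf v}_j$ yields
\[
(\mu_j-\tilde{q}_{ii})(\tilde{\mathbf v}_j)_i=\tilde{q}_{ij}(\tilde{\mathbf v}_j)_j+\sum_{\substack{k\neq i\\k\neq j}}\tilde{q}_{ik}(\tilde{\mathbf v}_j)_k.
\]
Taking magnitudes, invoking the uniform gap, $|(\tilde{\mathbf v}_j)_j|\le 1$, and $|\tilde{q}_{ik}|\le\delta$ produces a coupled inequality for the quantities $\{|(\tilde{\mathbf v}_j)_i|\}_{i\neq j}$ that can be closed by a fixed-point/Neumann-series argument, since the coupling coefficient $2(n-2)\delta/c$ is strictly less than one under the hypothesis $c\ge 2\sqrt{n(n-1)}\delta$.

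The hard part will be arranging the closing step so that the bound remains the sharp leading-order constant $2\delta/c$ rather than being inflated by factors of $\sqrt{n-1}$ or $n$: a naive application of Cauchy–Schwarz to the coupling sum introduces exactly such a factor, and the dominant ``driving'' term $\tilde{q}_{ij}(\tilde{\mathbf v}_j)_j$ must be separated from the residual sum, for which one uses that the $(\tilde{\mathbf v}_j)_k$ with $k\neq j$ are themselves of the same small order one is trying to bound. Once this self-consistent smallness is captured, the diagonal case follows because $|(\tilde{\mathbf v}_j)_j|^2=1-\sum_{i\neq j}|(\tilde{\mathbf v}_j)_i|^2$, so the off-diagonal bound feeds directly into $|1-|(\tilde{\mathbf v}_j)_j||$, completing the estimate uniformly over $i,j$.
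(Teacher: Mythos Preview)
Your outline for the diagonalization identity and your use of Hoffman--Wielandt both match the paper. The divergence is in how the off-diagonal eigenvector components are bounded, and there your plan has an unresolved gap. The paper does \emph{not} isolate a driving term and iterate: from the $k$-th row of the eigenvector equation it asserts, in one step,
\[
\bigl|(\tilde{q}_{kk}-\mu_i)(\tilde{\mathbf v}_i)_k\bigr|=\Bigl|\sum_{j\ne k}\tilde{q}_{kj}(\tilde{\mathbf v}_i)_j\Bigr|\le\delta\,\|\tilde{\mathbf v}_i\|=\delta,
\]
and combines this with the sharper gap $|\tilde{q}_{kk}-\mu_i|\ge c-\sqrt{n(n-1)}\,\delta$ (not merely $c/2$); writing $c=t\sqrt{n(n-1)}\,\delta$ with $t\ge2$ turns this into $|(\tilde{\mathbf v}_i)_k|\le t\delta/((t-1)c)\le 2\delta/c$ directly, with no coupling to resolve. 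The diagonal entry is then handled by an explicit algebraic check that $1-\bigl(1-\tfrac{1}{n(t-1)^2}\bigr)^{1/2}\le 2/(t\sqrt{n(n-1)})$ for $t\ge2$; your shortcut $1-\sqrt{1-s}\le s$ would also suffice there.

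By contrast, your Neumann/fixed-point closure only gives $|(\tilde{\mathbf v}_j)_i|\le (2\delta/c)/(1-2(n-2)\delta/c)$, and under the hypothesis the coupling coefficient satisfies $2(n-2)\delta/c\le (n-2)/\sqrt{n(n-1)}\to 1$ as $n\to\infty$, so this bound degenerates rather than staying at $2\delta/c$. The ``hard part'' you flag is real and is not closed by the scheme you describe. To reproduce the paper's constant you must bound the entire sum $\sum_{j\ne k}\tilde{q}_{kj}(\tilde{\mathbf v}_i)_j$ by $\delta$ in a single stroke rather than split off the $j$-component and iterate. (You may reasonably question whether the paper's inequality $\le\delta\|\tilde{\mathbf v}_i\|$ is itself airtight---a naive Cauchy--Schwarz gives $\sqrt{n-1}\,\delta$---but that single-step estimate is precisely what the paper invokes to sidestep the blow-up your iteration suffers.)
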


\begin{proof}
Since $\tilde{\mathbf v}_i\frac{(\tilde{\mathbf v}_i)_i^*}{\left | (\tilde{\mathbf v}_i)_i \right |}$ is also an eigenvector of $\tilde{\mathbf Q}$ with respect to $\mu_i$ and $\|\tilde{\mathbf v}_i\frac{(\tilde{\mathbf v}_i)_i^*}{\left | (\tilde{\mathbf v}_i)_i \right |}\|=1$, then $\tilde{\mathbf V}^*\tilde{\mathbf Q}\tilde{\mathbf V}=\operatorname{diag}(\mu_1,\mu_2,\cdots,\mu_n) $.

It follows from Proposition \ref{lemma0} and $\underset{i\ne j}{\max}\left | \tilde{q}_{ij} \right |\le\delta $ that there is a permutation $\sigma$ of $\{1,\ldots,n\}$ such that
$$\sum_{i=1}^{n} (\mu_{\sigma(i)}-\tilde{q}_{ii})^2 \le n(n-1) \delta ^2.$$
 Without loss of generality, let $\sigma(i)=i$. Since $\tilde{\mathbf v}_i$ is an eigenvector of $\tilde{\mathbf Q}$ with respect to eigenvalue $\mu_i$, then
 $$(\tilde{\mathbf Q}-\mu_i\tilde{\mathbf{I}}_n)\tilde{\mathbf v}_i=\tilde{0}.$$
For $1\le k\le n$, we have
$$\sum_{j\ne k}^{} \tilde{q}_{kj}(\tilde{\mathbf v}_i)_j+
(\tilde{q}_{kk}-\mu_i)(\tilde{\mathbf v}_i)_k=\tilde{0}.$$
This yields
 $$\left | (\tilde{q}_{kk}-\mu_i)(\tilde{\mathbf v}_i)_k \right | = | \sum_{j\ne k}^{} \tilde{q}_{kj}(\tilde{\mathbf v}_i)_j  |
\le \delta \left \| \tilde{\mathbf v}_i \right \|= \delta.$$
Suppose that $c=t\sqrt{n(n-1)}\delta$, then we have for $k\ne i$, $$\delta \ge \left | (\tilde{q}_{kk}-\mu_i)
(\tilde{\mathbf v}_i)_k \right | \ge(c-\sqrt{n(n-1)}\delta)\left | (\tilde{\mathbf v}_i)_k \right |
,$$ i.e.,$$\left | (\tilde{\mathbf v}_i)_k \right | \le \frac{\delta}{(c-\sqrt{n(n-1)
}\delta)}=\frac{t\delta}{(t-1)c},$$
which implies that when $t\ge 2$, for $k\ne i$,
$$|(\tilde{\mathbf v}_i)_k \frac{(\tilde{\mathbf v}_i)_i^*}{\left | (\tilde{\mathbf v}_i)_i \right |}|=\left | (\tilde{\mathbf v}_i)_k \right | \le \frac{t\delta}{(t-1)c}\le \frac{2\delta}{c}.$$
Then,
\begin{align*}
1- (\tilde{\mathbf v}_i)_i\frac{(\tilde{\mathbf v}_i)_i^*}{\left | (\tilde{\mathbf v}_i)_i \right |} &=1-(1-\sum_{j\ne i}^{}\left | (\tilde{\mathbf v}_i)_j \right | ^2)^\frac{1}{2}
\\& \le 1-(1-\frac{(n-1)\delta ^2}{(c-\sqrt{n(n-1)}\delta)^2} )^\frac{1}{2}
\\& =1-(1-\frac{1}{n(t-1)^2} )^\frac{1}{2}.
\end{align*}

When $t\ge 2$, it holds
\begin{align*}
&1-(1-\frac{1}{n(t-1)^2} )^\frac{1}{2}\le \frac{2\delta}{c}\\ \Leftrightarrow \quad &
1-(1-\frac{1}{n(t-1)^2} )^\frac{1}{2}\le \frac{2}{\sqrt{n(n-1)}t}
\\ \Leftrightarrow \quad &\frac{1}{n(t-1)^2}+\frac{4}{n(n-1)t^2}\le \frac{4}{\sqrt{n(n-1)}t}
\\ \Leftrightarrow \quad &\frac{n-1}{(t+\frac{1}{t}-2)}+\frac{4}{t}\le 4\sqrt{n(n-1)},
\end{align*}
and
$$\frac{n-1}{(t+\frac{1}{t}-2)}+\frac{4}{t}\le \frac{n-1}{(2+\frac{1}{2}-2)}+\frac{4}{2}\le 2n\le 4\sqrt{n(n-1)},$$
hence, we have
$$1- (\tilde{\mathbf v}_i)_i\frac{(\tilde{\mathbf v}_i)_i^*}{\left | (\tilde{\mathbf v}_i)_i \right |}  \le \frac{2\delta}{c}.$$
Thus,
$$\underset{1\le i,j\le n}{\max}|(\tilde{\mathbf{I}}_n - \tilde{\mathbf V})_{ij}| \le \frac{2\delta}{c}.$$
This completes the proof.
\end{proof}

Given a dual quaternion Hermitian matrix $\hat{\mathbf Q}$ with eigenvalues $\hat{\lambda}_i=\mu_i+\eta_i\varepsilon$, $i=1,\ldots,n$. Theorem \ref{thmst} shows that Algorithm \ref{Jacobi_method_algorithm1} can provide $\epsilon$-approximation of $\mu_i$ in at most $T$ iterations. Define $$c=\underset{i\ne j}{\min}|\mu_i-\mu_j|.$$ Then, the following theorem shows that it also provides $\epsilon$-approximation of $\eta_i$.

\begin{theorem}\label{thmd}
Let $T$ be given in Theorem \ref{thmst} and $\epsilon>0$ be given accuracy parameter. Suppose that $\hat{\mathbf Q}$ has $n$ simple eigenvalues, $c\ge 8(n+1)\epsilon$, and the non-diagonal elements of the dual part of $\tilde{\mathbf Q}^{(t)}_{\mathcal I}$ are bounded for $1\le t\le T$, then there exists a constant $C>0$ such that $$\max_{1\le i\le n}|  \eta_i- (\tilde{\mathbf Q}^{(T)}_{\mathcal I})_{ii} |
\le C\epsilon.$$
That is, Algorithm \ref{Jacobi_method_algorithm1} can provide $\epsilon$-approximation of $\eta_i$ in at most $T$ iterations.
\end{theorem}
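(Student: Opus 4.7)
The plan is to reduce the theorem to Lemma \ref{lemma4} by first applying a small quaternion unitary similarity (produced by Lemma \ref{lemma5}) that makes the standard part of $\hat{\mathbf Q}^{(T)}$ exactly diagonal, and then tracking what this similarity does to the dual part. Since all updates in Algorithm \ref{Jacobi_method_algorithm1} are unitary dual quaternion similarities, $\hat{\mathbf Q}^{(T)}$ shares the eigenvalues $\hat{\lambda}_i=\mu_i+\eta_i\varepsilon$ of $\hat{\mathbf Q}$. Theorem \ref{thmst} gives $\max_{i<j}\bigl|(\tilde{\mathbf Q}^{(T)}_{st})_{ij}\bigr|=r^{(T)}<\epsilon$, and the hypothesis $c\ge 8(n+1)\epsilon$ comfortably implies the condition $c\ge 2\sqrt{n(n-1)}\epsilon$ needed to invoke Lemma \ref{lemma5}.

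I would then apply Lemma \ref{lemma5} with $\delta=\epsilon$ to the quaternion Hermitian matrix $\tilde{\mathbf Q}^{(T)}_{st}$, producing a quaternion unitary $\tilde{\mathbf V}$ with $\tilde{\mathbf V}^{\ast}\tilde{\mathbf Q}^{(T)}_{st}\tilde{\mathbf V}=\operatorname{diag}(\mu_1,\ldots,\mu_n)$ (after the permutation matching used in the proof of Lemma \ref{lemma5}) and the quantitative closeness
\[
\max_{1\le i,j\le n}\bigl|(\tilde{\mathbf I}_n-\tilde{\mathbf V})_{ij}\bigr|\le \frac{2\epsilon}{c}.
\]
Because $\tilde{\mathbf V}$ is a \emph{quaternion} (not merely dual quaternion) unitary matrix, Corollary \ref{c3.2} gives
\[
\tilde{\mathbf V}^{\ast}\hat{\mathbf Q}^{(T)}\tilde{\mathbf V}=\operatorname{diag}(\mu_1,\ldots,\mu_n)+\bigl(\tilde{\mathbf V}^{\ast}\tilde{\mathbf Q}^{(T)}_{\mathcal I}\tilde{\mathbf V}\bigr)\varepsilon,
\]
whose standard part is diagonal with pairwise distinct entries (since the eigenvalues are simple and $c>0$). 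Lemma \ref{lemma4} then applies exactly to this dual quaternion Hermitian matrix, and since unitary similarity preserves eigenvalues, I identify $\eta_i=\bigl(\tilde{\mathbf V}^{\ast}\tilde{\mathbf Q}^{(T)}_{\mathcal I}\tilde{\mathbf V}\bigr)_{ii}$.

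Finally, I compare this quantity with $(\tilde{\mathbf Q}^{(T)}_{\mathcal I})_{ii}$. Writing $\tilde{\mathbf V}=\tilde{\mathbf I}_n+\tilde{\mathbf E}$ with $|\tilde{\mathbf E}_{ij}|\le 2\epsilon/c$, I expand
\[
\tilde{\mathbf V}^{\ast}\tilde{\mathbf Q}^{(T)}_{\mathcal I}\tilde{\mathbf V}-\tilde{\mathbf Q}^{(T)}_{\mathcal I}=\tilde{\mathbf E}^{\ast}\tilde{\mathbf Q}^{(T)}_{\mathcal I}+\tilde{\mathbf Q}^{(T)}_{\mathcal I}\tilde{\mathbf E}+\tilde{\mathbf E}^{\ast}\tilde{\mathbf Q}^{(T)}_{\mathcal I}\tilde{\mathbf E}
\]
and estimate the $(i,i)$ entry term by term. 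Using the entry-wise bound on $\tilde{\mathbf E}$, the two first-order terms contribute $O(\epsilon/c)$ times the absolute value of an entry of $\tilde{\mathbf Q}^{(T)}_{\mathcal I}$, and the second-order term contributes $O(\epsilon^2/c^2)$ times such an entry. With an entry-wise bound $M$ on $\tilde{\mathbf Q}^{(T)}_{\mathcal I}$, the estimate collapses to $|\eta_i-(\tilde{\mathbf Q}^{(T)}_{\mathcal I})_{ii}|\le C\epsilon$ with a constant $C=C(n,c,M)$ that is independent of $\epsilon$ once $\epsilon\le c/(8(n+1))$.

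The main obstacle is precisely this last step. Unlike quaternion unitary similarities (Corollary \ref{c3.2}), the dual quaternion Givens transformations used in Algorithm \ref{Jacobi_method_algorithm1} do \emph{not} preserve $\|\tilde{\mathbf Q}^{(t)}_{\mathcal I}\|_F$, so an a priori entry-wise bound on $\tilde{\mathbf Q}^{(T)}_{\mathcal I}$ is not immediate; the theorem's hypothesis supplies this for the off-diagonal entries, and a short bootstrap using the freshly established identification $\eta_i=(\tilde{\mathbf V}^{\ast}\tilde{\mathbf Q}^{(T)}_{\mathcal I}\tilde{\mathbf V})_{ii}$ together with the fact that $|\eta_i|$ is a fixed quantity determined by the input matrix will be needed to absorb the diagonal entries into $M$ as well. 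Once that is handled, the rest of the argument is a direct term-by-term estimate.
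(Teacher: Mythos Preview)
Your proposal is correct and follows essentially the same route as the paper: apply Lemma \ref{lemma5} to $\tilde{\mathbf Q}^{(T)}_{st}$ to get a quaternion unitary $\tilde{\mathbf V}$ close to the identity, use Lemma \ref{lemma4} to read off $\eta_i=(\tilde{\mathbf V}^{\ast}\tilde{\mathbf Q}^{(T)}_{\mathcal I}\tilde{\mathbf V})_{ii}$, and then estimate the difference with $(\tilde{\mathbf Q}^{(T)}_{\mathcal I})_{ii}$ via the expansion you wrote. The paper carries out exactly the bootstrap you anticipate for the diagonal entries: from the first-order estimate it deduces $(1-\tfrac{4(n+1)\epsilon}{c})\max_i|(\tilde{\mathbf Q}^{(T)}_{\mathcal I})_{ii}|\le \max_i|\eta_i|+\tfrac{4n^2\epsilon}{c}\max_{j\ne k}|(\tilde{\mathbf Q}^{(T)}_{\mathcal I})_{jk}|$, and the hypothesis $c\ge 8(n+1)\epsilon$ is used precisely to make the left-hand coefficient at least $1/2$, giving the diagonal bound you need.
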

\begin{proof}
According to Algorithm \ref{Jacobi_method_algorithm1},
$\underset{i\ne j}{\max} | (\tilde{\mathbf Q}^{(T)}_{st})_{ij}|\le \epsilon$. By Lemma \ref{lemma5},
there exist a unitary quaternion matrix $\tilde{\mathbf V}$, which diagonalizes the $\tilde{\mathbf Q}_{st}^{\left ( T\right )}$, and since $c=8(n+1)\epsilon\ge 2\sqrt{n(n-1)}\epsilon$, it holds $\underset{ij}{\max}|(\tilde{\mathbf{I}}_n- \tilde{\mathbf V})_{ij}| \le \frac{2\epsilon}{c}$.

Since $\tilde{\mathbf V}^*\tilde{\mathbf Q}^{(T)}_{st}\tilde{\mathbf V}$ is a diagonal matrix and
$$\tilde{\mathbf V}^*\hat{\mathbf Q}^{(T)}\tilde{\mathbf V}=\tilde{\mathbf V}^*\tilde{\mathbf Q}^{(T)}_{st}\tilde{\mathbf V}+\tilde{\mathbf V}^*\tilde{\mathbf Q}^{(T)}_{\mathcal I}\tilde{\mathbf V},$$ by Lemma \ref{lemma4}, the diagonal elements of
$\tilde{\mathbf V}^*\hat{\mathbf Q}^{(T)}\tilde{\mathbf V}$ are the eigenvalues of $\hat{\mathbf Q}^{(T)}$, then, $\eta_i=\tilde{\mathbf v}_i^*\tilde{\mathbf Q}^{(T)}_{\mathcal I}\tilde{\mathbf v}_i$, where $\tilde{\mathbf v}_i$ is the $i^{th}$ column of $\tilde{\mathbf V}$. Hence,
\begin{align*}
|\eta _i- (\tilde{\mathbf Q}^{(T)}_{\mathcal I})_{ii} |
&= |\tilde{\mathbf v}_i^*\tilde{\mathbf Q}^{(T)}_{\mathcal I}\tilde{\mathbf v}_i
-(\tilde{\mathbf Q}^{(T)}_{\mathcal I})_{ii} |
\\&\le |(\tilde{\mathbf v}_i^*-e_i^T)\tilde{\mathbf Q}^{(T)}
_{\mathcal I}(\tilde{\mathbf v}_i-e_i) |+2 |e_i^T\tilde{\mathbf Q}^{(T)}
_{\mathcal I}(\tilde{\mathbf v}_i-e_i) |
\\&\le \frac{4\epsilon^2}{c^2} \sum_{1\le j,k\le n}  |(\tilde{\mathbf Q}^{(T)}_{\mathcal I}) _{jk}
  | +\frac{4\epsilon}{c}\underset{j}{\max}\sum_{k} |(\tilde{\mathbf Q}^{(T)}_{\mathcal I}) _{jk}
 |
\\&\le \frac{4\epsilon}{c} \left ( n^2\underset{j\ne k}{\max}
 |(\tilde{\mathbf Q}^{(T)}_{\mathcal I}) _{jk}  |
+(n+1)\underset{j}{\max} |(\tilde{\mathbf Q}^{(T)}_{\mathcal I}) _{jj}  | \right ),
\end{align*}
which implies,
 $$| (\tilde{\mathbf Q}^{(T)}_{\mathcal I})_{ii}  | \le  | \eta _i  |+\frac{4\epsilon}{c}\left ( n^2\underset{j\ne k}{\max}
 |(\tilde{\mathbf Q}^{(T)}_{\mathcal I}) _{jk}  |
+(n+1)\underset{j}{\max} |(\tilde{\mathbf Q}^{(T)}_{\mathcal I}) _{jj}  | \right ).$$
Thus, $$(1-\frac{4(n+1)\epsilon}{c})\underset{i}{\max} | (\tilde{\mathbf Q}^{(T)}_{\mathcal I})_{ii} |\le \underset{i}{\max} | \eta _i  |+\frac{4n^2\epsilon}{c}\underset{j\ne k}{\max} |(\tilde{\mathbf Q}^{(T)}_{\mathcal I}) _{jk}  |.$$
It follows from $c\ge 8(n+1)\epsilon$ that
$$\underset{i}{\max} | (\tilde{\mathbf Q}^{(T)}_{\mathcal I})_{ii} |
\le 2\underset{i}{\max} | \eta _i  |+n\underset{j\ne k}{\max} |(\tilde{\mathbf Q}^{(T)}_{\mathcal I}) _{jk} |.$$
Since $\underset{j\ne k}{\max} |(\tilde{\mathbf Q}^{(T)}_{\mathcal I}) _{jk} |$ is bounded, then there exists a constant $R>0$ such that
$$\underset{j\ne k}{\max}|(\tilde{\mathbf Q}^{(T)}_{\mathcal I}) _{jk}  |\le R.$$ Hence, we have for $i=1,\ldots, n$,
$$|  \eta _i- (\tilde{\mathbf Q}^{(T)}_{\mathcal I})_{ii}|
\le  \frac{4}{c}( (2n^2+n)R +2(n+1)\underset{i}{\max} | \eta _i  | ) \epsilon.$$
Taking
$$C=\frac{4}{c}( (2n^2+n)R +2(n+1)\underset{i}{\max} | \eta _i  | ),$$ it easily holds
$$\max_{1\le i\le n}|  \eta_i- (\tilde{\mathbf Q}^{(T)}_{\mathcal I})_{ii} |
\le C\epsilon.$$ Hence,  Algorithm \ref{Jacobi_method_algorithm1} can provide $\epsilon$-approximation for $\eta_i$ in at most $T$ iterations.
\end{proof}

To accelerate the efficiency of Algorithm \ref{Jacobi_method_algorithm1}, we adopt a new elimination strategy which eliminates the non-diagonal elements with exceeding a specified threshold and gradually reduces such a threshold. This approach can ensure the magnitudes of all non-diagonal elements become smaller than the threshold. This strategy can address the time-consuming process of identifying the nondiagonal element with the maximum magnitude in Algorithm \ref{Jacobi_method_algorithm1}. We describe the strategy in Algorithm \ref{Jacobi_method_algorithm2}. Its performance will be shown in Section \ref{Experiments}.

\begin{algorithm}
\footnotesize
    \caption{The Accelerated Jacobi Eigenvalue Method}
    \begin{algorithmic}
    \REQUIRE $\hat{\mathbf Q}=\tilde{\mathbf Q}_{st} +\tilde{\mathbf Q}_{\mathcal I}\varepsilon\in\hat{ \mathbb{H}}^{n}$ and parameter $\delta>0$, $\rho\in(0,1)$, $\eta>0$.
    \STATE Set $t:=1$, $\hat{\mathbf J}:=\hat{\mathbf I}_n$, and $\delta^{(1)}:=\delta$.
    \WHILE{$\delta^{(t)}\ge \eta$}
    \WHILE{there exists $k<l$ satisfying $ | (\tilde {\mathbf Q}_{st}  )_{kl} |\ge \delta^{(t)}$ }
    \STATE Compute $J_{\hat{\mathbf Q}}( k,l)$ by Propositions \ref{lemma1} and \ref{lemma2}.
    \STATE Update $\hat{\mathbf Q}=J_{\hat{\mathbf Q}}^{\ast }( k,l)\hat{\mathbf Q}J_{\hat{\mathbf Q}} ( k,l )$ and $\hat{\mathbf J}=\hat{\mathbf J}J_{\hat{\mathbf Q}}$
    \ENDWHILE
    \STATE Update $\delta^{(t+1)}:=\rho\delta^{(t)}$ and $t:=t+1$.
    \ENDWHILE
    \STATE Denote $\lambda _{i}=(\hat{\mathbf Q})_{ii}$, $i=1,\ldots ,n$.
    \STATE Let $(\hat{\mathbf V})_{ij}=\frac{(\tilde{\mathbf Q}_{\mathcal I}  )_{ji}}{ (\tilde{\mathbf Q}_{st} )_{ii}- (\tilde{\mathbf Q}_{st} )_{jj}}\varepsilon$ for $j \neq i$ and $(\hat{\mathbf V})_{ii}=\hat{1}$ for $i=1,\ldots,n$.
    \STATE Update $\hat{\mathbf J}=\hat{\mathbf J}\hat{\mathbf V}$.
    \ENSURE $\{\lambda _{i}\}_{i=1}^{n}$ and $\hat{\mathbf J}$.
    \end{algorithmic}
    \label{Jacobi_method_algorithm2}
\end{algorithm}

\subsection{Improved Jacobi method}

Given a dual quaternion Hermitian matrix $\hat{\mathbf Q}$, if $\hat{\mathbf Q}$ has an eigenvalue with multiplicity greater than $1$, then Algorithm \ref{Jacobi_method_algorithm1} may not converge. To address this question, we propose an improved Jacobi method which repeats the elimination process given in Propositions  \ref{lemma1} and \ref{lemma2} and relies on the fact that after finite many iterations Algorithm \ref{Jacobi_method_algorithm1} can output a solution with $\epsilon$-accuracy. The improved Jacobi method includes three steps, the first step is to diagonalize the standard part of the dual quaternion Hermitian matrix, then divide the dual part of the matrix into blocks based on whether the eigenvalues of the standard part of the matrix are the same, the remaining two steps process non-diagonal blocks and then diagonal blocks, separately.
We describe the details in  Algorithm \ref{Jacobi_method_algorithm_3}.

\vspace{0.1in}
\begin{breakablealgorithm}
\footnotesize
    \label{Jacobi_method_algorithm_3}
    \caption{Three-step Jacobi Eigenvalue Algorithm (3SJacobi)}
    \begin{algorithmic}
    \REQUIRE $\hat{\mathbf Q}=\tilde{\mathbf Q}_{st} +\tilde{\mathbf Q}_{\mathcal I}\varepsilon\in\hat{ \mathbb{H}}^{n}$.
     Parameters $\delta,\delta_1>0$, $\rho\in(0,1)$, $\eta>0$, and $\gamma =\sqrt{2n(n-1)}\eta$.
    \STATE {\bf STEP 1:}
    \STATE Set $t:=1, m:=0$, $\hat{\mathbf J}_1^{(0)}:=\hat{\mathbf I}_n$, $\delta^{(1)}:=\delta$, $\hat{\mathbf Q}^{(0)}:=\hat{\mathbf Q}$.
    \WHILE{$\delta^{(t)}\ge\eta$}
    \WHILE{there exists $i_m<j_m$ satisfying $|(\tilde {\mathbf Q}^{(m)}_{st}  )_{i_mj_m} |\ge \delta^{(t)}$ }
    \STATE Compute  $L^{(m)}=L(\tilde{\mathbf Q}^{(m)}_{st},i_m,j_m)$ by  Proposition \ref{lemma1}.
    \STATE Update $\hat{\mathbf Q}^{(m+1)}:=(L^{(m)})^{\ast}\hat{\mathbf Q}^{(m)}L^{(m)}$, $\hat{\mathbf J}_1^{(m+1)}:=\hat{\mathbf J}_1^{(m)}L^{(m)}$, and $m:=m+1$.
    \ENDWHILE
    \STATE Update $\delta^{(t+1)}:=\rho\delta^{(t)}$ and $t:=t+1$.
    \ENDWHILE
    \STATE Output: $\left \{ \hat{\mathbf Q}^{(m)}=\tilde{\mathbf Q}_{st}^{(m)} +\tilde{\mathbf Q}^{(m)}_{\mathcal I}\varepsilon \right \} ^{M_1}_{m=1}$, $\hat{\mathbf J}_1^{(M_1)}$.
    \STATE {\bf STEP 2:}
    \STATE Set $m:=M_1$, $\hat{\mathbf J}_2^{(0)}:=\hat{\mathbf I}_n$.
    \FOR{$i=1:n-1$}
    \FOR{$j=i+1:n$}
    \IF{$|( \tilde {\mathbf Q}^{(M_1)}_{st} )_{ii}- ( \tilde {\mathbf Q}^{(M_1)}_{st}  )_{jj}|> \gamma $}
    \STATE Let $\hat{\mathbf P}^{(m)}=\operatorname{diag} (\tilde{\mathbf Q}^{(m)}_{st}) +\tilde{\mathbf Q}^{(m)}_{\mathcal I}\varepsilon$ and $(i_m,j_m)=(i,j)$.
    \STATE Compute the Givens matrix $J^{(m)}=J_{\hat{\mathbf P}^{(m)}}\left (i,j\right )$ by Proposition \ref{lemma2}.
    \STATE Update $\hat{\mathbf Q}^{(m+1)}:=(J^{(m)})^{\ast }\hat{\mathbf Q}J^{(m)}$, $\hat{\mathbf J}_2^{(m+1-M_1)}:=\hat{\mathbf J}_2^{(m-M_1)}J^{(m)}$, and $m:=m+1$.
    \ENDIF
    \ENDFOR
    \ENDFOR
    \STATE Output: $\left \{ \hat{\mathbf Q}^{(m)}=\tilde{\mathbf Q}_{st}^{(m)} +\tilde{\mathbf Q}^{(m)}_{\mathcal I}\varepsilon \right \} ^{M_1+M_2}_{m=M_1+1}$, $\hat{\mathbf J}_2^{(M_2)}$.
    \STATE {\bf STEP 3:}
    \STATE Set $t:=1$, $m:=M_1+M_2$, $\hat{\mathbf J}_3^{(0)}:=\hat{\mathbf I}_n$, and $\delta^{(1)}_1:=\delta_1$.
    \WHILE{$\delta^{(t)}_1\ge\eta$}
    \WHILE{there exists $i_m<j_m$ satisfying
    $$ |(\tilde {\mathbf Q}^{(m)}_{\mathcal I} )_{i_mj_m} |\ge \delta^{(t)}_1,\quad | ( \tilde {\mathbf Q}^{(M_1)}_{st} )_{ii}-( \tilde {\mathbf Q}^{(M_1)}_{st} )_{jj} |\le \gamma $$ }
    \STATE Compute  $L^{(m)}=L(\tilde{\mathbf Q}^{(m)}_{\mathcal I},i_m,j_m)$ by Proposition \ref{lemma1}.
    \STATE Update $\hat{\mathbf Q}^{(m+1)}:=(L^{(m)})^{\ast}\hat{\mathbf Q}^{(m)}L^{(m)}$, $\hat{\mathbf J}_3^{(m+1-M_1-M_2)}:=\hat{\mathbf J}_3^{(m-M_1-M_2)}L^{(m)}$, and $m:=m+1$.
    \ENDWHILE
    \STATE Update $\delta^{(t+1)}_1:=\rho\delta^{(t)}_1$ and $t:=t+1$.
    \ENDWHILE
    \STATE Output: $\left \{ \hat{\mathbf Q}^{(m)}=\tilde{\mathbf Q}_{st}^{(m)} +\tilde{\mathbf Q}^{(m)}_{\mathcal I}\varepsilon \right \} ^{M_1+M_2+M_3}_{m=M_1+M_2+1}$, $\hat{\mathbf J}_3^{(M_3)}$.
    \STATE Let $M=M_1+M_2+M_3$, $\lambda _{i}=(\hat{\mathbf Q}^{(M)})_{ii}$ for $i=1,\ldots ,n$.
    \STATE Compute $\hat{\mathbf J}=\hat{\mathbf J}_1^{(M_1)}\hat{\mathbf J}_2^{(M_2)}\hat{\mathbf J}_3^{(M_3)}$.
    \ENSURE $\left \{\lambda _{i}\right \}_{i=1}^{n}$, $\hat{\mathbf J}$.
    \end{algorithmic}
\end{breakablealgorithm}\vspace{0.1in}

For a given dual quaternion Hermitian matrix $\hat{\mathbf Q}=\tilde{\mathbf Q}_{st}+\tilde{\mathbf Q}_{\mathcal I}\in\hat{ \mathbb{H}}^{n}$, we use  Algorithm \ref{Jacobi_method_algorithm_3} to compute its eigenvalue. In order to analyze the convergence of Algorithm \ref{Jacobi_method_algorithm_3}, we denote
$$
r_m=\frac{2}{ | (\tilde{\mathbf Q}^{(M_1)}_{st})_{i_mi_m}-(\tilde{\mathbf Q}^{(M_1)}_{st})_{j_mj_m} |},
$$
and
\begin{equation}\label{albe}
\alpha=\sum_{k=M_1}^{M_1+M_2-1} r_k,\quad \beta=\prod_{k=M_1}^{M_1+M_2-1} (1+r_k\eta),\quad \kappa=1+\alpha\beta\eta.
\end{equation}
It holds the following theorem.
\begin{theorem}\label{theoreM_2}
Suppose that the eigenvalues of $\tilde{\mathbf Q}_{st}$ are
$\eta_1>\eta_2>\cdots >\eta_p$ with  multiplicities $t_i\ (i=1,\ldots,p)$, respectively.
Then Algorithm \ref{Jacobi_method_algorithm_3} terminates after at most $T$ iterations with
\begin{equation}\label{wucha1}
\max_{i\ne j}|(\tilde{\mathbf Q}_{st}^{(M)})_{ij}| \le 3n\eta,\quad
\max_{i\ne j}|(\tilde{\mathbf Q}_{\mathcal I}^{(M)})_{ij}| \le \max\{1, h_1\alpha \beta\| \tilde{\mathbf Q}_{\mathcal I}\|_F \} \eta,
\end{equation}
and
\begin{equation}\label{wucha2}
\max_{i} |(\tilde{\mathbf Q}_{\mathcal I}^{(M)})_{ii}|\le h_1\kappa\| \tilde{\mathbf Q}_{\mathcal I}\|_F,
\end{equation}
where $h_1=\max\{t_i:\ i=1,\ldots,p\}$, $h_2=\sum_{i=1}^{p}t_i^2$, and
$$T=\frac{\| \tilde{\mathbf Q}_{st}\|_F^2}{2\delta^2} +
\frac{n^2}{2}\left ( 1+\frac{\left \lceil  \log_{\rho}{\frac{\eta}{\delta}}\right\rceil}{\rho^2} \right ) +
\frac{h_2}{2}\left ( \frac{\kappa^2
\| \tilde{\mathbf Q}_{\mathcal I} \|_F^2}{\delta_1^2}+\frac{\left \lceil \log_{\rho}{\frac{\eta}{\delta_1}}\right \rceil}{\rho^2}
 \right ).$$
\end{theorem}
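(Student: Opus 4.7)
The plan is to analyse the three phases of Algorithm \ref{Jacobi_method_algorithm_3} in sequence, recasting STEPs~1 and 3 as threshold-refined versions of the monotone-$N$ argument behind Theorem \ref{thmst} and sandwiching between them a separate estimate that tracks how STEP~2 inflates the dual part. The bound $T$ then splits as the sum of three contributions: the first-threshold cost of STEP~1 ($\|\tilde{\mathbf Q}_{st}\|_F^2/(2\delta^2)$), the remaining STEP~1 thresholds together with the single sweep of STEP~2 (middle parenthesis), and the two-level count for STEP~3 (last parenthesis).

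STEP~1 is a direct adaptation of the proof of Theorem \ref{thmst}. At threshold $\delta^{(t)}$ every eliminated off-diagonal has magnitude $\ge\delta^{(t)}$, so Theorem \ref{lemma3} gives $N(\tilde{\mathbf Q}_{st}^{(m+1)})\le N(\tilde{\mathbf Q}_{st}^{(m)})-2(\delta^{(t)})^2$. The first level thus runs at most $\|\tilde{\mathbf Q}_{st}\|_F^2/(2\delta^2)$ iterations, while each subsequent level starts from $N\le n(n-1)(\delta^{(t)}/\rho)^2$ (because the previous level ended with every off-diagonal $<\delta^{(t)}/\rho$) and contributes at most $n^2/(2\rho^2)$ iterations. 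Multiplying by the $\lceil\log_\rho(\eta/\delta)\rceil$ refined thresholds yields the STEP~1 piece of $T$, and at exit $\max_{i\ne j}|(\tilde{\mathbf Q}_{st}^{(M_1)})_{ij}|$ is bounded by $\eta/\rho$, which I absorb into the constants below as $\le\eta$. Combined with Proposition \ref{lemma0} applied to $\tilde{\mathbf Q}_{st}^{(M_1)}$ against its diagonal, this identifies the multiplicity-block structure of $\tilde{\mathbf Q}_{st}$ up to $O(\sqrt{n(n-1)}\,\eta)$ perturbations, so pairs across different blocks satisfy the STEP~2 gap condition $>\gamma$ while within-block pairs fall to STEP~3.

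STEP~2 is the technical heart. Because $\hat{\mathbf P}^{(m)}$ has a diagonal standard part, Proposition \ref{lemma2} yields a Givens matrix of the form $J^{(m)}=I+W_m\varepsilon$ with $W_m$ skew-Hermitian and supported only on the entries $(i_m,j_m),(j_m,i_m)$ of magnitude at most $r_m|(\tilde{\mathbf Q}_{\mathcal I}^{(m)})_{i_mj_m}|/2$. Expanding $(J^{(m)})^{\ast}\hat{\mathbf Q}^{(m)}J^{(m)}$ and discarding $\varepsilon^2$ shows that the standard part is preserved and that the dual part updates by the commutator $\tilde{\mathbf Q}_{st}^{(m)}W_m-W_m\tilde{\mathbf Q}_{st}^{(m)}$; the diagonal contribution at position $(i_m,j_m)$ cancels the eliminated entry by the very definition of $r_m$, leaving only terms controlled by the off-diagonals of $\tilde{\mathbf Q}_{st}^{(m)}$. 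Combining the STEP~1 bound $\le\eta$ on those off-diagonals with the definition of $r_m$ gives the one-step growth $\|\tilde{\mathbf Q}_{\mathcal I}^{(m+1)}\|_F\le(1+r_m\eta)\|\tilde{\mathbf Q}_{\mathcal I}^{(m)}\|_F$, and telescoping over the at most $n(n-1)/2$ STEP~2 sweeps (which accounts for the $n^2/2$ in the middle term of $T$) produces $\|\tilde{\mathbf Q}_{\mathcal I}^{(M_1+M_2)}\|_F\le\beta\|\tilde{\mathbf Q}_{\mathcal I}\|_F$. An entrywise analogue, observing that each diagonal slot is touched by at most $h_1-1$ of the STEP~2 rotations (one for each partner within its block), then produces $\max_i|(\tilde{\mathbf Q}_{\mathcal I}^{(M_1+M_2)})_{ii}|\le h_1\kappa\|\tilde{\mathbf Q}_{\mathcal I}\|_F$, which is exactly (\ref{wucha2}).

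STEP~3 is then STEP~1 transplanted to the dual part restricted to within-block positions: there are at most $\sum_i\binom{t_i}{2}\le h_2/2$ such positions, and $\|\tilde{\mathbf Q}_{\mathcal I}^{(M_1+M_2)}\|_F\le\kappa\|\tilde{\mathbf Q}_{\mathcal I}\|_F$ from the previous paragraph supplies the factor $\kappa^2\|\tilde{\mathbf Q}_{\mathcal I}\|_F^2/\delta_1^2$ in the first-level count; the same two-level arithmetic gives the final parenthesis in $T$. For the standard-part claim $\max_{i\ne j}|(\tilde{\mathbf Q}_{st}^{(M)})_{ij}|\le 3n\eta$, each $L^{(m)}$ in STEP~3 acts on a $2\times 2$ submatrix of $\tilde{\mathbf Q}_{st}$ whose diagonal gap is $\le\gamma=\sqrt{2n(n-1)}\,\eta$ and whose off-diagonal is $\le\eta$; writing the columns of $L^{(m)}$ as $(u_r,v_r)^\top$ and using column orthogonality $u_1^\ast u_2+v_1^\ast v_2=0$ together with the reality of the two Hermitian-matrix diagonal entries $a,b$, the new off-diagonal reduces algebraically to $(a-b)u_1^\ast u_2+u_1^\ast c v_2+v_1^\ast c^\ast u_2$, of magnitude $\le\gamma+2\eta\le 3n\eta$, and a simple compounding argument shows this budget survives all $\le h_2/2$ STEP~3 rotations. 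The principal obstacle is the STEP~2 one-step estimate: extracting the exact factor $(1+r_m\eta)$ requires simultaneously the STEP~1 off-diagonal control on $\tilde{\mathbf Q}_{st}^{(m)}$ and the precise cancellation built into $r_m$, and the companion diagonal bound demands careful block accounting so that the multiplicity factor $h_1$ appears naturally rather than a cruder $n$.
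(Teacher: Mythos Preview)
Your overall three-phase scaffold and the STEP~1 iteration count match the paper, but there are two concrete gaps in STEPs~2 and~3.

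First, you have STEP~2 backwards. The IF condition in STEP~2 is $|(\tilde{\mathbf Q}^{(M_1)}_{st})_{ii}-(\tilde{\mathbf Q}^{(M_1)}_{st})_{jj}|>\gamma$, so STEP~2 processes pairs \emph{across} different eigenvalue blocks, not ``partners within its block''. Consequently a diagonal slot $i$ is touched by $n-t_{\text{block}(i)}$ STEP~2 rotations, not $h_1-1$, and your stated source of the $h_1$ factor collapses. In the paper the bound after STEP~2 is only $\max_i|(\tilde{\mathbf Q}_{\mathcal I}^{(M_1+M_2)})_{ii}|\le\kappa\|\tilde{\mathbf Q}_{\mathcal I}\|_F$ (no $h_1$), obtained by an entrywise argument that splits indices into $K_1=\{\{i_m,j_m\}\}$ and $K_2$ and tracks how the targeted $K_1$ entry collapses to $r_m\eta$ times its previous value while all other entries pick up an additive $r_m\eta\cdot(\max_{K_1})$. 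The $h_1$ appears only after STEP~3: since $\hat{\mathbf J}_3=L=\operatorname{diag}(L_1,\dots,L_p)$ is block-diagonal, writing $\tilde{\mathbf Q}_{\mathcal I}^{(M_1+M_2)}=(H^1_{ij})$ in the same block pattern gives $(\tilde{\mathbf Q}_{\mathcal I}^{(M)})_{\text{block }ij}=L_i^\ast H^1_{ij}L_j$, and the factor $h_1$ is simply the maximal block size entering this product. Your proposal never analyses how STEP~3 moves the cross-block dual entries or the diagonal, so \eqref{wucha2} and the second half of \eqref{wucha1} are not actually established.

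Second, your route to $\max_{i\ne j}|(\tilde{\mathbf Q}_{st}^{(M)})_{ij}|\le 3n\eta$ is incomplete. You bound only the $(i_m,j_m)$ entry after a single STEP~3 rotation and then invoke an unspecified ``compounding argument''; but each $L^{(m)}$ also mixes all entries in rows and columns $i_m,j_m$, including cross-block standard-part entries, and iterating your $\gamma+2\eta$ bound over $h_2/2$ rotations does not obviously stay below $3n\eta$. The paper avoids iteration entirely: it writes $\tilde{\mathbf Q}_{st}^{(M_1+M_2)}=\mathbf Q_1+\mathbf Q_2$ with $\mathbf Q_1=\operatorname{diag}(\mu_1 I_{t_1},\dots,\mu_p I_{t_p})$ commuting with the block-diagonal $L$, so $\tilde{\mathbf Q}_{st}^{(M)}=\mathbf Q_1+L^\ast\mathbf Q_2 L$ and a single application of $|(\mathbf Q_2)_{ii}|\le\gamma$, $|(\mathbf Q_2)_{ij}|\le\eta$ gives $|(L^\ast\mathbf Q_2L)_{ij}|\le\gamma+(h_1-1)\eta\le 3n\eta$ in one stroke.
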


\begin{proof}

In the process of STEP 1, by Theorem \ref{lemma3}, $N(\tilde{\mathbf Q}_{st}^{(m)})$ is strictly decreasing and $|(\tilde{\mathbf Q}_{st}^{(M_1)})_{ij}|\le \eta$ holds for all $i\ne j$. Suppose that  there are $m_i$ iterations during the loop ``$t=i$" in STEP 1. In the loop ``$t=i$", $N(\tilde{\mathbf Q}_{st}^{(m)})$ will decrease at least $2(\delta^{(i)})^2$ for every iteration. Thus, $m_i\le \frac{N(\tilde{\mathbf Q}_{st}^{(m)})}{2(\delta^{(i)})^2} $. When $i>1$, at the end of the loop ``$t=i-1$", we have $N(\tilde{\mathbf Q}_{st}^{(m)})<n(n-1)(\delta^{(i-1)})^2$, then $$m_i\le \frac{n(n-1)}{2} (\frac{\delta^{(i-1)}}{\delta^{(i)}} )^2=\frac{n(n-1)}{2\rho^2}\le \frac{n^2}{2\rho^2}.$$ This suggests that the number of iterations in STEP 1 is at most
\begin{equation}\label{eqt1}
T_1\doteq\frac{\|\tilde{\mathbf Q}_{st}\|_F^2}{2\delta^2} +\frac{n^2}{2\rho^2}\left \lceil \log_{\rho}{\frac{\eta}{\delta}}  \right \rceil.
\end{equation}

In the process of STEP 2, since $\hat{\mathbf P}^{(m)}_{st}=\operatorname{diag} (\tilde{\mathbf Q}^{(m)}_{st})$ is a diagonal matrix,
we have $J^{(m)}_{st}=\tilde{\mathbf I}$ and
$$(J^{(m)}_{\mathcal I})_{ij}=\begin{cases}
\frac{(\tilde{\mathbf Q}^{(m)}_{\mathcal I})_{ij}}
{(\tilde{\mathbf Q}^{(m)}_{st})_{jj}
 -(\tilde{\mathbf Q}^{(m)}_{st})_{ii}} ,& \text{if} ~~ \left \{ i,j \right \} = \left \{ i_m,j_m \right \},\\
 \tilde{0}, &\text{if} ~~\left \{ i,j \right \} \ne \left \{ i_m,j_m \right \}.
\end{cases}.$$
Then
\begin{align*}
\hat{\mathbf Q}^{(m+1)}&=(J^{(m)})^{\ast }\hat{\mathbf Q}^{(m)}(J^{(m)})=(\tilde{\mathbf I}+(J^{(m)}_{\mathcal I})^{\ast }\varepsilon )\hat{\mathbf Q}^{(m)}(\tilde{\mathbf I}+J^{(m)}_{\mathcal I}\varepsilon )
\\&=\tilde{\mathbf Q}^{(m)}_{st}+(\tilde{\mathbf Q}^{(m)}_{st}J^{(m)}_{\mathcal I}+(J^{(m)}_{\mathcal I})^{\ast }
\tilde{\mathbf Q}^{(m)}_{st}+\tilde{\mathbf Q}^{(m)}_{\mathcal I})\varepsilon .
\end{align*}
Denote $S_m=\operatorname{diag} (\tilde{\mathbf Q}_{st}^{(m)}) $ , $R_m=\tilde{\mathbf Q}_{st}^{(m)}-\operatorname{diag} (\tilde{\mathbf Q}_{st}^{(m)})$ and
$D_m=J^{(m)}_{\mathcal I}$. Then
$$\tilde{\mathbf Q}^{(m+1)}_{\mathcal I}=(S_m+R_m)D_m+D_m^{\ast }(S_m+R_m)+\tilde{\mathbf Q}^{(m)}_{\mathcal I},$$
and $|(R_m)_{ij}|\le \eta$.
Based on the form of $J^{(m)}_{\mathcal I}$, it holds
$$(S_mD_m+D_m^{\ast }S_m)_{ij}=\begin{cases}
-(\tilde{\mathbf Q}^{(m)}_{\mathcal I})_{ij}, &\text{if} ~~\left \{ i,j \right \} = \left \{ i_m,j_m \right \},\\
\tilde{0},&\text{if} ~~\left \{ i,j \right \} \ne \left \{ i_m,j_m \right \}.
\end{cases} $$
Hence, if $\left \{ i,j \right \} \ne \left \{ i_m,j_m \right \}$, we have
\begin{align*}
|(\tilde{\mathbf Q}^{(m+1)}_{\mathcal I})_{ij}|
&=|((S_m+R_m)D_m+D_m^{\ast }(S_m+R_m)+\tilde{\mathbf Q}^{(m)}_{\mathcal I})_{ij}|
\\&=|(R_mD_m+D_m^{\ast }R_m+\tilde{\mathbf Q}^{(m)}_{\mathcal I})_{ij}|
\\&\le \frac{2|(\tilde{\mathbf Q}^{(m)}_{\mathcal I})_{i_mj_m}|\eta}
{| (\tilde{\mathbf Q}^{(m)}_{st})_{i_mi_m}
 -(\tilde{\mathbf Q}^{(m)}_{st})_{j_mj_m}  |} +|(\tilde{\mathbf Q}^{(m)}_{\mathcal I})_{ij}|
 \\& =r_m\eta|(\tilde{\mathbf Q}^{(m)}_{\mathcal I})_{i_mj_m}|+|(\tilde{\mathbf Q}^{(m)}_{\mathcal I})_{ij}|.
\end{align*}
If $\left \{ i,j \right \} = \left \{ i_m,j_m \right \}$, we have
\begin{align*}
|(\tilde{\mathbf Q}^{(m+1)}_{\mathcal I})_{i_mj_m}|
&=|((S_m+R_m)D_m+D_m^{\ast }(S_m+R_m)+\tilde{\mathbf Q}^{(m)}_{\mathcal I})_{i_mj_m}|
\\&=|(R_mD_m+D_m^{\ast }R_m)_{i_mj_m}|
\le \frac{2|(\tilde{\mathbf Q}^{(m)}_{\mathcal I})_{i_mj_m}|\eta}
{| (\tilde{\mathbf Q}^{(m)}_{st})_{i_mi_m}
 -(\tilde{\mathbf Q}^{(m)}_{st})_{j_mj_m} | }
\\&=r_m\eta|(\tilde{\mathbf Q}^{(m)}_{\mathcal I})_{i_mj_m}|.
\end{align*}
Denote
$$K_1=\left \{\{ i_m,j_m \}  \right \}_{m=M_1}^{M_1+M_2-1},\quad K_2=\left \{\{ i,j \} |1\le i < j \le n \right \}\backslash K_1.$$
Then, we have
$$\underset{\left \{ i,j \right \} \in K_1}{\max} |(\tilde{\mathbf Q}^{(m+1)}_{\mathcal I})_{ij}|
\le (1+r_m\eta)\underset{\left \{ i,j \right \} \in K_1}{\max} |(\tilde{\mathbf Q}^{(m)}_{\mathcal I})_{ij}|.$$
Denote $K_3=\{ M_1, M_1+1, \ldots, M_1+M_2-1\}$, it holds
$$\max_{\substack{\left \{ i,j  \right \} \in K_1 \\ m\in K_3}} |(\tilde{\mathbf Q}^{(m)}_{\mathcal I})_{ij}|
\le \prod_{k=M_1}^{M_1+M_2-1} (1+r_k\eta)
\underset{\left \{ i,j \right \} \in K_1}{\max} |(\tilde{\mathbf Q}^{(M_1)}_{\mathcal I})_{ij}|.$$
Hence,
\begin{align}\label{eqs11}
|(\tilde{\mathbf Q}^{(M_1+M_2)}_{\mathcal I})_{i_mj_m}|
&\le \sum_{k=m+1}^{M_1+M_2-1} r_k\eta|(\tilde{\mathbf Q}^{(m)}
_{\mathcal I})_{i_kj_k}|
+|(\tilde{\mathbf Q}^{(m+1)}_{\mathcal I})_{i_mj_m}|
\nonumber\\&\le \sum_{k=M_1}^{M_1+M_2-1} r_k\eta|(\tilde{\mathbf Q}^{(m)}
_{\mathcal I})_{i_kj_k}|
\nonumber\\&\le \sum_{k=M_1}^{M_1+M_2-1} r_k\eta \max_{\substack{\left \{ i,j  \right \} \in K_1 \\ p\in K_3}} |(\tilde{\mathbf Q}^{(p)}_{\mathcal I})_{ij}|
\nonumber\\&\le \alpha \beta \eta\underset{\left \{ i,j \right \} \in K_1}{\max} |(\tilde{\mathbf Q}^{(M_1)}_{\mathcal I})_{ij}|.
\end{align}
Similarly, we have
\begin{equation}\label{eqs12}
\underset{ i}{\max} |
(\tilde{\mathbf Q}^{(M_1+M_2)}_{\mathcal I})_{ii}|
\le \alpha \beta \eta\underset{\left \{ i,j \right \} \in K_1}{\max}
|(\tilde{\mathbf Q}^{(M_1)}_{\mathcal I})_{ij}|
+\underset{i}{\max} |
(\tilde{\mathbf Q}^{(M_1)}_{\mathcal I})_{ii}|,
\end{equation}
and
\begin{equation}\label{eqs13}
\underset{\left \{ i,j \right \}\in K_2 }{\max} |
(\tilde{\mathbf Q}^{(M_1+M_2)}_{\mathcal I})_{ij}|
\le \alpha \beta \eta\underset{\left \{ i,j \right \} \in K_1}{\max}
|(\tilde{\mathbf Q}^{(M_1)}_{\mathcal I})_{ij}|
+\underset{\left \{ i,j \right \}\in K_2 }{\max} |
(\tilde{\mathbf Q}^{(M_1)}_{\mathcal I})_{ij}|.
\end{equation}
It follows from (\ref{eqs11}), (\ref{eqs12}), (\ref{eqs13}) and Corollary \ref{c3.2} that
\begin{equation}\label{eqs111}
\underset{\left \{ i,j \right \}\in K_1}{\max} |
(\tilde{\mathbf Q}^{(M_1+M_2)}_{\mathcal I})_{ij}|
\le\alpha \beta \eta\| \tilde{\mathbf Q}_{\mathcal I}\|_F,
\end{equation}
\begin{equation}\label{eqs112}\underset{ i}{\max} |(\tilde{\mathbf Q}^{(M_1+M_2)}_{\mathcal I})_{ii}|
\le(1+\alpha \beta \eta)\| \tilde{\mathbf Q}_{\mathcal I}\|_F,
\end{equation}
and
\begin{equation}\label{eqs113}
\underset{\left \{ i,j \right \}\in K_2 }{\max} |(\tilde{\mathbf Q}^{(M_1+M_2)}_{\mathcal I})_{ij}|
\le(1+\alpha \beta \eta)\| \tilde{\mathbf Q}_{\mathcal I}\|_F.
\end{equation}
According to Algorithm \ref{Jacobi_method_algorithm_3}, the number of iterations in STEP 2 is at most \begin{equation}\label{eqs1.20}T_2\doteq\frac{n^2}{2}.\end{equation}
Without loss of generality, suppose that $$\operatorname{diag}
(\tilde{\mathbf Q}^{(M_1+M_2)}_{st})=\operatorname{diag}(\lambda_1,\lambda_2,\cdots,\lambda_n ), $$ where
$$\lambda_1\approx \lambda_2\approx\cdots\approx\lambda_{t_1},$$ $$\lambda_{t_1+1}
\approx \lambda_{t_1+2}\approx\cdots\approx\lambda_{t_1+t_2},$$
$$\cdots$$
$$\lambda_{\sum_{i=1}^{p-1} t_i+1}\approx \lambda_{\sum_{i=1}^{p-1} t_i+2} \approx\cdots\approx\lambda_{\sum_{i=1}^{p}t_i}$$ and $\sum_{i=1}^{p}t_i=n$.
Based on STEP 2 in Algorithm \ref{Jacobi_method_algorithm_3}, for $1\le j_1<j_2\le t_i$, it holds $$|\lambda_{\sum_{k=1}^{i-1} t_k+j_1}-\lambda_{\sum_{k=1}^{i-1} t_k+j_2}|\le \gamma .$$
Denote $L=\hat{\mathbf J}_3$, then $L= \operatorname{diag}(L_1,L_2,\cdots,L_p) $, where $L_i \in \mathbb Q^{t_i\times t_i}$.
Suppose that $$\mu_i =\frac{1}{t_i} \sum_{j=I_1}^{I_2} \lambda_j,\quad I_1=\sum_{k=1}^{i-1} t_k+1,\quad I_2=\sum_{k=1}^{i} t_k,$$
denote $\mathbf Q_1=\operatorname{diag}(\mu_1 I_{t_1},\mu_2 I_{t_2},\cdots,\mu_p I_{t_p})$ and $ \mathbf Q_2=\tilde{\mathbf Q}_{st}^{(M_1+M_2)} -\mathbf Q_1$, then $|\mathbf Q_2|_{ii}\le \gamma $ and $|\mathbf Q_2|_{ij}\le \eta$ for $i\ne j$.
Thus, $$\tilde{\mathbf Q}_{st}^{(M)} =L^{\ast}\tilde{\mathbf Q}_{st}^{(M_1+M_2)}L
=L^{\ast}(\mathbf Q_1+\mathbf Q_2)L=\mathbf Q_1+L^{\ast}\mathbf Q_2L.$$ So,  for $i\ne j$, we have
$$|(\tilde{\mathbf Q}_{st}^{(M)})_{ij}|
\le|(L^{\ast}\mathbf Q_2L)_{ij}|\le \gamma +(h_1-1)\eta \le 3n\eta.$$
Due to (\ref{eqs113}) and similar to STEP 1, the number of iterations in STEP 3 is at most
\begin{equation}\label{eqt3}
T_3\doteq\frac{h_2(1+\alpha \beta \eta)^2\| \tilde{\mathbf Q}_{\mathcal I} \|_F^2}{2\delta_1^2} +\frac{h_2}{2\rho^2}\left \lceil \log_{\rho}{\frac{\eta}{\delta_1}}  \right \rceil,
\end{equation}
and $|\tilde{\mathbf Q}_{\mathcal I}^{(M)}|_{ij} \le \eta$ for $\left \{ i,j \right \}\in K_2$.

We rewrite $\tilde{\mathbf Q}_{\mathcal I}^{(M_1+M_2)}=(H^1_{ij})$ and $\tilde{\mathbf Q}_{\mathcal I}^{(M)}=(H^2_{ij})$, whose blocks are the same to $L$.
Since $\tilde{\mathbf Q}_{\mathcal I}^{(M)}=L^\ast \tilde{\mathbf Q}_{\mathcal I}^{(M_1+M_2)}L$, then $H^2_{ij}=L_i^\ast H^1_{ij} L_j$. Thus, for $\{ i,j \}\in K_1$, it holds
$$|(\tilde{\mathbf Q}_{\mathcal I}^{(M)})_{ij}| \le
h_1\underset{(i,j)\in K_1}{\max} |(\tilde{\mathbf Q}_{\mathcal I}^{(M_1+M_2)})_{ij}|,\quad
|(\tilde{\mathbf Q}_{\mathcal I}^{(M)})_{ii}| \le
h_1~\underset{ij}{\max}|(\tilde{\mathbf Q}_{\mathcal I}^{(M_1+M_2)})_{ij}|.$$


In summary, for $i\ne j$, we have $$|(\tilde{\mathbf Q}_{st}^{(M)})_{ij}| \le 3n\eta, \quad |(\tilde{\mathbf Q}_{\mathcal I}^{(M)})_{ij}| \le \max\left \{ \eta , h_1(\alpha \beta \eta) \| \tilde{\mathbf Q}_{\mathcal I}  \|_F \right \} .$$
For $i=j$, we have $$|(\tilde{\mathbf Q}_{\mathcal I}^{(M)})_{ii}|\le h_1(1+\alpha \beta \eta)\| \tilde{\mathbf Q}_{\mathcal I} \|_F=h_1\kappa\| \tilde{\mathbf Q}_{\mathcal I} \|_F.$$
Then, from (\ref{eqt1}), (\ref{eqs1.20}) and (\ref{eqt3}),  (\ref{wucha1}) and (\ref{wucha2}) hold after at most $T$ iterations, where
$$T=T_1+T_2+T_3=\frac{\| \tilde{\mathbf Q}_{st}\|_F^2}{2\delta^2} +
\frac{n^2}{2}\left ( 1+\frac{\left \lceil  \log_{\rho}{\frac{\eta}{\delta}}\right\rceil}{\rho^2} \right ) +
\frac{h_2}{2}\left ( \frac{\kappa^2
\| \tilde{\mathbf Q}_{\mathcal I} \|_F^2}{\delta_1^2}+\frac{\left \lceil \log_{\rho}{\frac{\eta}{\delta_1}}\right \rceil}{\rho^2}
 \right ).$$
\end{proof}

\begin{remark}\label{remark1} From (\ref{wucha1}) in Theorem \ref{theoreM_2}, it is easy to see that Algorithm \ref{Jacobi_method_algorithm_3} can output an $O(\eta)$-approximal diagonal dual number matrix $\hat{\mathbf Q}^{(M)}$ after at most $T$ iteration, whose diagonal elements are approximation of eigenvalues of $\hat{\mathbf Q}$.

Especially, STEP 2 can be repeated many times to accelerate the elimination process. Suppose that we repeat STEP 2 for $S$ times. Similar to the derivation processes of (\ref{eqs111}), (\ref{eqs112}) and (\ref{eqs113}), we can obtain
\begin{equation}\label{wuchas1}\underset{i\ne j}{\max}|(\tilde{\mathbf Q}_{st}^{(M)})_{ij}| \le 3n\eta,\quad \underset{i\ne j}{\max}|(\tilde{\mathbf Q}_{\mathcal I}^{(M)})_{ij}| \le \max\left \{ \eta ,h_1(\alpha \beta \eta)^S \| \tilde{\mathbf Q}_{\mathcal I}\|_F \right \},\end{equation} and
\begin{equation}\label{wuchas2}
\underset{i}{\max} |(\tilde{\mathbf Q}_{\mathcal I}^{(M)})_{ii}|\le h_1\kappa_S\| \tilde{\mathbf Q}_{\mathcal I}\|_F \end{equation}
after $T_S$ iterations, where
\begin{equation}\label{TS}
T_S=\frac{\| \tilde{\mathbf Q}_{st} \|_F^2}{2\delta^2} +
\frac{n^2}{2}\left ( S+\frac{\left \lceil  \log_{\rho}{\frac{\eta}{\delta}}\right\rceil}{\rho^2} \right ) +
\frac{h_2}{2}\left ( \frac{\kappa_S^2
\| \tilde{\mathbf Q}_{\mathcal I} \|_F^2}{\delta_1^2}+\frac{\left \lceil \log_{\rho}{\frac{\eta}{\delta_1}}\right \rceil}{\rho^2}
 \right )
 \end{equation}
 and
 $$
 \kappa_S=\frac{1-(\alpha\beta\eta)^{S+1}}{1-\alpha\beta\eta}.$$
\end{remark}

Under some mild assumptions, we can obtain more precise results than $O(\eta)$-approximation in Theorem \ref{theoreM_2}. We need the following assumption.
\begin{assumption}\label{A2}
Let $\hat{\mathbf Q}=\tilde{\mathbf Q}_{st} +\tilde{\mathbf Q}_{\mathcal I}\varepsilon\in\hat{ \mathbb{H}}^{n}$ be a dual quaternion Hermitian matrix.  We denote the eigenvalues of $\tilde{\mathbf Q}_{st}$ as $ \lambda _1>\lambda _2>\cdots > \lambda _p$ and the multiplicity of each eigenvalue $\lambda _i$ as $t_i$. Suppose that there exists a constant $c>0$ such that $\underset{i<j}{\min} (\lambda _i-\lambda _j)\ge c$.
\end{assumption}

\begin{corollary}\label{corollary3.7}
Let $\alpha,\beta$ be defined by (\ref{albe}). If $\hat{\mathbf Q}$ satisfies Assumption \ref{A2}, then
$$\alpha\le \frac{n(n-1)}{c-\gamma},\quad \beta\le(1+\frac{2\eta}{c-\gamma} )^{\frac{n(n-1)}{2}}.$$
Furthermore, suppose that $c=12n^2\eta$ in Assumption \ref{A2}, $\delta=\delta_1=1$ and $\rho^2=0.1$ in Algorithm \ref{Jacobi_method_algorithm_3}, STEP 2 is repeated $S$ times, where
$$S=\Big\lceil \log_{10}\frac{\| \tilde{\mathbf Q}_{\mathcal I}\|_F}{n\eta}\Big\rceil,$$
then $\alpha \beta \eta\le \frac{1}{10}$ and Algorithm \ref{Jacobi_method_algorithm_3} terminates after at most $T$ iterations with
$$\underset{i\ne j}{ \max}|(\tilde{\mathbf Q}_{st}^{(M)})_{ij}| \le 3n\eta,\quad \underset{i\ne j}{\max}|(\tilde{\mathbf Q}_{\mathcal I}^{(M)})_{ij}| \le  h_1n\eta,$$
and
$$\underset{ i}{\max} |(\tilde{\mathbf Q}^{(M)}_{\mathcal I})_{ii}|
\le 2h_1\| \tilde{\mathbf Q}_{\mathcal I} \|_F,$$
where $$T=\frac{1}{2}\| \tilde{\mathbf Q}_{st} \|_F^2+h_2\| \tilde{\mathbf Q}_{\mathcal I} \|_F^2+\frac{n^2}{2}\Big \lceil \log_{10}\frac{\| \tilde{\mathbf Q}_{\mathcal I} \|_F}{n\eta}\Big \rceil+10(n^2+h_2)\Big \lceil\log_{10}{\frac{1}{\eta}}\Big\rceil.$$
\end{corollary}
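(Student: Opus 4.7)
The plan is to first translate the definitions of $\alpha$ and $\beta$ in \eqref{albe} into quantitative bounds in terms of $c$, $\gamma$, and $\eta$, and then plug everything into the $S$-fold estimates \eqref{wuchas1}--\eqref{TS} from Remark \ref{remark1}. The pivotal step is a uniform upper bound $r_m\le 2/(c-\gamma)$ on the $r_m$ actually arising in STEP 2. Since STEP 1 terminates with $\max_{i\neq j}|(\tilde{\mathbf Q}^{(M_1)}_{st})_{ij}|\le \eta$, Proposition \ref{lemma0} applied to $\tilde{\mathbf Q}^{(M_1)}_{st}$ and its diagonal part yields a permutation $\sigma$ with $\sum_i(\mu_{\sigma(i)}-d_i)^2\le n(n-1)\eta^2$, where $d_k=(\tilde{\mathbf Q}^{(M_1)}_{st})_{kk}$ and $\mu_1,\ldots,\mu_n$ are the eigenvalues of $\tilde{\mathbf Q}_{st}$ listed with multiplicity. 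Cauchy--Schwarz restricted to any pair of indices then gives
\begin{equation*}
|\mu_{\sigma(i_m)}-d_{i_m}|+|\mu_{\sigma(j_m)}-d_{j_m}|\le \sqrt{2n(n-1)}\,\eta=\gamma,
\end{equation*}
so every pair actually processed by STEP 2 (for which $|d_{i_m}-d_{j_m}|>\gamma$) must have $\mu_{\sigma(i_m)}\neq\mu_{\sigma(j_m)}$, and Assumption \ref{A2} upgrades this to $|d_{i_m}-d_{j_m}|\ge c-\gamma$.

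Since the double loop in STEP 2 performs at most $M_2\le n(n-1)/2$ eliminations, the claimed bounds on $\alpha$ and $\beta$ are now immediate from \eqref{albe}. Substituting $c=12n^2\eta$ gives $c-\gamma\ge n\eta(12n-\sqrt 2)$, so $\alpha\eta\le (n-1)/(12n-\sqrt 2)\le 1/12$ and, using $(1+x)^k\le e^{kx}$, $\beta\le \exp((n-1)/(12n-\sqrt 2))\le e^{1/12}$. Therefore $\alpha\beta\eta\le e^{1/12}/12<1/10$, which is the exact hypothesis needed to invoke Remark \ref{remark1}. The choice $S=\lceil\log_{10}(\|\tilde{\mathbf Q}_{\mathcal I}\|_F/(n\eta))\rceil$ forces $(\alpha\beta\eta)^S\le 10^{-S}\le n\eta/\|\tilde{\mathbf Q}_{\mathcal I}\|_F$, so $h_1(\alpha\beta\eta)^S\|\tilde{\mathbf Q}_{\mathcal I}\|_F\le h_1 n\eta$ and the off-diagonal bound in \eqref{wuchas1} collapses to $\max_{i\neq j}|(\tilde{\mathbf Q}^{(M)}_{\mathcal I})_{ij}|\le h_1 n\eta$ (using $h_1 n\ge 1$), while the standard-part bound $3n\eta$ is inherited verbatim. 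For the diagonal part, the geometric series formula together with $\alpha\beta\eta\le 1/10$ yields $\kappa_S\le (1-1/10)^{-1}=10/9\le\sqrt 2$, so \eqref{wuchas2} gives $\max_i|(\tilde{\mathbf Q}^{(M)}_{\mathcal I})_{ii}|\le h_1\kappa_S\|\tilde{\mathbf Q}_{\mathcal I}\|_F\le 2h_1\|\tilde{\mathbf Q}_{\mathcal I}\|_F$.

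What remains is to simplify $T_S$ from \eqref{TS}. Plugging $\delta=\delta_1=1$, $\rho^2=0.1$, $\kappa_S^2\le 2$, and the identity $\log_\rho\eta=2\log_{10}(1/\eta)$ (so $\lceil\log_\rho\eta\rceil\le 2\lceil\log_{10}(1/\eta)\rceil$) into \eqref{TS} and combining the two $1/\rho^2$ terms gives exactly the stated expression for $T$. The main obstacle is the sharp Cauchy--Schwarz estimate yielding the constant $\gamma$ rather than a looser $\sqrt 2\,\gamma$ in the lower bound on $|d_{i_m}-d_{j_m}|$; without it, the requirement $\alpha\beta\eta\le 1/10$ would not follow from $c=12n^2\eta$ and the threshold $c$ in Assumption \ref{A2} would have to be enlarged. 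Everything else is careful bookkeeping: checking that $e^{1/12}/12<1/10$ numerically, that $S$ is just large enough to absorb $\|\tilde{\mathbf Q}_{\mathcal I}\|_F$, and that the ceiling-log arithmetic cleanly recovers the iteration count stated in the corollary.
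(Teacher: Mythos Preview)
Your proposal is correct and follows essentially the same route as the paper: Hoffman--Wielandt on $\tilde{\mathbf Q}^{(M_1)}_{st}$ versus its diagonal, then Cauchy--Schwarz on a pair to get $|d_{i_m}-d_{j_m}|\ge c-\gamma$ and hence $r_m\le 2/(c-\gamma)$, followed by substitution into \eqref{wuchas1}--\eqref{TS}. Your arithmetic for $\alpha\beta\eta$ ends at $e^{1/12}/12$ while the paper lands at $e^{1/11}/11$; both are below $1/10$, and your explicit justification that $\mu_{\sigma(i_m)}\neq\mu_{\sigma(j_m)}$ (via the STEP~2 threshold $\gamma$ matching the Cauchy--Schwarz constant) is a point the paper leaves implicit.
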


\begin{proof}
Without loss of generality, suppose that $$(\tilde{\mathbf Q}^{(M_1)}_{st})_{11}\ge(\tilde{\mathbf Q}^{(M_1)}_{st})_{22}\ge\cdots\ge(\tilde{\mathbf Q}^{(M_1)}_{st})_{nn}$$ and
$$\mu_1\ge\mu_2\ge\cdots\ge\mu_n$$ are eigenvalues of $\tilde{\mathbf Q}_{st}$.
Then, from (\ref{H-W}), we get
$$\sum_{i=1}^{n} (\mu_i-(\tilde{\mathbf Q}^{(M_1)}_{st})_{ii})^2\le n(n-1) \underset{i\ne j}{\max}| (\tilde{\mathbf Q}^{(M_1)}_{st})_{ij}| ^2\le n(n-1)\eta^2,$$
which, together with Assumption \ref{A2}, implies
\begin{align*}
|(\tilde{\mathbf Q}^{(M_1)}_{st})_{i_mi_m}-
(\tilde{\mathbf Q}^{(M_1)}_{st})_{j_mj_m}|
&\ge|\mu_{i_m}-\mu_{j_m}|-|(\tilde{\mathbf Q}^{(M_1)}_{st})_{i_mi_m}-
\mu_{i_m}|
\\&\qquad-|(\tilde{\mathbf Q}^{(M_1)}_{st})_{j_mj_m}-\mu_{j_m}|
\\&\ge c-\gamma .
\end{align*}
Hence, we get
$$r_m=\frac{2}{| (\tilde{\mathbf Q}^{(M_1)}_{st})_{i_mi_m}-
(\tilde{\mathbf Q}^{(M_1)}_{st})_{j_mj_m}|}\le \frac{2}{c-\gamma },$$
which yields $$\alpha=\sum_{k=M_1}^{M_1+M_2-1} r_k\le \frac{n(n-1)}{c-\gamma}$$ and $$\beta=\prod_{k=M_1}^{M_1+M_2-1} (1+r_k\eta)\le(1+\frac{2\eta}{c-\gamma} )^{\frac{n(n-1)}{2}}.$$

Furthermore, it follows from  $c=12n^2\eta$ that
\begin{align*}
\alpha\beta\eta &\le \frac{n(n-1)}{c-\gamma}(1+\frac{2\eta}{c-\gamma} )^{\frac{n(n-1)}{2}}\eta
\\&=\frac{n(n-1)\eta}{12n^2\eta-\sqrt{2n(n-1)}\eta}(1+\frac{2\eta}{12n^2\eta-\sqrt{2n(n-1)}\eta} )^{\frac{n(n-1)}{2}}
\\&\le \frac{n^2}{12n^2-2n}(1+\frac{2}{12n^2-2n})^{\frac{n(n-1)}{2}}
\\&\le \frac{1}{11}e^{\frac{n(n-1)}{12n^2-2n}}\le \frac{1}{11}e^{\frac{1}{11}}\le \frac{1}{10}.
\end{align*}

Since it sets $\delta=\delta_1=1$ and $\rho^2=0.1$ in Algorithm \ref{Jacobi_method_algorithm_3} and STEP 2 is repeated $S$ times, by (\ref{wuchas1}) and (\ref{wuchas2}) , we have
\begin{align*}
\underset{i\ne j}{\max}|(\tilde{\mathbf Q}_{\mathcal I}^{(M)})_{ij}|
&\le\max\left \{ \eta, h_1(\alpha \beta \eta)^S \| \tilde{\mathbf Q}_{\mathcal I} \|_F \right \}
\le \max\left \{ \eta, \frac{h_1 \| \tilde{\mathbf Q}_{\mathcal I}\|_F }{10^S} \right \}
\\&\le \max\left \{ \eta, h_1n\eta\right \}
= h_1n\eta,
\end{align*}
and
\begin{align*}
\underset{i}{\max}|(\tilde{\mathbf Q}_{\mathcal I}^{(M)})_{ii}|
&\le h_1\kappa_S \| \tilde{\mathbf Q}_{\mathcal I}  \|_F
\le h_1\frac{1}{1-\alpha \beta \eta} \| \tilde{\mathbf Q}_{\mathcal I} \|_F
\\&\le h_1\frac{1}{1-\frac{1}{10}} \|
\tilde{\mathbf Q}_{\mathcal I} \|_F
\le 2h_1\| \tilde{\mathbf Q}_{\mathcal I} \|_F.
\end{align*}

It follows from $\delta=\delta_1=1$ and $\rho^2=0.1$ that
$$\frac{\kappa_S^2}{2\delta_1^2}\le \frac{1}{2(1-\frac{1}{10})^2}\le 1,\quad
\frac{\Big \lceil  \log_{\rho}{\frac{\eta}{\delta}}\Big\rceil}{\rho^2} =
\frac{\Big \lceil  \log_{\rho}{\frac{\eta}{\delta_1}}\Big\rceil}{\rho^2}
=20\Big \lceil \log_{10}{\frac{1}{\eta}}\Big\rceil.$$
Hence, by (\ref{TS}),  the number of iterations is at most $$T=\frac{1}{2}\| \tilde{\mathbf Q}_{st} \|_F^2+h_2 \| \tilde{\mathbf Q}_{\mathcal I} \|_F^2+\frac{n^2}{2}\Big\lceil \log_{10}\frac{\| \tilde{\mathbf Q}_{\mathcal I}\|_F}{n\eta}\Big \rceil+10(n^2+h_2)\Big \lceil\log_{10}{\frac{1}{\eta}}\Big\rceil.$$
\end{proof}

We now discuss the case that the dual quaternion Hermitian matrix $\hat{\mathbf Q}$ has $n$ simple eigenvalues. In this case, by {\it Remark} \ref{remark1} and Corollary \ref{corollary3.7}, we  immediately obtain the following results.
\begin{corollary}\label{corollary3.8}
If $\hat{\mathbf Q}$ has $n$ simple eigenvalues,
Algorithm \ref{Jacobi_method_algorithm_3} terminates after at most
$$\frac{\| \tilde{\mathbf Q}_{st}\|_F^2}{2\delta^2}
+\frac{n^2}{2\rho^2}\left \lceil \log_{\rho}{\frac{\eta}{\delta}}\right\rceil+\frac{n^2S}{2}$$ iterations with
$$\underset{i\ne j}{ \max}|(\tilde{\mathbf Q}_{st}^{(M)})_{ij}| \le \eta,\quad \underset{i\ne j}{\max}|(\tilde{\mathbf Q}_{\mathcal I}^{(M)})_{ij}| \le \max\Big \{ \eta ,(\alpha \beta \eta)^S \| \tilde{\mathbf Q}_{\mathcal I}\|_F \Big \},$$
and
$$\underset{i}{\max} |(\tilde{\mathbf Q}_{\mathcal I}^{(M)})_{ii}|\le\kappa_S\| \tilde{\mathbf Q}_{\mathcal I}\|_F.$$

If $\hat{\mathbf Q}$ also satisfies the conditions given in Corollary \ref{corollary3.7}, then Algorithm \ref{Jacobi_method_algorithm_3} terminates after at most
\begin{equation}\label{eqT}T\doteq\frac{1}{2}\|\tilde{\mathbf Q}_{st}\|_F^2 +10n^2\Big\lceil \log_{10}{\frac{1}{\eta}}\Big\rceil+\frac{n^2}{2}\Big\lceil \log_{10}\frac{\| \tilde{\mathbf Q}_{\mathcal I} \|_F}{n\eta}\Big \rceil
\end{equation}
iterations with \begin{equation}\label{wucha}\underset{i\ne j}{ \max}|(\tilde{\mathbf Q}_{st}^{(M)})_{ij}| \le \eta,\quad \underset{i\ne j}{\max}|(\tilde{\mathbf Q}_{\mathcal I}^{(M)})_{ij}| \le n\eta,\quad \underset{ i}{\max} |(\tilde{\mathbf Q}^{(M)}_{\mathcal I})_{ii}|
\le 2\| \tilde{\mathbf Q}_{\mathcal I}\|_F.\end{equation}
\end{corollary}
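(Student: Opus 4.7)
The plan is to deduce Corollary \ref{corollary3.8} as a direct specialization of Theorem \ref{theoreM_2} (together with the iterated-STEP~2 extension in Remark \ref{remark1}) and of Corollary \ref{corollary3.7} to the setting $p=n$, $t_i=1$ for all $i$. Under this specialization one immediately gets $h_1=\max_i t_i=1$ and $h_2=\sum_i t_i^2=n$, so the main task is to plug these values into the bounds already derived and, crucially, to argue that STEP~3 of Algorithm \ref{Jacobi_method_algorithm_3} performs no inner iterations in the simple-eigenvalue regime.

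The first step I would take is the no-STEP~3 argument. After STEP~1 finishes, the off-diagonal entries of $\tilde{\mathbf Q}^{(M_1)}_{st}$ are bounded by $\eta$, so by Proposition \ref{lemma0} (Hoffman--Wielandt) the diagonal entries of $\tilde{\mathbf Q}^{(M_1)}_{st}$ approximate the eigenvalues $\mu_1>\cdots>\mu_n$ of $\tilde{\mathbf Q}_{st}$ to within $\sqrt{n(n-1)}\eta$. Setting $c=\min_{i\ne j}|\mu_i-\mu_j|>0$, the triangle inequality gives
\[
|(\tilde{\mathbf Q}^{(M_1)}_{st})_{ii}-(\tilde{\mathbf Q}^{(M_1)}_{st})_{jj}|\ \ge\ c-2\sqrt{n(n-1)}\eta,
\]
so as soon as $\eta$ is small enough that $c-2\sqrt{n(n-1)}\eta>\gamma=\sqrt{2n(n-1)}\eta$, no pair satisfies the STEP~3 admissibility condition; STEP~3 contributes $T_3=0$ iterations and leaves both parts of $\hat{\mathbf Q}^{(M)}$ unchanged. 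In particular, the standard part retains the STEP~1 bound $\max_{i\ne j}|(\tilde{\mathbf Q}_{st}^{(M)})_{ij}|\le\eta$ (STEP~2 preserves the standard part, as noted in the proof of Theorem \ref{theoreM_2}), which is the sharp bound $\eta$, not the $3n\eta$ of Theorem \ref{theoreM_2}.

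Next I would read off the remaining bounds of the first assertion from \eqref{wuchas1}--\eqref{wuchas2} in Remark \ref{remark1} by substituting $h_1=1$. This yields $\max_{i\ne j}|(\tilde{\mathbf Q}_{\mathcal I}^{(M)})_{ij}|\le\max\{\eta,(\alpha\beta\eta)^S\|\tilde{\mathbf Q}_{\mathcal I}\|_F\}$ and $\max_i|(\tilde{\mathbf Q}_{\mathcal I}^{(M)})_{ii}|\le\kappa_S\|\tilde{\mathbf Q}_{\mathcal I}\|_F$. The iteration count collapses to $T_1+S\,T_2=\|\tilde{\mathbf Q}_{st}\|_F^2/(2\delta^2)+(n^2/(2\rho^2))\lceil\log_\rho(\eta/\delta)\rceil+n^2 S/2$, exactly the claimed bound, since STEP~3 is trivial.

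For the second assertion I would invoke Corollary \ref{corollary3.7} verbatim under the additional hypotheses $c=12n^2\eta$, $\delta=\delta_1=1$, $\rho^2=0.1$, and $S=\lceil\log_{10}(\|\tilde{\mathbf Q}_{\mathcal I}\|_F/(n\eta))\rceil$. Note that $c=12n^2\eta$ is well above the $(2+\sqrt{2})\sqrt{n(n-1)}\eta$ threshold, so STEP~3 is still trivial and the $h_2$-dependent STEP~3 terms in the count and in the accuracy bound for the diagonal of the dual part both drop out. Substituting $h_1=1$ into the Corollary \ref{corollary3.7} accuracy bounds gives $\max_{i\ne j}|(\tilde{\mathbf Q}_{\mathcal I}^{(M)})_{ij}|\le n\eta$ and $\max_i|(\tilde{\mathbf Q}_{\mathcal I}^{(M)})_{ii}|\le 2\|\tilde{\mathbf Q}_{\mathcal I}\|_F$, and the iteration count reduces to \eqref{eqT} after discarding the vanishing STEP~3 terms and simplifying $10(n^2+h_2)\lceil\log_{10}(1/\eta)\rceil$ to $10n^2\lceil\log_{10}(1/\eta)\rceil$.

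The main obstacle, and really the only nontrivial part of the argument, is justifying that STEP~3 executes zero inner iterations under the simple-eigenvalue hypothesis; everything else is substitution of $h_1=1$, $h_2=n$ into bounds already established in Theorem \ref{theoreM_2}, Remark \ref{remark1}, and Corollary \ref{corollary3.7}.
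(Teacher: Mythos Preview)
Your proposal is correct and follows the same approach as the paper, which simply asserts that the corollary follows ``immediately'' from Remark~\ref{remark1} and Corollary~\ref{corollary3.7}. You have correctly identified and filled in the one step the paper leaves implicit, namely that when all eigenvalues of $\tilde{\mathbf Q}_{st}$ are simple (and $\eta$ is small relative to the minimum gap) the set $K_2$ is empty and STEP~3 performs no inner iterations, which both sharpens the standard-part bound from $3n\eta$ to $\eta$ and removes the $h_2$-dependent STEP~3 contributions from the iteration count.
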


The following theorem shows that if the dual quaternion Hermitian matrix $\hat{\mathbf Q}$ has $n$ simple eigenvalues then Algorithm \ref{Jacobi_method_algorithm_3} can provide their $\eta$-approximal solution after at most $T_S$ iterations.
\begin{theorem}\label{theorem3.9}
Suppose that $\hat{\mathbf Q}$ has $n$ simple eigenvalues $\hat{\lambda}_i=\lambda^s_i+\lambda^d_i\varepsilon\ (i=1,\ldots,n)$ and the conditions in Corollary \ref{corollary3.8} holds. Let $T$ be defined as  (\ref{eqT}) and $\eta>0$ be given accuracy parameter in Algorithm \ref{Jacobi_method_algorithm_3}. If $\underset{i\ne j}{\min}|\lambda^s_i-\lambda^s_j|\ge12n^2\eta^{1-\nu}$ where $\nu\in(0,1)$, then
Algorithm \ref{Jacobi_method_algorithm_3} can make sure that
$$
\max_{1\le i\le n}|\lambda^s_i- (\tilde{\mathbf Q}^{(M)}_{st})_{ii}|\le \sqrt{n(n-1)}\eta,\quad \max_{1\le i\le n}|\lambda^d_i- (\tilde{\mathbf Q}^{(M)}_{\mathcal I})_{ii}|\le \eta+\frac{\| \tilde{\mathbf Q}_{\mathcal I}\|_F}{n^2}\eta^\nu
$$
after at most $T$ iterations. That is, Algorithm \ref{Jacobi_method_algorithm_3} can provide $\varepsilon$-approximation with $\epsilon\doteq\eta^{\nu}$ for each eigenvalue $\hat{\lambda}_i\ (i=1,\ldots,n)$ in at most $T$ iterations.
\end{theorem}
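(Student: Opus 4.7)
The plan is to assemble the conclusion from three ingredients already in the paper: the exit bounds of Corollary~\ref{corollary3.8} on the entries of $\hat{\mathbf Q}^{(M)}$, the Hoffman--Wielandt inequality (Proposition~\ref{lemma0}) for the standard part, and the near-identity diagonalisation of the standard part supplied by Lemma~\ref{lemma5}, combined with the eigenvalue read-off of Lemma~\ref{lemma4} for the dual part.

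First I would verify that the hypothesis $\min_{i\neq j}|\lambda_i^s - \lambda_j^s|\ge 12n^2\eta^{1-\nu}$ is at least as strong as the separation requirement $c=12n^2\eta$ used in Corollary~\ref{corollary3.7} (since $\nu\in(0,1)$ and the interesting regime is $\eta<1$). Hence Corollary~\ref{corollary3.8} applies verbatim: Algorithm~\ref{Jacobi_method_algorithm_3} terminates in at most $T$ iterations and delivers the entrywise bounds~(\ref{wucha}), i.e.\ $\max_{i\ne j}|(\tilde{\mathbf Q}_{st}^{(M)})_{ij}|\le\eta$, $\max_{i\ne j}|(\tilde{\mathbf Q}_{\mathcal I}^{(M)})_{ij}|\le n\eta$ and $\max_i|(\tilde{\mathbf Q}^{(M)}_{\mathcal I})_{ii}|\le 2\|\tilde{\mathbf Q}_{\mathcal I}\|_F$. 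Applying Proposition~\ref{lemma0} to $\tilde{\mathbf Q}^{(M)}_{st}$ (whose eigenvalues coincide with $\{\lambda_i^s\}$ by unitary similarity) and $\operatorname{diag}(\tilde{\mathbf Q}^{(M)}_{st})$ and reordering the $\lambda_i^s$ to match the diagonal yields $\sum_i(\lambda_i^s-(\tilde{\mathbf Q}^{(M)}_{st})_{ii})^2 \le n(n-1)\eta^2$, which gives the advertised standard-part estimate.

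For the dual part, Lemma~\ref{lemma5} with $\delta=\eta$ and $c\ge 12n^2\eta^{1-\nu}\ge 2\sqrt{n(n-1)}\eta$ supplies a unitary quaternion matrix $\tilde{\mathbf V}$ such that $\tilde{\mathbf V}^*\tilde{\mathbf Q}^{(M)}_{st}\tilde{\mathbf V}=\operatorname{diag}(\lambda_1^s,\ldots,\lambda_n^s)$ with $\max_{i,j}|(\tilde{\mathbf I}_n-\tilde{\mathbf V})_{ij}|\le 2\eta/c \le \eta^\nu/(6n^2)$. The standard part of $\tilde{\mathbf V}^*\hat{\mathbf Q}^{(M)}\tilde{\mathbf V}$ is then diagonal with pairwise distinct entries, so Lemma~\ref{lemma4} identifies the dual-number eigenvalues as $\lambda_i^d=\tilde{\mathbf v}_i^*\tilde{\mathbf Q}^{(M)}_{\mathcal I}\tilde{\mathbf v}_i$, where $\tilde{\mathbf v}_i$ is the $i$-th column of $\tilde{\mathbf V}$. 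Writing $\tilde{\mathbf v}_i=e_i+\Delta_i$ with $|(\Delta_i)_j|\le \eta^\nu/(6n^2)$, I would estimate
\[
|\lambda_i^d-(\tilde{\mathbf Q}^{(M)}_{\mathcal I})_{ii}|\le 2\bigl|e_i^T\tilde{\mathbf Q}^{(M)}_{\mathcal I}\Delta_i\bigr|+\bigl|\Delta_i^*\tilde{\mathbf Q}^{(M)}_{\mathcal I}\Delta_i\bigr|,
\]
splitting the first term into the diagonal contribution $(\tilde{\mathbf Q}^{(M)}_{\mathcal I})_{ii}(\Delta_i)_i$, bounded via $\max_i|(\tilde{\mathbf Q}^{(M)}_{\mathcal I})_{ii}|\le 2\|\tilde{\mathbf Q}_{\mathcal I}\|_F$ to yield the term $\|\tilde{\mathbf Q}_{\mathcal I}\|_F\eta^\nu/n^2$, and the $j\ne i$ off-diagonal contributions, bounded via $|(\tilde{\mathbf Q}^{(M)}_{\mathcal I})_{ij}|\le n\eta$ to give an $O(\eta^{1+\nu})$ piece absorbed into the $\eta$ term; the quadratic form $\Delta_i^*\tilde{\mathbf Q}^{(M)}_{\mathcal I}\Delta_i$ is crudely $O(\eta^{2\nu}\|\tilde{\mathbf Q}_{\mathcal I}\|_F)$ and of strictly lower order.

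The main obstacle is exactly this last book-keeping step: the off-diagonal entries of $\tilde{\mathbf Q}^{(M)}_{\mathcal I}$ are small $(\le n\eta)$, but its diagonal entries are only controlled by the \emph{global} quantity $\|\tilde{\mathbf Q}_{\mathcal I}\|_F$, so the linear leakage $(\tilde{\mathbf Q}^{(M)}_{\mathcal I})_{ii}(\Delta_i)_i$ drives both the $\|\tilde{\mathbf Q}_{\mathcal I}\|_F\eta^\nu/n^2$ contribution in the claimed bound and, retroactively, the separation hypothesis $c\ge 12n^2\eta^{1-\nu}$: this separation is precisely what is needed to force $|(\Delta_i)_j|\le \eta^\nu/(6n^2)$ through Lemma~\ref{lemma5}. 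Once the constants line up, the $\epsilon$-approximation claim follows by setting $\epsilon=\eta^\nu$, since both the standard- and dual-part errors are then $O(\epsilon)$.
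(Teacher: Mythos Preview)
Your proposal is correct and follows essentially the same route as the paper's own proof: invoke Corollary~\ref{corollary3.8} for the exit bounds~(\ref{wucha}), use Hoffman--Wielandt for the standard part, then combine Lemma~\ref{lemma5} (with $c\ge 12n^2\eta^{1-\nu}$ giving $\max_{i,j}|(\tilde{\mathbf I}_n-\tilde{\mathbf V})_{ij}|\le\eta^\nu/(6n^2)$) and Lemma~\ref{lemma4} to write $\lambda_i^d=\tilde{\mathbf v}_i^*\tilde{\mathbf Q}^{(M)}_{\mathcal I}\tilde{\mathbf v}_i$ and expand around $e_i$. The paper organises the final estimate slightly differently---bounding $\sum_{j}|(\tilde{\mathbf Q}^{(M)}_{\mathcal I})_{ij}|$ and $\sum_{i,j}|(\tilde{\mathbf Q}^{(M)}_{\mathcal I})_{ij}|$ directly rather than splitting diagonal versus off-diagonal---but the ingredients and constants match yours.
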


\begin{proof}
Without loss of generality, suppose that $\lambda^s_1\ge\lambda^s_2\ge\cdots\ge\lambda^s_n$ and $$(\tilde{\mathbf Q}^{(M_1)}_{st})_{11}\ge(\tilde{\mathbf Q}^{(M_1)}_{st})_{22}\ge\cdots\ge(\tilde{\mathbf Q}^{(M_1)}_{st})_{nn}.$$
Since $\underset{i\ne j}{\min}|\lambda^s_i-\lambda^s_j|\ge12n^2\eta^{1-\nu}\ge 12n^2\eta$, by Corollary \ref{corollary3.8}, (\ref{wucha}) holds.   From the first inequality in (\ref{wucha}) and (\ref{H-W}), we get
$$\sum_{i=1}^{n} (\lambda^s_i-(\tilde{\mathbf Q}^{(M)}_{st})_{ii})^2\le n(n-1) \underset{i\ne j}{\max} | (\tilde{\mathbf Q}^{(M)}_{st})_{ij}  | ^2\le n(n-1)\eta^2,$$
which implies
\begin{equation}\label{chas}
\max_{1\le i\le n}|\lambda^s_i- (\tilde{\mathbf Q}^{(M)}_{st})_{ii}|\le \sqrt{n(n-1)}\eta=\big(\sqrt{n(n-1)}\eta^{1-\nu}\big)\epsilon.
\end{equation}

Since $\underset{i\ne j}{\min}|\lambda^s_i-\lambda^s_j|\ge12n^2\eta^{1-\nu}$, we have $c=12n^2\eta^{1-\nu}\ge 2\sqrt{n(n-1)}\eta$. Hence, by Lemma \ref{lemma5}, there exist a unitary quaternion matrix $\tilde{\mathbf V}$ which diagonalises $\tilde{\mathbf Q}_{st}^{\left ( M\right )}$, and
$$\underset{ij}{\max}|(\tilde{\mathbf{I}}_n - \tilde{\mathbf V})_{ij}|
\le \frac{\eta^\nu}{6n^2}.$$

Since $\tilde{\mathbf V}^*\hat{\mathbf Q}^{(M)}\tilde{\mathbf V}=\tilde{\mathbf V}^*\tilde{\mathbf Q}^{(M)}_{st}\tilde{\mathbf V}+\tilde{\mathbf V}^*\tilde{\mathbf Q}^{(M)}_{\mathcal I}\tilde{\mathbf V}$, where $\tilde{\mathbf V}^*\tilde{\mathbf Q}^{(M)}_{st}\tilde{\mathbf V}$ is a diagonal matrix, according to Lemma \ref{lemma4}, the diagonal elements of
$\tilde{\mathbf V}^*\hat{\mathbf Q}^{(M)}\tilde{\mathbf V}$ are the eigenvalues of $\hat{\mathbf Q}^{(M)}$, then $\lambda^d _i=\mathbf v_i^*\tilde{\mathbf Q}^{(M)}_{\mathcal I}\mathbf v_i$.
Hence, for $i=1,\ldots,n$ we have
\begin{align*}
|  \lambda^d _i- (\tilde{\mathbf Q}^{(M)}_{\mathcal I})_{ii} |
&= |\mathbf v_i^*\tilde{\mathbf Q}^{(M)}_{\mathcal I}\mathbf v_i
-(\tilde{\mathbf Q}^{(M)}_{\mathcal I})_{ii} |
\\&\le |(\mathbf v_i^*-e_i^T)\tilde{\mathbf Q}^{(M)}
_{\mathcal I}(\mathbf v_i-e_i)|+2 |e_i^T\tilde{\mathbf Q}^{(M)}
_{\mathcal I}(\mathbf v_i-e_i) |
\\&\le \frac{\eta^{2\nu}}{36n^4}\sum_{ij}^{}  |(\tilde{\mathbf Q}^{(M)}_{\mathcal I}) _{ij} |+\frac{\eta^\nu}{3n^2}\underset{i}{\max}\sum_{j}^{}  |(\tilde{\mathbf Q}^{(M)}_{\mathcal I}) _{ij}   |
\\&\le \frac{\eta^{2\nu}}{36n^4}(n^3\eta+2n \| \tilde{\mathbf Q}_{\mathcal I} \|_F )+\frac{\eta^\nu}{3n^2}(n^2\eta+2 \| \tilde{\mathbf Q}_{\mathcal I}  \|_F )
\\&=(\frac{\eta^{2\nu}}{36n^3}+\frac{\eta^\nu}{3n^2})(n^2\eta+2 \| \tilde{\mathbf Q}_{\mathcal I}  \|_F )
\\&\le \eta+\frac{ \| \tilde{\mathbf Q}_{\mathcal I} \|_F}{n^2}\eta^\nu,
\end{align*}
which implies
\begin{equation}\label{chad}
\max_{1\le i\le n}|\lambda^d_i- (\tilde{\mathbf Q}^{(M)}_{\mathcal I})_{ii}|\le \eta+\frac{\| \tilde{\mathbf Q}_{\mathcal I}\|_F}{n^2}\eta^\nu=\Big(\eta^{1-\nu}+\frac{\| \tilde{\mathbf Q}_{\mathcal I}\|_F}{n^2}\Big)\epsilon.
\end{equation}
Clearly, it shows from (\ref{chas}) and (\ref{chad}) that
Algorithm \ref{Jacobi_method_algorithm_3} can provide $\epsilon$-approximation for each eigenvalue $\hat{\lambda}_i\ (i=1,\ldots,n)$ in at most $T$ iterations.
\end{proof}

\section{Numerical Experiments}\label{Experiments}

In this section, we apply Algorithm \ref{Jacobi_method_algorithm1}, Algorithm \ref{Jacobi_method_algorithm2}, Algorithm \ref{Jacobi_method_algorithm_3} to compute eigenvalues for a given dual quaternion Hermitian matrix $\hat{\mathbf Q}\in \hat{\mathbb H}^{n}$. We compare the performance of Algorithm \ref{Jacobi_method_algorithm_3} with that of the power method \cite{qc4} and Rayleigh quotient iteration (RQI) method \cite{qc17} to show the efficiency of our algorithm.

Suppose that $\{ \hat{\mathbf Q}^{(t)}= \tilde{\mathbf Q}_{st}^{(t)}+\tilde{\mathbf Q}^{(t)}_{\mathcal I}\varepsilon\}$ is the generated sequence. Let $D=\| \hat{\mathbf Q}\|_{F^R}$. We use
$$R^t\doteq\frac{1}{D} \left (\sum_{i\ne j}^{}(| (\tilde{\mathbf Q}^{(t)}_{st})_{ij}
|^2 +  | (\tilde{\mathbf Q}^{(t)}_{\mathcal I})_{ij} |^2 )   \right ) ^\frac{1}{2}$$ to
verify whether the non-diagonal elements tend to zero. Denote
\begin{equation*}
    e_{\lambda }^t\doteq\frac{1}{n}\sum_{i=1}^{n}\| \hat{\mathbf Q}\hat{\mathbf u}_{i}^{(t)}-\hat{\lambda }_{i}^{(t)}\hat{\mathbf u}_{i}^{(t)} \|_{2^{R}},
\end{equation*}
where $\hat{\lambda }_{i}^{(t)}$ and $\hat{\mathbf u}_{i}^{(t)}$  are eigenvalues and eigenvectors generated by these algorithms at $t$-th iteration. We use $e_{\lambda }^t$ to verify the accuracy of the output of these algorithms.  All numerical experiments are conducted in MATLAB (2022a) on a laptop of 8G of memory and Inter Core i5 2.3Ghz CPU. For Algorithm \ref{Jacobi_method_algorithm1}
we set accuracy parameter $\epsilon=1e-7$. For Algorithm \ref{Jacobi_method_algorithm2} we set parameter $\delta=1$, $\eta=1e-7$ and $\rho^2=0.1$. For Algorithm \ref{Jacobi_method_algorithm_3} we set parameter $\delta=\delta_1=1$, $\eta=1e-7$, $\rho^2=0.1$ and we repeat STEP 2 for 2 times, i.e., we set $S=2$.

\subsection{Performance of Algorithm \ref{Jacobi_method_algorithm1} and Algorithm \ref{Jacobi_method_algorithm2}}
We first show the sequence generated by Algorithm \ref{Jacobi_method_algorithm1} and Algorithm \ref{Jacobi_method_algorithm2} converges to a diagonal dual number matrix. The numerical performance is  consistent with the theoretical results obtained in Theorems \ref{thmst} and \ref{thmd}.

We randomly generate a dual quaternion Hermitian matrix $\hat{\mathbf Q}$ with dimension $n=30$, and use Algorithm \ref{Jacobi_method_algorithm1} and Algorithm \ref{Jacobi_method_algorithm2} to compute the eigenvalues and eigenvectors. Denote $\{ R^t_1\}_{t=1}^{N_1}$, $\{ e_{\lambda ,1}^t \}_{t=1}^{N_1} $ generated by Algorithm \ref{Jacobi_method_algorithm1}  and $\{ R^t_2 \}_{t=1}^{N_2}$, $\{ e_{\lambda,2}^t \}_{t=1}^{N_2}$ generated by Algorithm \ref{Jacobi_method_algorithm2}.  After $N_1=1778$ and $N_2=1966$ iterations respectively, Algorithm \ref{Jacobi_method_algorithm1} and Algorithm \ref{Jacobi_method_algorithm2} terminate and output
$$e_{\lambda ,1}^{N_1}=3.65\times 10^{-6},\quad e_{\lambda ,2}^{N_2}=2.58\times 10^{-6},$$ $$R^{N_1}_1=6.31\times 10^{-6},\quad R^{N_2}_2=5.31\times 10^{-7}.$$ This shows that both Algorithm \ref{Jacobi_method_algorithm1} and Algorithm \ref{Jacobi_method_algorithm2} can accurately compute eigenvalues and eigenvectors for dual quaternion Hermitian matrices, and provide a diagonal dual number matrix after finite iterations (see Figure \ref{fig1}). The elapsed CPU time (sec.) for Algorithm \ref{Jacobi_method_algorithm1} and Algorithm \ref{Jacobi_method_algorithm2} is $0.8555s$ and $0.3450s$, respectively. This indicates that the accelerative strategy  used in Algorithm \ref{Jacobi_method_algorithm2} can help to reduce the computing time.
\begin{figure}[h]
    \centering
    \includegraphics[width=0.6\linewidth]{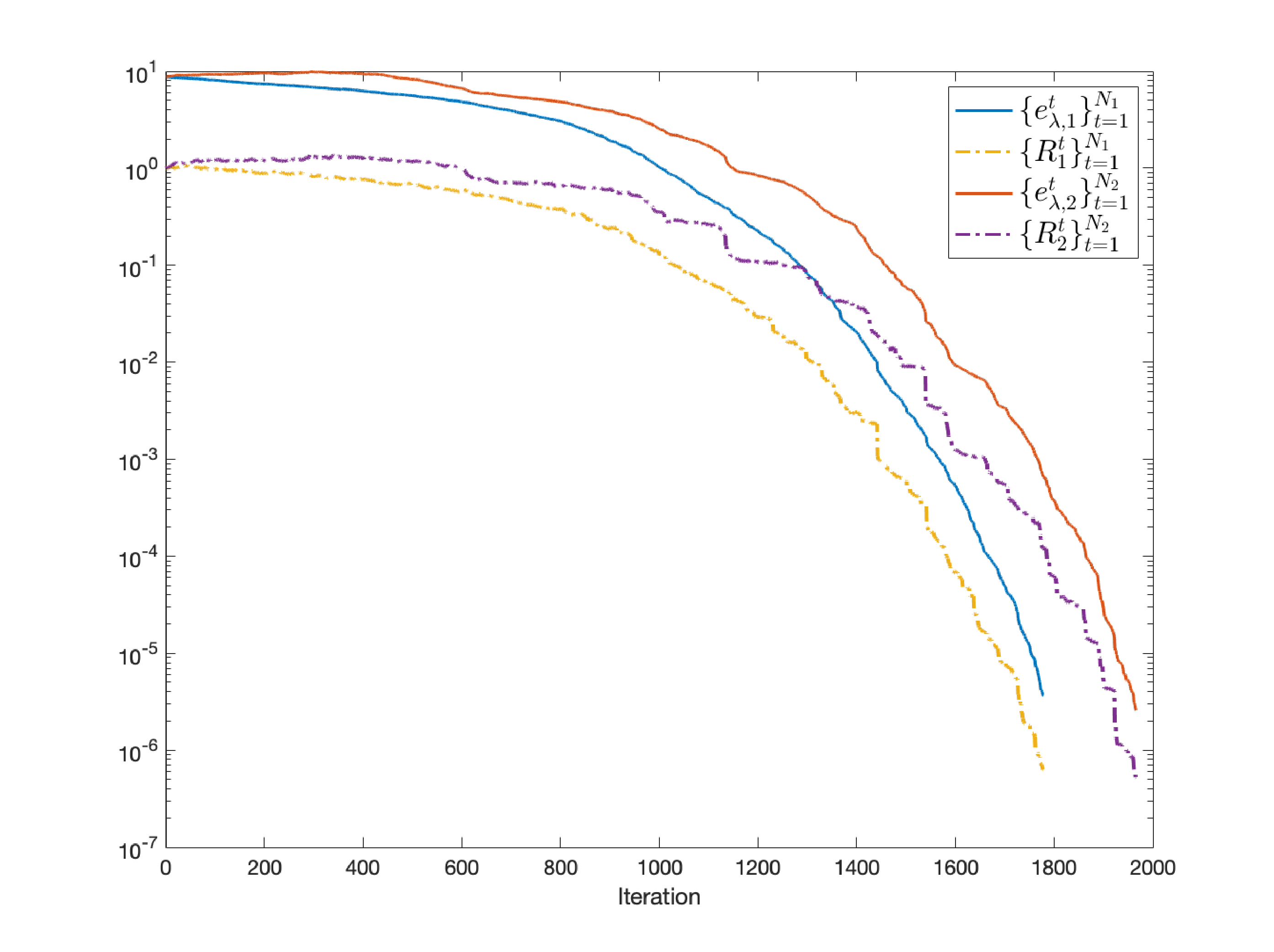} \vspace{-3mm}
    \caption{\small Numerical performance of Algorithm \ref{Jacobi_method_algorithm1} and Algorithm \ref{Jacobi_method_algorithm2}.}\label{fig1}
    \end{figure}

\subsection{Performance of  Algorithm \ref{Jacobi_method_algorithm_3}} We next to show the performance of Algorithm \ref{Jacobi_method_algorithm_3}. We generate a dual quaternion Hermitian matrix $\hat{\mathbf{P}}=(\hat{p}_{ij})$ as the following way. Let $\hat{\mathbf{q}}=(\hat{q}_i)$ be a random unit dual quaternion vector defined by
$$\small{
\setlength{\arraycolsep}{1.2pt}
\left.\hat{\mathbf{q}}=\left[\begin{array}{rrrr}0.9359&0.3033&0.0112&-0.1785\\-0.6476&0.3307&0.6751&-0.1249\\-0.7964&-0.4063&0.4446&0.0542\\-0.4627&-0.3857&-0.7755&-0.1891\\-0.4083&-0.4844&-0.7025&-0.3243\end{array}\right.\right]+
\left[\begin{array}{rrrr}0.0739&-0.9213&-1.0193&-1.2419\\-0.2448&-0.0200&-0.3720&-0.7944\\
   -0.3142&0.0313&-0.5714&0.3056\\0.2159&-0.5179&0.1159&0.0530\\
   -0.1260&0.1389&0.0662&-0.1923\end{array}\right]\varepsilon}.$$
Let $E=\left \{  \left \{ 1,2 \right \},\left \{ 2,3 \right \},\left \{ 3,4\right \},\left \{ 4,5 \right \},\left \{ 5,1 \right \}\right \}$ and set $\hat{p}_{ij}$ as
\begin{equation*}
\hat{p}_{ij}=
\begin{cases}
 \hat{q}_{i}^{\ast } \hat{q}_{j}, & \text{if}~ ~\left \{ i,j \right \} \in E ,\\
i\varepsilon , & \text{if}~ ~ i=j ,\\
0, & \text{otherwise.} \\
\end{cases}
\end{equation*}

We compute all eigenvalues of $\hat{\mathbf{P}}$ by Algorithm \ref{Jacobi_method_algorithm_3}. Algorithm \ref{Jacobi_method_algorithm_3} terminates with $e_{\lambda }^N=1.5341\times 10^{-8}$ and $R^N=3.1167\times 10^{-9}$, the elapsed CPU time is $0.0092s$ and all five eigenvalues of $\hat{\mathbf{P}}$ are found
as follows:
{\footnotesize
\begin{equation}\label{eig1}
2.0000+3.0000\varepsilon,\ 0.6180+3.5257\varepsilon,\ 0.6180+2.4743\varepsilon,\ -1.6180+3.8507\varepsilon,\ -1.6180+2.1493\varepsilon.
\end{equation}}
This shows that  Algorithm \ref{Jacobi_method_algorithm_3} computes all eigenvalues of $\hat{\mathbf{P}}$ accurately and quickly. By (\ref{eig1}), it is easy to see that $\hat{\mathbf{P}}$ has two eigenvalues with the same standard parts and different dual parts.  Algorithm \ref{Jacobi_method_algorithm_3} can compute it accurately and quickly, but both the power method \cite{qc4} and RQI method \cite{qc17} are no longer valid and cannot compute them. This shows the advantages of Algorithm \ref{Jacobi_method_algorithm_3}.

\subsection{On randomly generated dual quaternion Hermitian matrices}

We consider to compute the eigenvalues of random dual quaternion Hermitian matrices by Algorithm \ref{Jacobi_method_algorithm_3}. We randomly generate dual quaternion Hermitian matrices with dimension $n=10,50,100,150,200$ and repeat 50 times. All numerical results are summarized in Table \ref{table1}, in which $n_{iter}$ is the average number of iterations, {CPU time} $(s)$ is the average elapsed CPU time in seconds, $\sigma_T$ is the standard deviation of the elapsed CPU time and $\sigma_N$ is the standard deviation of the number of iterations.
\begin{table}[!h]
\renewcommand{\arraystretch}{1.2}

    \centering
    \caption{\small{Numerical results of Algorithm \ref{Jacobi_method_algorithm_3} for computing all eigenvalues of randomly generated  dual quaternion Hermitian matrices with different dimension}}\vspace{-0.2cm}\label{table1}
    \resizebox{9.5cm}{1.3cm}{
    \begin{tabular}{|c|c|c|c|c|c|c|}
        \hline
        $n$ & $e_{\lambda }^N$ & $R^N$  & CPU time $(s)$ & $n_{iter}$ & $\sigma_T$ & $\sigma_N$ \\
        \cline{1-7}
        10 & 8.81e-8 & 1.30e-8 & 1.45e-2  & 2.70e+2 & 4.82e-3 & 4.19 \\
        \cline{1-7}
        50 & 7.03e-7 &  2.59e-8 & 4.34e-1  & 8.12e+3 & 6.10e-2 & 30.79\\
        \cline{1-7}
        100 & 2.36e-6 & 4.00e-8 & 2.09 & 3.36e+4 & 3.73e-1 & 73.48 \\
        \cline{1-7}
        150 & 5.21e-6 & 6.62e-8 & 6.04 & 7.68e+4 & 4.98e-1 &  124.45 \\
        \cline{1-7}
        200 & 9.18e-6 & 8.87e-8 & 11.38 & 1.38e+5 & 7.18e-1 & 170.53\\
        \cline{1-7}
    \end{tabular}}
\end{table}

From Table \ref{table1}, we see that average elapsed CPU time for computing all eigenvalues of randomly generated dual quaternion Hermitian matrices with dimension $n=100$ is about $2s$. When $n=200$, the average elapsed CPU time for computing all eigenvalues is $11.38s$ and the error $e_{\lambda}^N$ is $9.18\times 10^{-6}$. These numerical results show that Algorithm \ref{Jacobi_method_algorithm_3} is effective for compute all eigenvalues of dual quaternion Hermitian matrices. Moreover, from the last two columns of Table \ref{table1}, we clearly find that the running time and the number of iterations for Algorithm \ref{Jacobi_method_algorithm_3} remain stable across different dual quaternion Hermitian matrices.

\subsection{On Laplacian matrices of randomly generated graphs} In this subsection, we use Algorithm \ref{Jacobi_method_algorithm_3} to compute all eigenvalues for Laplacian matrices of  randomly generated graphs.

Given a graph $G=(V, E)$ with $n$ vertices, its Laplacian matrix $\hat{\mathbf L}$ is
defined as
\begin{equation*}
    \hat{\mathbf L}=\hat{\mathbf D}-\hat{\mathbf A},
\end{equation*}
where $\hat{\mathbf D}$ is a diagonal real matrix whose diagonal elements equal to the
degrees of the corresponding vertices, and $\hat{\mathbf A}=\left ( \hat{a}_{ij}\right )$ is given as
\begin{equation*}
     \hat{ a}_{ij}=
    \begin{cases}
    \hat{ q}_{i}^{\ast } \hat{ q}_{j}, & \text{if} ~ \left ( i,j\right )\in E ,\\
    0, & \text{otherwise},
\end{cases}
\end{equation*}
where $\hat{\mathbf q}\in \hat{\mathbb{U}}^{n\times 1}$ is known in advance.

Assume that $G$ is a sparse graph and $E$ is symmetric with sparsity $s=|E|/n^{2}$, where $|E|$ is the number of elements in $E$. In practice, we randomly generate a graph with $s/2 \times n^{2}$ edges. We use the power method \cite{qc4}, RQI  method \cite{qc17}, and Algorithm \ref{Jacobi_method_algorithm_3} (3SJacobi) to compute all eigenvalues and eigenvectors of $\hat{\mathbf L}$ with $n=10$ or $100$ and different values of $s$. All results are repeated ten times with different choices of $\hat{\mathbf q}$ and different $E$ and the average numerical results are reported in Table \ref{table2}

\begin{table}[!h]
\renewcommand{\arraystretch}{1.2}
    \centering
    \caption{\small{ Numerical results of power method, RQI method and 3SJacobi for computing all eigenvalues of Laplacian matrices of random graphs with different sparsity}}\vspace{-0.2cm}\label{table2}
    \resizebox{11cm}{3cm}{
    \begin{tabular}{|c|c|c||c|c|c||c|c|c|}
        \hline
        \multicolumn{3}{|c||}{Power method}& \multicolumn{3}{c||}{RQI  method} & \multicolumn{3}{c|}{Algorithm \ref{Jacobi_method_algorithm_3}} \\
        \cline{1-9}
        \multicolumn{9}{|c|}{n=10}\\
        \cline{1-9}
        s & $e_{\lambda }^N$ & CPU time $(s)$ & s & $e_{\lambda }^N$ & CPU time $(s)$  &  s & $e_{\lambda }^N$ & CPU time $(s)$ \\
        \cline{1-9}
        0.1 & 1.34e-10 &  1.09e-1 &  0.1 & 2.59e-7 &  7.72e-2 &  0.1 & 1.06e-8 & 1.88e-2 \\
        \cline{1-9}
        0.2 &  4.99e-10 &  5.90e-1 &  0.2 & 5.86e-7 & 8.10e-2  &  0.2 & 3.27e-8 & 2.07e-2 \\
        \cline{1-9}
        0.3 & 6.94e-10 &  7.20e-1 & 0.3 & 3.89e-7 & 8.86e-2 & 0.3 & 4.07e-8 & 1.86e-2  \\
        \cline{1-9}
        0.4 & 9.69e-10 & 8.19e-1 & 0.4 & 5.22e-7 &  9.10e-2 &  0.4 & 5.97e-8 & 1.82e-2 \\
        \cline{1-9}
        0.5 &  1.21e-9 & 1.08 & 0.5 & 3.61e-7 &  9.11e-2 &  0.5 & 5.90e-8 & 2.16e-2 \\
        \cline{1-9}
        0.6 &  1.57e-9 &  1.37  & 0.6 &  3.37e-7 &  9.45e-2 & 0.6 &  5.05e-8 & 2.26e-2 \\
        \cline{1-9}
        \multicolumn{9}{|c|}{n=100}\\
        \cline{1-9}
        0.05 & 3.48e-6 &  46.21 & 0.05 & 6.00e-6 &  7.74 & 0.05 & 2.74e-7 & 1.66 \\
        \cline{1-9}
        0.08 &  3.09e-5 &  62.18 & 0.08 & 5.74e-6 &  8.16 &  0.08 & 3.26e-7 & 1.69 \\
        \cline{1-9}
        0.10 & 8.19e-5 &  72.13 & 0.10 & 4.72e-6 &  8.33 &  0.10 & 3.97e-7 & 1.73 \\
        \cline{1-9}
        0.15 & 7.46e-5 &  87.18 & 0.15 & 4.37e-6 &  8.20 &  0.15 & 5.96e-7 &  1.74 \\
        \cline{1-9}
        0.18 & 2.61e-4 & 101.4 & 0.18 & 3.97e-6 &  8.42 &  0.18 & 7.38e-7 & 1.82 \\
        \cline{1-9}
        0.2 &  1.84e-4 & 111.6  & 0.2 &  3.16e-6 & 9.12  &  0.2 &  8.64e-7 & 1.80 \\
        \cline{1-9}
    \end{tabular}}
\end{table}

Since RQI method requires a proper initial iteration point, we propose a further improvement on the basis of RQI method introduced in \cite{qc17}.
Prior to initiating the iteration process of RQI method, we incorporate a pre-processing step wherein a certain number of iterations of the power method are performed. This pre-processing step facilitates the acquisition of a more reasonable initial iteration point, thereby improving the efficiency of the Rayleigh quotient iteration method in practical.

Table \ref{table2} shows that Algorithm \ref{Jacobi_method_algorithm_3} (3SJacobi) outperforms power method \cite{qc4} and RQI  method \cite{qc17}. Algorithm \ref{Jacobi_method_algorithm_3} is very efficient for computing all eigenvalues of dual quaternion matrices in large scale. Especially, it has absolute advantage on the computation speed comparison basis.

\section{Final Remarks}\label{summery}

We propose three Jacobi-type methods to compute eigenvalues of dual quaternion Hermitian matrices and provide convergence analyses. Algorithm \ref{Jacobi_method_algorithm1} is a generalization of Jacobi eigenvalue method, which is shown that it can provide $\epsilon$-approximation of eigenvalues in finite iterations (see Theorems \ref{thmst} and \ref{thmd}). Algorithm \ref{Jacobi_method_algorithm2} improves Algorithm \ref{Jacobi_method_algorithm1} via adopting a new elimination strategy to reduce the computation cost (see Figure \ref{fig1}).  Since the existing method such as power method \cite{qc4} and Rayleigh quotient iteration method \cite{qc17} fail to  tackle the situation where a dual quaternion Hermitian matrix has two eigenvalues with identical standard parts but different dual parts, we propose a three-step Jacobi eigenvalue algorithm (Algorithm \ref{Jacobi_method_algorithm_3}), which effectively overcomes this question and compensates for the limitations of the existing methods (see(\ref{eig1})). We also show that Algorithm \ref{Jacobi_method_algorithm_3} can output an $O(\eta)$-approximal diagonal dual number matrix $\hat{\mathbf Q}^{(M)}$ after at most finite iterations, whose diagonal elements are the desired eigenvalues (see Theorem \ref{theoreM_2}). Moreover, it is shown that Algorithm \ref{Jacobi_method_algorithm_3} can provide $\epsilon$-approximation of  eigenvalues in finite iterations (see Theorem \ref{theorem3.9}) under some mild assumptions. Numerical results indicate that our 3SJacobi method (Algorithm \ref{Jacobi_method_algorithm_3}) outperforms the existing method such as power method \cite{qc4} and Rayleigh quotient iteration method \cite{qc17} (see Table \ref{table2}). Especially,  our 3SJacobi method has absolute advantage on the computation speed and can effectively compute all eigenvalues and eigenvectors for dual quaternion Hermitian matrices.

\bibliographystyle{amsplain}

\end{document}